\theoremstyle{plain}
\newtheorem{thm}{\protect\theoremname}[section]
\theoremstyle{definition}
\newtheorem{example}[thm]{\protect\examplename}
\theoremstyle{remark}
\newtheorem{rem}[thm]{\protect\remarkname}
\theoremstyle{plain}
\newtheorem{lem}[thm]{\protect\lemmaname}
\journal{JDE}
\providecommand{\examplename}{Example}
\providecommand{\lemmaname}{Lemma}
\providecommand{\remarkname}{Remark}
\providecommand{\theoremname}{Theorem}
\def\ps@pprintTitle{%
  \let\@oddhead\@empty
  \let\@evenhead\@empty
  \def\@oddfoot{\reset@font\hfil\thepage\hfil}
  \let\@evenfoot\@oddfoot
}
\begin{document}
\begin{frontmatter}{}

\title{A class of third order partial differential equations describing
spherical or pseudospherical surfaces}

\author[rvt]{Diego Catalano Ferraioli\fnref{fn1}\corref{cor1}}

\ead{diego.catalano@ufba.br}

\author[focal]{Tarc\' isio Castro Silva\fnref{fn2}}

\fntext[fn1]{Partially supported by CNPq, grant 422906/2016-6.}

\fntext[fn2]{Partially supported by FEMAT, grant 01/2022 and by CNPq, grant 422906/2016-6.}

\cortext[cor1]{Corresponding author}

\address[rvt]{Department of Mathematics, Universidade Federal da Bahia, Campus
de Ondina, Av. Adhemar de Barros, S/N, Ondina - CEP 40.170.110 - Salvador,
BA - Brazil, e-mail: diego.catalano@ufba.br.}

\address[focal]{Department of Mathematics, Universidade de Brasilia, Brasilia DF
70910-900, Brazil, e-mail: tarcisio@mat.unb.br. }
\begin{abstract}
Third order equations, which describe spherical surfaces (ss) or pseudospherical surfaces
(pss), of the form 
\[
\nu\,z_{t}-\lambda\,z_{xxt}=A(z,z_{x},z_{xx})\,z_{xxx}+B(z,z_{x},z_{xx})
\]
with $\nu$, $\lambda$ $\in$ $\mathbb{R}$, $\nu^2+\lambda^2\neq 0$, are considered. These equations are equivalent to the structure
equations of a metric with Gaussian curvature $K=1$ or $K=-1$, respectively.
Alternatively they can be seen as the compatibility condition of an
associated $\mathfrak{su}(2)$-valued or $\mathfrak{sl}(2,\mathbb{R})$-valued
linear problem, also referred to as a zero curvature representation.
Under certain assumptions we obtain an explicit classification for
equations of the considered form that describe ss or pss, in terms
of some arbitrary differentiable functions. Several examples of such
equations, which describe also a number of already known equations,
are provided by suitably choosing the arbitrary functions. 
\end{abstract}
\begin{keyword}
third order partial differential equations \sep pseudospherical surfaces
\sep spherical surfaces \sep quasilinear partial differential equations
\MSC[2010] 35G20, 
 35Q35, 
 35Q51, 
 47J35, 
 53B20 
 
\end{keyword}
\end{frontmatter}{}

\section{Introduction}

Nonlinear partial differential equations describing spherical surfaces
(\textbf{ss}) or pseudospherical surfaces (\textbf{pss}) are characterized
by the fact that their generic solutions provide metrics, on nonempty
open subset of $\mathbb{R}^{2}$, with Gaussian curvature $K=1$ or
$K=-1$, respectively. 

The first appearance in the literature of an equation that describes
\textbf{pss} was in 1862, with a paper \citep{EB} by Edmund Bour
who was the first to find that, in Darboux asymptotic coordinates,
the Gauss-Mainardi-Codazzi equations of a pseudospherical surface
reduce to the equation $z_{xt}=\sin\,z$, now better known as the
sine-Gordon (SG) equation. Subsequently, this equation aroused considerable
interest, due to the advent of B\"acklund's theory of transformations
and the results obtained by Bianchi on a nonlinear superposition principle
for its solutions. However the general theory of equations describing
\textbf{pss} was born with a paper by S.S. Chern and K. Tenenblat
\citep{CT}, which was motivated by the early observation \citep{Sas}
that \textquotedblleft all the soliton equations in $1+1$ dimensions
that can be solved by the AKNS $2\times2$ inverse scattering method
(for example, the sine-Gordon, KdV or modified KdV equations) ...
describe pseudospherical surfaces\textquotedblright . Thus began a
systematic study of these equations which allowed not only to better
understand their general properties, but also to obtain various classification
results. Subsequently in \citep{DT}, in analogy with \citep{CT},
the general notion of equation describing \textbf{ss} was also introduced.
For an overview of past researches on equations describing \textbf{ss}
and \textbf{pss} the reader is referred to \citep{BT,BRT,CamT,Tarcisio-Keti,Tarcisio-Niky,Catalano-Tarcisio-Keti,Catalano-Silva,Diego-Luis,FT,CaT,CT1,G,JT,KT,KelKeti,R,RT,T}
and also \citep{Re0,Re1,Re2,Re3,Re4}.

In this paper, we are interested in studying third order quasilinear
partial differential equations of the form

\begin{equation}
\nu\,z_{t}-\lambda\,z_{xxt}=A(z,z_{x},z_{xx})\,z_{xxx}+B(z,z_{x},z_{xx})\label{eq_z_x..}
\end{equation}
with $\nu,\,\lambda\in\mathbb{R}$, $\nu^{2}+\lambda^{2}\neq0$ and
$A^{2}+B^{2}\neq0$, that describe spherical or pseudospherical surfaces.

To this end, depending on whether $\nu\neq0$ or $\nu=0$, equations
\eqref{eq_z_x..} will be distinguished into two classes: 

\begin{description}
\item [$\bullet$] $\quad$ $z_{t}-\lambda\,z_{xxt}=A(z,z_{x},z_{xx})\,z_{xxx}+B(z,z_{x},z_{xx})$,
with $\lambda\in\mathbb{R}$ and $A^{2}+B^{2}\neq0$; 
\item [$\bullet$] $\quad$ $z_{xxt}=A(z,z_{x},z_{xx})\,z_{xxx}+B(z,z_{x},z_{xx})$,
with $A^{2}+B^{2}\neq0$. 
\end{description}
Indeed, one can always reduce an equation of the form \eqref{eq_z_x..}
to one of the form \textbf{(a)} or \textbf{(b)} by suitably rearranging
arbitrary functions $A,B$ and the constant $\lambda$.

Then in order to deal with classification problem we will assume the
following auxiliary conditions: 
\begin{description}
\item [{(i)}] $f_{11}=\eta$ is constant; 
\item [{(ii)}] $f_{21}$ and $f_{31}$ are linear non homogeneous functions
of $z$ and $z_{xx}$ with constant coefficients. 
\end{description}
The paper is organized as follow. In Section \ref{Prelim}, we collect
some preliminaries on differential equations that describe pseudospherical
or spherical surfaces. Then in Section \ref{sec3} we state our main
results and provide some explicit examples. The complete proofs are
given in Sections \ref{sec4} and \ref{sec4*}.

\medskip{}

\section{Preliminaries}

\label{Prelim}

For the reader's convenience we collect here some basic facts, and
notations used throughout the paper, from the theory of equations
describing pseudospherical or spherical surfaces.

If $\left(M,\,g\right)$ is a 2-dimensional Riemannian manifold and
$\left\{ \omega_{1},\omega_{2}\right\} $ is a co-frame, dual to an
orthonormal frame $\left\{ e_{1},e_{2}\right\} $, then $g=\omega_{1}^{2}+\omega_{2}^{2}$
and $\omega_{i}$ satisfy the structure equations: $d\omega_{1}=\omega_{3}\wedge\omega_{2}$
and $d\omega_{2}=\omega_{1}\wedge\omega_{3}$, where $\omega_{3}$
denotes the connection form defined as $\omega_{3}(e_{i})=d\omega_{i}(e_{1},e_{2})$.
The Gaussian curvature of $M$ is the function $K$ such that $d\omega_{3}=-K\omega_{1}\wedge\omega_{2}$.

Now, a $k$-th order differential equation $\mathcal{E}$, for a scalar
or vector real-valued function $z\left(x,t\right)$, \emph{describes
pseudospherical surfaces }\textbf{(pss)}\emph{, or spherical surfaces
}\textbf{(ss)}, if it is equivalent to the structure equations of
a surface with Gaussian curvature $K=-\delta$, with $\delta=1$ or
$\delta=-1$, respectively, i.e., 
\begin{equation}
\begin{array}{l}
d\omega_{1}=\omega_{3}\wedge\omega_{2},\quad d\omega_{2}=\omega_{1}\wedge\omega_{3},\quad d\omega_{3}=\delta\omega_{1}\wedge\omega_{2},\end{array}\label{struttura}
\end{equation}
where $\left\{ \omega_{1},\omega_{2},\omega_{3}\right\} $ are $1$-forms
\begin{equation}
\begin{array}{l}
\omega_{1}=f_{11}dx+f_{12}dt,\quad\omega_{2}=f_{21}dx+f_{22}dt,\quad\omega_{3}=f_{31}dx+f_{32}dt,\end{array}\label{eq:forms}
\end{equation}

\noindent such that $\omega_{1}\wedge\omega_{2}\neq0$, i.e., 
\begin{equation}
f_{11}f_{22}-f_{12}f_{21}\neq0,\label{eq:nondeg_cond}
\end{equation}
and $f_{ij}$ are functions of $x$, $t$, $z(x,t)$ and derivatives
of $z(x,t)$ with respect to $x$ and $t$.

Notice that, according to the definition, given a solution $z(x,t)$
of a \textbf{pss} (or \textbf{ss}) equation $\mathcal{E}$, we consider
an open connected set $U\subset\mathbb{R}^{2}$, contained in the
domain of $z(x,t)$, where the restriction of $\omega_{1}\wedge\omega_{2}$
to $z$ is everywhere nonzero on $U$. Such an open set $U$ exists
for generic solutions $z$. Thus, for generic solutions $z$ of a
\textbf{pss} (or \textbf{ss}) $\mathcal{E}$, the restriction $I[z]$
of $I=\omega_{1}^{2}+\omega_{2}^{2}$ to $z$ defines a Riemannian
metric $I[z]$, on $U$, with Gaussian curvature $K=-\delta$. It
is in this sense that one can say that a \textbf{pss} (\textbf{ss},
resp.) describes, \textbf{pss} (\textbf{ss}, resp.). This is an intrinsic
geometric property of a Riemannian metric (not immersed in an ambient
space).

A classical example of equation describing \textbf{pss} is the sine-Gordon
(SG) equation $z_{xt}=sin\left(z\right)$, which corresponds to 
\[
\begin{array}{l}
\omega_{1}=\frac{1}{\eta}sin\left(z\right)\,dt,\qquad\omega_{2}=\eta\,dx+\frac{1}{\eta}cos\left(z\right)\,dt,\qquad\omega_{3}=z_{x}\,dx,\end{array}
\]
with $\eta\in\mathbb{R}-\{0\}$ \citep{CT}. Observe that the solution
$z(x,t)\equiv0$ is not generic, in the sense that there is no open
set $U$, where $\omega_{1}\wedge\omega_{2}\neq0$, since $\omega_{1}(z)\equiv0$.

On the other hand, a well known example of equation describing \textbf{ss}
is the nonlinear Schrodinger system $NLS^{+}$ 
\[
\left\{ \begin{array}{l}
u_{t}+v_{xx}+2\left(u^{2}+v^{2}\right)u=0,\vspace{4pt}\\
-v_{t}+u_{xx}+2\left(u^{2}+v^{2}\right)v=0,
\end{array}\right.
\]
for the vector-valued function $\mathbf{z}(x,t)=(u(x,t),v(x,t))$.
This equation corresponds to 
\[
\begin{array}{l}
\omega_{1}=2vdx+\left(-4\eta v+2u_{x}\right)dt,\vspace{4pt}\\
\omega_{2}=2\eta dx+\left(-4\eta^{2}+2\left(u^{2}+v^{2}\right)\right)dt,\vspace{4pt}\\
\omega_{3}=-2udx+\left(2\eta u+2v_{x}\right)dt,
\end{array}
\]
with $\eta\in\mathbb{R}$ \citep{DT}.

An interesting property of equations describing {\bf pss} is that one may obtain an infinite hierarchy of conservation laws
for such an equation, as a consequence of the following geometric
properties. Assume that a differential equation ${\cal E}$ for $z(x,t)$
describes pseudospherical surfaces, with associated 1-forms $\omega_{1},\,\omega_{2},\,\omega_{3}$,
then $z$ is a solution of ${\cal E}$ if and only if $\omega_{3}-d\rho\,\omega_{1}+\sin\rho\,\omega_{1}+\cos\rho\,\omega_{2}=0$
is completely integrable for $\rho$. For each solution $z$ of ${\cal E}$
and corresponding solution $\rho$, the 1-form $\cos\rho\,\omega_{1}-\sin\rho\,\omega_{2}$
is closed (see the proof of Proposition 4.2 in \citep{CT}). Whenever
the 1-forms $\omega_{i}$, $i=1,2,3$ are analytic in a parameter
$\eta$, then $\rho$ is also analytic on the parameter and hence
the conservation laws may be obtained by the closed form written as
a series in $\eta$. Analogous properties hold for equations
describing {\bf ss}.

Equations which describe \textbf{pss}, or \textbf{ss}, can also be
characterized in few alternative ways. For instance, the system of
equations (\ref{struttura}) is equivalent to the integrability condition
\begin{equation}
d\Omega-\Omega\wedge\Omega=0,\label{eq:ZCR_omega}
\end{equation}
of the linear system 
\begin{equation}
dV=\Omega V,\label{struttura-2}
\end{equation}
for an auxiliary differentiable function $V=(v^{1},v^{2})^{T}$, with
$v^{i}=v^{i}\left(x,t\right)$, where $\Omega$ is either the $\mathfrak{sl}\left(2,\mathbb{R}\right)$-valued
1-form 
\[
\Omega=\frac{1}{2}\left(\begin{array}{cc}
\omega_{2} & \omega_{1}-\omega_{3}\\
\omega_{1}+\omega_{3} & -\omega_{2}
\end{array}\right),\qquad\text{when \;}\delta=1,
\]
or the $\mathfrak{su}\left(2\right)$-valued 1-form 
\[
\Omega=\frac{1}{2}\left(\begin{array}{cc}
i\omega_{2} & \omega_{1}+i\omega_{3}\\
-\omega_{1}+i\omega_{3} & -i\omega_{2}
\end{array}\right),\qquad\text{when \;}\delta=-1.
\]

\noindent Hence, for any solution $z=z\left(x,t\right)$ of $\mathcal{E}$,
defined on a domain $U\subset\mathbb{R}^{2}$, $\Omega$ is a Maurer-Cartan
form defining a flat connection on a trivial principal $SL\left(2,\mathbb{R}\right)$-bundle,
or $SU\left(2\right)$-bundle, over $\mathcal{U}$ (see for instance
\citep{DUB-KOM,SHARPE}).

Moreover, by using the matrices $X$ and $T$ such that $\Omega=Xdx+Tdt$
and $V:=\left(v^{1},v^{2}\right)^{T}$, (\ref{struttura-2}) can be
written as the linear problem 
\begin{equation}
\frac{\partial V}{\partial x}=XV,\quad\frac{\partial V}{\partial t}=TV.\label{eq:Prob_lin}
\end{equation}

\noindent It is easy to show that equations (\ref{struttura}) (or
(\ref{eq:ZCR_omega})) are equivalent to the integrability condition
of (\ref{eq:Prob_lin}), namely 
\begin{equation}
D_{t}X-D_{x}T+\left[X,T\right]=0,\label{eq:ZCR}
\end{equation}
where $D_{t}$ and $D_{x}$ are the total derivative operators with
respect to $t$ and $x$, respectively.

In the literature \citep{CraPirRob} 1-form $\Omega$, and sometimes
the pair $\left(X,T\right)$ or even (\ref{eq:ZCR}), is referred
to as an $\mathfrak{sl}\left(2,\mathbb{R}\right)$-valued, or $\mathfrak{su}\left(2\right)$-valued\emph{,
zero-curvature representation }(ZCR) for the equation $\mathcal{E}$.
Moreover, the linear system (\ref{struttura-2}) or (\ref{eq:Prob_lin})
is usually referred to as \emph{the linear problem associated to $\mathcal{E}$.
} It is this linear problem that, in some cases, is used in the construction
of explicit solutions of equations describing \textbf{pss}, or \textbf{ss},
by means of inverse scattering method \citep{AKNS,Beals,Beal-Coif,BRT,GGKM}.

We notice here that, given an $\mathfrak{sl}\left(2,\mathbb{R}\right)$-valued ZCR $(X,T)$ for
an equation $\mathcal{E}$ describing {\bf pss} under a \textit{gauge transformation} $X\rightarrow X^{S}=S\,X\,S^{-1}+D_{x}S\,S^{-1}$, $T\rightarrow T^{S}=S\,T\,S^{-1}+D_{t}S\,S^{-1}$ by an $SL(2,\mathbb{R})$-valued smooth function $S$, the left hand side of (\ref{eq:ZCR}) transforms
to 
\[
D_{t}X-D_{x}T+\left[X,T\right]=S\,\left(D_{t}X^{S}-D_{x}T^{S}+\left[X^{S},T^{S}\right]\right)\,S^{-1},
\]
hence $(X^{S},T^{S})$ is another ZCR for $\mathcal{E}$. Analogously, given an $\mathfrak{su}\left(2\right)$-valued ZCR $(X,T)$ for an equation $\mathcal{E}$ describing {\bf ss},
the pair $(X^{S},T^{S})$ defined by an $SU\left(2\right)$-valued
smooth function $S$ is another ZCR for $\mathcal{E}$.

\section{Main results and some examples}

\label{sec3}

From now on, in order to simplify notations, $z_{1}$, $z_{2}$, ...
will denote partial derivatives $z_{x}$, $z_{xx}$, ... of $z$ with
respect to $x$. Accordingly, equation (\ref{eq_z_x..}) will be rewritten
as 
\begin{equation}
\nu\,z_{t}-\lambda\,z_{2t}=A(z,z_{1},z_{2})\,z_{3}+B(z,z_{1},z_{2})\label{eq}
\end{equation}
where $z_{1t}$, $z_{2t}$, ... denote partial derivatives of $z_{1}$,
$z_{2}$, ... with respect to $t$, and condition \textbf{(ii)} becomes
\textquotedbl$f_{21}$ and $f_{31}$ are linear non homogeneous functions
of $z$ and $z_{2}$ with constant coefficients\textquotedbl{} .

\begin{figure}[h!]
\centering{}\caption{Classification scheme}
\includegraphics[scale=0.45]{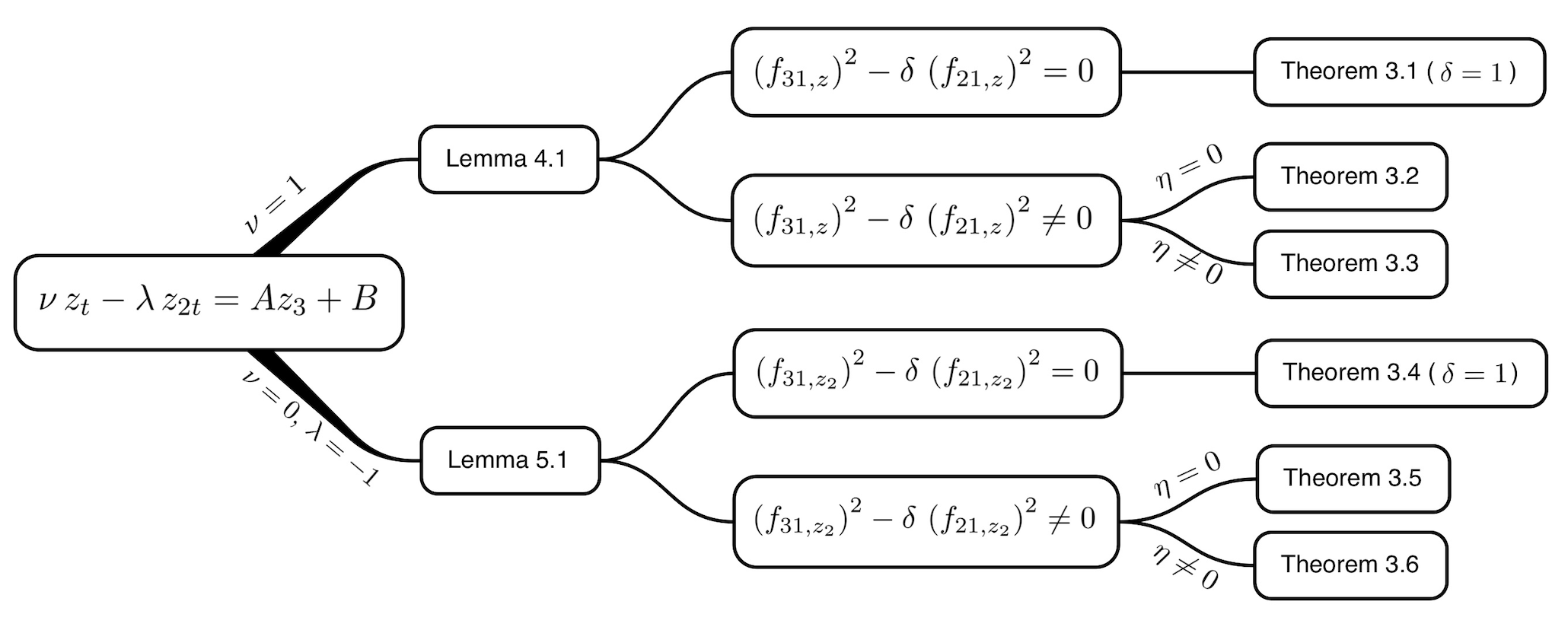} \label{fig:Diagrama1} 
\end{figure}

As already observed in the introduction, depending on whether $\nu\neq0$
or $\nu=0$, these equations will be distinguished into two classes: 
\begin{description}
\item [{(a)}] $\qquad$$z_{t}-\lambda\,z_{2t}=A(z,z_{1},z_{2})\,z_{3}+B(z,z_{1},z_{2})$,
with $\lambda\in\mathbb{R}$ and $A^{2}+B^{2}\neq0$; 
\item [{(b)}] $\qquad$$z_{2t}=A(z,z_{1},z_{2})\,z_{3}+B(z,z_{1},z_{2})$,
with $A^{2}+B^{2}\neq0$. 
\end{description}
The scheme followed in the paper for the classification of equations
(\ref{eq}) that describe \textbf{pss} or \textbf{ss} ($\delta=1$
or $\delta=-1,$ respectively), with corresponding functions $f_{ij}$
satisfying conditions \textbf{(i-ii)}, is summarized in Figure 1.

For the sake of simplicity, since the form of equation (\ref{eq})
is left invariant by linear coordinate transformations 
\[
x\mapsto ax,\quad t\mapsto bt,\quad z\mapsto c\,z+c_{0},\qquad a\,b\,c\neq0,\qquad a,b,c,c_{0}\in\mathbb{R}
\]
that also preserve conditions $\left(f_{31,z}\right)^{2}-\delta\left(f_{21,z}\right)^{2}=0$,
$\left(f_{31,z}\right)^{2}-\delta\left(f_{21,z}\right)^{2}\neq0$
and the functional dependence of $f_{i1}$ under $z$ and $z_{2}$
described by \textbf{(i-ii),} the classification will be given only
up to such coordinate transformations. This choice will simplify noteworthy
the results of the classification, without loss of generality.

In this section we only state the main results of the paper and further
illustrate them by means of some simple examples. The complete proofs
are postponed to Sections \ref{sec4} and \ref{sec4*}.

\subsection{Equations of the form $z_{t}-\lambda\,z_{2t}=A\,z_{3}+B$\label{Sec2.2}}

We collect here our main results on equations of the form 
\begin{equation}
z_{t}-\lambda\,z_{2t}=A\,z_{3}+B\label{eq:C-1}
\end{equation}
that describe \textbf{pss} or \textbf{ss} ($\delta=1$ or $\delta=-1,$
respectively), with corresponding functions $f_{ij}=f_{ij}(z,z_{1},...,z_{k})$
satisfying \textbf{(i-ii).}

The first theorem describes the case when $\{\left(f_{31,z}\right)^{2}-\delta\left(f_{21,z}\right)^{2}=0\}$,
whereas the next two describe the cases $\{\left(f_{31,z}\right)^{2}-\delta\left(f_{21,z}\right)^{2}\neq0,\;\eta=0\}$
and $\{\left(f_{31,z}\right)^{2}-\delta\left(f_{21,z}\right)^{2}\neq0,\;\eta\neq0\}$,
respectively. \medskip{}

\begin{thm}
\label{teo.clas.3.2} A partial differential equation of the form
\eqref{eq:C-1} describes \textbf{pss} or \textbf{ss} ($\delta=1$
or $\delta=-1,$ respectively), with corresponding functions $f_{ij}=f_{ij}(z,z_{1},...,z_{k})$
satisfying \textbf{(i-ii)} with $\left(f_{31,z}\right)^{2}-\delta\left(f_{21,z}\right)^{2}=0$,
if and only if it can be written in one of the following two forms:\\

\noindent \textbf{(I)} 
\[
z_{t}-\lambda\,z_{2t}=D_{x}\psi+a\,\psi+b\,(z-\lambda\,z_{2})
\]

\noindent with $\delta=1$\textbf{ }and associated 1-forms 
\[
\begin{array}{l}
\omega_{1}=\mp\left(a\,dx-b\,dt\right)\,,\vspace{5pt}\\
\omega_{2}=\left(z-\lambda z_{2}\right)\,dx+\psi\,dt\,,\vspace{5pt}\\
\omega_{3}=\pm\,\omega_{2}\,,
\end{array}
\]
where $\psi=\psi(z,z_{1},z_{2})$ is a real differentiable function
and $a,b\in\{0,1\}$, with $a^{2}+b^{2}\neq0$ and $a\,\psi+b\,(z-\lambda\,z_{2})\neq0$;

\vspace*{0.4cm}

\noindent \textbf{(II)} 
\[
z_{t}-\lambda\,z_{2t}=\mp D_{x}^{3}h-D_{x}\left(\left(z-\lambda z_{2}\right)h\right)-\left(z-\lambda z_{2}\right)\,D_{x}\left(h\right)
\]

\noindent with $\delta=1$\textbf{ }and associated 1-forms{\small{}{}{}{}{}{}{}
\[
\begin{array}{l}
\omega_{1}=\eta\,dx+\left(\mp D_{x}h-\eta\,h\right)\,dt\,,\vspace{5pt}\\
\omega_{2}={\displaystyle \frac{1}{\alpha}}\left[\left(z-\lambda z_{2}\pm{\displaystyle \frac{\eta^{2}}{2}}\mp{\displaystyle \frac{\alpha^{2}}{2}}\right)\,dx+\left(\mp D_{x}^{2}h-\eta\,D_{x}h-\left(z-\lambda z_{2}\pm{\displaystyle \frac{\eta^{2}}{2}}\mp{\displaystyle \frac{\alpha^{2}}{2}}\right)h\right)\,dt\right]\,,\vspace{5pt}\\
\omega_{3}=\pm\,\omega_{2}+\alpha\,(dx-h\,dt),
\end{array}
\]
} where $h=h(z)$ is a real differentiable function satisfying $h'\neq0$
and $\eta,\alpha\in\mathbb{R}$, $\alpha\neq0$. 
\end{thm}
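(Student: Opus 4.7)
The plan is to unpack the three structure equations (\ref{struttura}) as polynomial identities in the jet variables, by writing $d\omega_i=(D_x f_{i2}-D_t f_{i1})\,dx\wedge dt$ and using (\ref{eq:C-1}) to eliminate $z_t$ in favor of $z_{2t}$ and spatial derivatives. Hypotheses \textbf{(i)--(ii)} fix $f_{11}=\eta$ and $f_{i1}=p_i+q_i z+r_i z_2$ for $i=2,3$ with constants $p_i,q_i,r_i$, so the sign condition $(f_{31,z})^2-\delta(f_{21,z})^2=0$ reads $q_3^2=\delta q_2^2$.

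First I would dispose of the subcase $\delta=-1$, in which $q_2=q_3=0$. Then $D_t f_{21}=r_2 z_{2t}$ and $D_t f_{31}=r_3 z_{2t}$ are the only $z_{2t}$-dependent pieces of the corresponding structure equations, and under the standing convention that the $f_{ij}$ depend only on $x,t$ and the spatial jet of $z$, matching $z_{2t}$-coefficients forces $r_2=r_3=0$. All three $f_{i1}$ are then constants, so (\ref{struttura}) reduces to a linear ODE system in $x$ for the triple $(f_{12},f_{22},f_{32})$ that imposes no constraint on $z$; since the right-hand side of (\ref{eq:C-1}) with $A^2+B^2\neq 0$ is a genuine constraint, this subcase produces no admissible equation, explaining why both (I) and (II) carry $\delta=1$.

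With $\delta=1$, write $\epsilon=\pm 1$ so that $q_3=\epsilon q_2$, and introduce the auxiliary combination $\tilde\omega:=\omega_3-\epsilon\omega_2$. Its $dx$-coefficient $(p_3-\epsilon p_2)+(r_3-\epsilon r_2)z_2$ is $z$-free, and a further $z_{2t}$-balance in the linear combination $d\omega_3-\epsilon\,d\omega_2$ yields $r_3=\epsilon r_2$, so $\tilde\omega=\alpha\,dx+\phi\,dt$ with $\alpha:=p_3-\epsilon p_2\in\mathbb{R}$ constant. This produces the natural dichotomy $\alpha=0$ (Case (I), where a companion argument forces $\phi\equiv 0$ so that $\omega_3=\epsilon\omega_2$ identically) versus $\alpha\neq 0$ (Case (II), where the remaining structure equations pin $\phi=-\alpha h$ for an unknown $h$). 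In each subcase the admissible change $x\mapsto ax,\ t\mapsto bt,\ z\mapsto cz+c_0$ normalizes $f_{21}$ to $z-\lambda z_2$ in (I) or to $\tfrac{1}{\alpha}(z-\lambda z_2\pm\tfrac{\eta^2}{2}\mp\tfrac{\alpha^2}{2})$ in (II). The first structure equation then delivers (\ref{eq:C-1}) by identifying $A\,z_3+B$ with $x$-total-derivative combinations of $\psi$ (in (I)) or of $h$ and its derivatives (in (II)); the dependence $h=h(z)$ alone with $h'\neq 0$ emerges from matching coefficients of $z_1,z_2,z_3$ in the third structure equation.

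The main obstacle is bounding a priori the jet order of the unknowns $f_{i2}$: one needs to show that $f_{22}$ and $f_{32}$ depend only on $z,z_1,z_2$, so that the analysis closes on finitely many monomials. This I would achieve by induction on jet order, since any highest-order contribution in $f_{i2}$ produces an unmatched term in $D_x f_{i2}$ that cannot be cancelled by the right-hand sides of (\ref{struttura}). Once this bound is secured, the rest is a finite polynomial computation in $z_3, z_{2t}$ and lower-order monomials, and the converse direction, namely that each equation of the form (I) or (II) is actually described by the displayed 1-forms with $\delta=1$, is a direct verification of (\ref{struttura}).
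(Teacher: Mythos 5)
Your plan is essentially the route the paper takes: its Lemma \ref{lema1} performs exactly your jet-order reduction (equating coefficients of the independent coordinate $2$-forms gives $f_{i2,z_{s}}=0$ for $s\geq3$, then $f_{12,z_{2}}=0$ and $f_{p2,z_{2}}=\sigma_{p}A$); the condition $\sigma_{3}^{2}=\delta\sigma_{2}^{2}$ combined with $\sigma_{2}^{2}+\sigma_{3}^{2}\neq0$ excludes $\delta=-1$ just as in your first step; and your dichotomy on the constant $\alpha=p_{3}-\epsilon p_{2}$ is precisely the paper's split into $\alpha_{3}\mp\alpha_{2}=0$ versus $\alpha_{3}\mp\alpha_{2}\neq0$, with your $\tilde\omega=\omega_{3}-\epsilon\omega_{2}$ playing the role of the paper's relation $f_{32}=\pm f_{22}-\varphi/\sigma_{2}$. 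Two slips should be corrected before executing the plan. First, since $f_{11}=\eta$ is constant, the structure equation $d\omega_{1}=\omega_{3}\wedge\omega_{2}$ contains no $t$-derivative of $z$ and reduces to the pure constraint $D_{x}f_{12}+f_{32}f_{21}-f_{31}f_{22}=0$ among the $f_{ij}$; the evolution equation \eqref{eq:C-1} is instead extracted from the equations for $d\omega_{2}$ and $d\omega_{3}$, where $D_{t}f_{p1}=\sigma_{p}\left(z_{t}-\lambda z_{2t}\right)$ is what gets replaced by $\sigma_{p}\left(Az_{3}+B\right)$ — so the step ``the first structure equation then delivers \eqref{eq:C-1}'' would fail as written. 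Second, $h'\neq0$ in case (II) does not come from coefficient matching in the structure equations but from the nondegeneracy requirement $\omega_{1}\wedge\omega_{2}\neq0$ (inequality \eqref{7.5.1}); likewise, in case (I) you still need the residual constraint to force $f_{12}$ to be constant, which is what makes $\omega_{1}$ the constant-coefficient form $\mp\left(a\,dx-b\,dt\right)$ and encodes the $\eta=0$ versus $\eta\neq0$ subcases in the parameters $a,b$. Neither point undermines the overall strategy, which matches the paper's.
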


\medskip{}

\begin{thm}
\label{teo.clas.3.3} A partial differential equation of the form
\eqref{eq:C-1} describes \textbf{pss} or \textbf{ss} ($\delta=1$
or $\delta=-1,$ respectively), with corresponding functions $f_{ij}=f_{ij}(z,z_{1},...,z_{k})$
satisfying \textbf{(i-ii)} with $\left(f_{31,z}\right)^{2}-\delta\left(f_{21,z}\right)^{2}\neq0$
and $\eta=0$, if and only if it can be written in the following form
\[
z_{t}-\lambda\,z_{2t}=D_{x}^{2}\left(2\,\gamma\,z_{1}\varphi'\right)-D_{x}\left(\left(z-\lambda z_{2}\right)\varphi\right)-2\delta\,r^{2}\,z_{1}\,\varphi'
\]
with $\delta=\pm1$ and associated 1-forms{\small{}{}{}{}{}{}{}
\[
\begin{array}{l}
\omega_{1}={\displaystyle -2r}z_{1}\varphi'\,dt\,,\vspace{5pt}\\
\omega_{2}=\left({\displaystyle \frac{\sigma}{\gamma}}\,(z-\lambda\,z_{2})\pm r\,{\displaystyle \frac{\sqrt{\gamma+\delta\sigma^{2}}}{\gamma}}\right)\,dx+\left[\sigma D_{x}\left(2z_{1}\varphi'\right)+\left(-{\displaystyle \frac{\sigma}{\gamma}}\,\left(z-\lambda z_{2}\right)\mp r{\displaystyle \frac{\sqrt{\gamma+\delta\sigma^{2}}}{\gamma}}\right)\,\varphi\right]\,dt\,,\vspace{5pt}\\
\omega_{3}=\pm\left({\displaystyle \frac{\sqrt{\gamma+\delta\sigma^{2}}}{\gamma}}\,(z-\lambda\,z_{2})\pm{\displaystyle \frac{\delta\,\sigma\,r}{\gamma}}\right)\,dx\pm\left[\sqrt{\gamma+\delta\sigma^{2}}\,D_{x}\left(2z_{1}\varphi'\right)+\left(-{\displaystyle \frac{\sqrt{\gamma+\delta\sigma^{2}}}{\gamma}}\,\left(z-\lambda z_{2}\right)\mp{\displaystyle \frac{\delta\,\sigma\,r}{\gamma}}\right)\,\varphi\right]\,dt\,,
\end{array}
\]
}{\small\par}

\noindent where $\varphi=\varphi(\lambda z_{1}^{2}-z^{2})$ is a differentiable
function such that $\varphi'\neq0$, and $r,\gamma,\sigma\in\mathbb{R}$
with $r\gamma\neq0$ and $\gamma+\delta\sigma^{2}\geq0$. 
\end{thm}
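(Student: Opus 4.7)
The plan is to establish both directions by exploiting the structure equations \eqref{struttura} restricted to the PDE \eqref{eq:C-1}, under the linearity ansatz provided by \textbf{(i)} and \textbf{(ii)}.

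For the forward direction, I would start by writing $f_{11}=0$ and
\[
f_{21}=p_1 z+p_2 z_2+p_0,\qquad f_{31}=q_1 z+q_2 z_2+q_0,
\]
with real constants $p_i,q_i$ satisfying $q_1^2-\delta p_1^2\neq 0$. Since $\omega_i=f_{i1}dx+f_{i2}dt$, the three structure equations reduce to the scalar identities $D_x f_{i2}-D_t f_{i1}=(\text{quadratic in the }f_{jk})$. Using \eqref{eq:C-1} to eliminate $z_t=\lambda z_{2t}+A z_3+B$, each identity becomes a polynomial in the independent jet coordinates $z_3,\,z_{2t},\dots$ whose coefficients must vanish separately. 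The first batch of vanishing coefficients forces $f_{12},f_{22},f_{32}$ to be independent of the $t$-derivatives and of $z_3$, and ties their $z_2$-dependence to the constants $p_i,q_i$; modulo the coordinate and gauge freedom recalled at the end of Section~\ref{Prelim}, I expect the six constants $(p_i,q_i)$ to collapse to the three real parameters $r,\gamma,\sigma$ with $r\gamma\neq 0$ and $\gamma+\delta\sigma^2\geq 0$, exactly as in the statement.

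The main obstacle is to extract the single-variable function $\varphi(\lambda z_1^2-z^2)$ from the remaining overdetermined system. My strategy is to notice that the $z_3$-coefficient of the first structure equation $D_x f_{12}=f_{31}f_{22}-f_{32}f_{21}$ should imply $f_{12}=-2r\,z_1\,\psi(z,z_1)$ for some $\psi$, after which the surviving equations become a linear first-order system in the derivatives of $\psi$. The crucial identity is
\[
D_x(\lambda z_1^2-z^2)=-2z_1\,(z-\lambda z_2),
\]
which shows that the combination $z-\lambda z_2$ appearing on the right-hand side of \eqref{eq:C-1} is, up to a factor $-2z_1$, the $x$-derivative of $\lambda z_1^2-z^2$; compatibility of the linear system for $\psi$ along this characteristic direction forces $\psi=\varphi'(\lambda z_1^2-z^2)$ for some differentiable $\varphi$. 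Substituting back into $D_x f_{12}=f_{31}f_{22}-f_{32}f_{21}$ then identifies $A$ and $B$ in the form displayed in the theorem.

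The converse direction is a direct, if lengthy, verification: substituting the stated $\omega_i$ into \eqref{struttura}, every cancellation reduces either to the algebraic identity $\bigl(\sqrt{\gamma+\delta\sigma^2}\bigr)^2-\delta\sigma^2=\gamma$ or to $D_x\varphi=-2z_1(z-\lambda z_2)\varphi'$, while the nondegeneracy $\omega_1\wedge\omega_2\neq 0$ holds on the open set where $r\,z_1\varphi'\cdot f_{21}\neq 0$, which is generic.
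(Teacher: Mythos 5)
Your overall strategy --- expand the structure equations on solutions of \eqref{eq:C-1}, equate coefficients of the independent jet variables, and recognize that $\varphi$ must be constant along the direction given by $D_x(\lambda z_1^2-z^2)=-2z_1(z-\lambda z_2)$ --- is exactly the route the paper takes: it packages the coefficient extraction into Lemma \ref{lema1} and then specializes to $\gamma:=\sigma_3^2-\delta\sigma_2^2\neq0$, $\eta=0$. Your characteristic identity is indeed the key point; in the paper it appears as the first-order equation $(\gamma z+m_1)\varphi_{,z_1}+\lambda\gamma z_1\varphi_{,z}=0$, which after the shift $z\mapsto\gamma z+m_1$ has general solution $\varphi=\varphi(\lambda z_1^2-z^2)$.

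There are, however, two concrete gaps. First, the form of $f_{12}$ does not come from the $z_3$-coefficient of the first structure equation: with $f_{11}$ constant that coefficient only yields $f_{12,z_2}=0$, i.e.\ $f_{12}=f_{12}(z,z_1)$. The expression $f_{12}=-z_1\varphi_{,z}/(\gamma z+m_1)$ (your $-2rz_1\varphi'$ after normalization) is forced instead by separating the $z_2$-linear and $z_2$-free parts of the combination $\sigma_3\cdot(\text{second structure equation})-\sigma_2\cdot(\text{third})$ evaluated at $\eta=0$; as written, your step would stall. Second, and more seriously, the asserted ``collapse of the six constants to $r,\gamma,\sigma$'' hides a degenerate branch that must be excluded by hand: when $m_2:=\sigma_2\alpha_3-\sigma_3\alpha_2=0$ the system \eqref{first.f22.f32}--\eqref{sec.f22.f32} cannot be solved for $f_{22},f_{32}$, and the paper shows that the resulting compatibility conditions force $\varphi=0$ and hence $f_{12}=0$, contradicting $\omega_1\wedge\omega_2\neq0$; only then is $r=-m_2\neq0$ legitimate. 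You should also record that the structure equations tie the $z_2$-coefficients of $f_{21},f_{31}$ to the $z$-coefficients by $\mu_p=-\lambda\sigma_p$, which is precisely what produces the combination $z-\lambda z_2$ throughout the $1$-forms. The converse-by-substitution is fine and is also how the paper concludes.
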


\medskip{}

\begin{thm}
\label{teo.clas.3.5} A partial differential equation of the form
\eqref{eq:C-1} describes \textbf{pss} or \textbf{ss} ($\delta=1$
or $\delta=-1,$ respectively), with corresponding functions $f_{ij}=f_{ij}(z,z_{1},...,z_{k})$
satisfying \textbf{(i-ii)} with $\left(f_{31,z}\right)^{2}-\delta\left(f_{21,z}\right)^{2}\neq0$
and $\eta\neq0$, if and only if it can be written in one of the following
two forms: \\

\textbf{(I)} 
\[
\begin{array}{ll}
z_{t}-\lambda\,z_{2t}= & D_{x}^{2}\left(\varphi\right)-D_{x}\left(\left(z-\lambda z_{2}\right)\varphi\right)\end{array}
\]
\begin{flushleft}
with $\delta=1$ and associated 1-forms 
\[
\begin{array}{l}
\omega_{1}=\eta\,\left(dx-\varphi\,dt\right)\,,\vspace{5pt}\\
\\
\omega_{2}=\left({\displaystyle -\rho}\,(z-\lambda\,z_{2})\pm\eta\,\sqrt{\rho^{2}-1}\right)\,dx\\
\hspace*{3cm}+\left[{\displaystyle -\rho}D_{x}\left(\varphi\right)+\left(\rho\,\left(z-\lambda z_{2}\right)\mp\eta\,\sqrt{\rho^{2}-1}\right)\,\varphi\right]\,dt\,,\vspace{5pt}\\
\\
\omega_{3}=\left({\displaystyle \mp\sqrt{\rho^{2}-1}}\,(z-\lambda\,z_{2})+\rho\,\eta\right)\,dx\\
\hspace*{3cm}+\left[{\displaystyle \mp\sqrt{\rho^{2}-1}}\,D_{x}\left(\varphi\right)+\left(\pm{\displaystyle \sqrt{\rho^{2}-1}}\,\left(z-\lambda z_{2}\right)-\eta\,\rho\right)\,\varphi\right]\,dt\,,
\end{array}
\]
\vspace*{0.3cm}
 where $\varphi=\varphi(z,z_{1})$ is a differentiable function such
that $D_{x}\varphi\neq0$, $\eta,\rho\in\mathbb{R}-\{0\}$ and $\rho^{2}-1\geq0$; 
\par\end{flushleft}
\noindent \begin{flushleft}
\vspace*{0.5cm}
 \textbf{(II)} 
\[
\begin{array}{ll}
z_{t}-\lambda\,z_{2t}= & \frac{1}{\eta}D_{x}^{2}\left(-{\displaystyle \frac{2\eta\gamma z_{1}\ell'+r\,\ell}{\gamma\eta^{2}+r^{2}}}\right)+\frac{1}{\eta}D_{x}\left({\displaystyle \frac{\left(-2r\,z_{1}\ell'+\eta\ell\right)}{\gamma\eta^{2}+r^{2}}\,\left(z-\lambda z_{2}\right)}\right)+2\delta z_{1}\ell'\end{array}
\]
\par\end{flushleft}
\begin{flushleft}
with $\delta=\pm1$ and associated 1-forms {\small{}{}{}{}{}{}{}{}{}{}
\[
\begin{array}{l}
\omega_{1}=\eta\,dx+\left({\displaystyle \frac{\left(-2\,r\,z_{1}\ell'+\eta\ell\right)}{\gamma\eta^{2}+r^{2}}}\right)\,dt\,,\vspace{8pt}\\
\omega_{2}=\left({\displaystyle \frac{{\displaystyle \sigma}}{\gamma}}\,(z-\lambda\,z_{2})\mp\,r\,{\displaystyle \frac{\sqrt{\gamma+\delta\sigma^{2}}}{\gamma}}\right)\,dx\vspace{3pt}\\
\qquad+\left(-{\displaystyle \frac{{\displaystyle \sigma}}{\eta\,\gamma}}D_{x}\left({\displaystyle \frac{2\,\eta\,\gamma\,z_{1}\ell'+r\,\ell}{\gamma\eta^{2}+r^{2}}}\right)+{\displaystyle \frac{\sigma}{\eta\,\gamma}}\,{\displaystyle \frac{\left(-2\,r\,z_{1}\ell'+\eta\,\ell\right)}{\gamma\eta^{2}+r^{2}}}\left(z-\lambda z_{2}\right)\mp{\displaystyle \frac{\sqrt{\gamma+\delta\sigma^{2}}}{\gamma}}\,\left({\displaystyle \frac{2\,\eta\,\gamma\,z_{1}\ell'+r\,\ell}{\gamma\eta^{2}+r^{2}}}\right)\right)\,dt\,,\vspace{8pt}\\
\omega_{3}=\pm\left({\displaystyle \frac{\sqrt{\gamma+\delta\sigma^{2}}}{\gamma}}\,(z-\lambda\,z_{2})\mp{\displaystyle \frac{\delta\,\sigma\,r}{\gamma}}\right)\,dx\vspace{3pt}\\
\qquad\pm\left(-{\displaystyle \frac{\sqrt{\gamma+\delta\sigma^{2}}}{\eta\,\gamma}}\,D_{x}\left({\displaystyle \frac{2\eta\gamma z_{1}\ell'+r\,\ell}{\gamma\eta^{2}+r^{2}}}\right)+{\displaystyle \frac{\sqrt{\gamma+\delta\sigma^{2}}}{\eta\,\gamma}}\,{\displaystyle \frac{\left(-2r\,z_{1}\ell'+\eta\ell\right)}{\gamma\eta^{2}+r^{2}}\,\left(z-\lambda z_{2}\right)}\mp{\displaystyle \frac{\delta\,\sigma}{\gamma}}\,\left({\displaystyle \frac{2\eta\gamma z_{1}\ell'+r\,\ell}{\gamma\eta^{2}+r^{2}}}\right)\right)\,dt
\end{array}
\]
}{\small\par}
\par\end{flushleft}
\begin{flushleft}
 
\par\end{flushleft}
\begin{flushleft}
\vspace*{0.3cm}
 where $\eta,r,\gamma,\sigma\in\mathbb{R}$ satisfy $\gamma+\delta\sigma^{2}\geq0$
and $\left(\gamma\eta^{2}+r^{2}\right)\eta\,\gamma\neq0$, whereas
$\ell=\ell(\lambda z_{1}^{2}-z^{2})$ is a differentiable function
such that $\ell'\neq0$. 
\par\end{flushleft}

\end{thm}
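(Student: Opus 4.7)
The sufficiency direction in both \textbf{(I)} and \textbf{(II)} is a direct computation: given the explicit 1-forms, one differentiates $\omega_i$ using the stated equation and verifies that (\ref{struttura}) holds. I focus on necessity, which is the substantive content.

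Under hypotheses (i)-(ii), one has $f_{11}=\eta\in\mathbb{R}\setminus\{0\}$ and
\[
f_{21}=a_{0}+a_{1}z+a_{2}z_{2},\qquad f_{31}=b_{0}+b_{1}z+b_{2}z_{2},
\]
with $a_{i},b_{i}\in\mathbb{R}$ constant, while $f_{12},f_{22},f_{32}$ are unknown smooth functions on a finite jet space. The plan is to substitute these into the three scalar components of the structure equations
\[
D_{x}f_{12}=f_{31}f_{22}-f_{32}f_{21},\quad D_{x}f_{22}-D_{t}f_{21}=\eta f_{32}-f_{12}f_{31},\quad D_{x}f_{32}-D_{t}f_{31}=\delta\bigl(\eta f_{22}-f_{12}f_{21}\bigr),
\]
to eliminate $z_{t},z_{2t}$ via (\ref{eq:C-1}), and to collect coefficients of the independent jet variables $z_{3},z_{4},\ldots$. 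A preliminary differential-order argument, analogous to the one used in Theorems \ref{teo.clas.3.2} and \ref{teo.clas.3.3}, shows that $f_{12},f_{22},f_{32}$ depend only on $z,z_{1},z_{2}$; then the top-order term in $z_{3}$ gives a linear algebraic system relating $A$, the $f_{i2}$ and the coefficients $a_{i},b_{i}$.

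I would then exploit the hypothesis $b_{1}^{2}-\delta a_{1}^{2}\neq 0$ to solve those algebraic relations for two of the $f_{i2}$ in terms of a single scalar potential and two free parameters, which become $\rho$ in case \textbf{(I)} or $(\sigma,\gamma,r)$ in case \textbf{(II)}; the reality requirements $\rho^{2}-1\geq 0$ and $\gamma+\delta\sigma^{2}\geq 0$ arise from the discriminant of the quadratic determining the coefficient of $\omega_{3}$. Once two $f_{i2}$ are expressed this way, the remaining integrability conditions at orders $z_{2}$ and below split into two mutually exclusive regimes: either the free parameters align so that $\delta=1$ is forced and the potential remains an arbitrary function $\varphi(z,z_{1})$ (form \textbf{(I)}), or the alignment fails and compatibility forces the potential to depend only on the quadratic invariant $\lambda z_{1}^{2}-z^{2}$ (using that $D_{x}(\lambda z_{1}^{2}-z^{2})=2z_{1}(\lambda z_{2}-z)$), yielding an arbitrary $\ell(\lambda z_{1}^{2}-z^{2})$ and form \textbf{(II)}, valid for both $\delta=\pm 1$.

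The main obstacle is the case analysis after the top-order reduction: the remaining equations form an overdetermined system in the constants $a_{i},b_{i}$ and in the jet-dependence of the $f_{i2}$, and one must examine the subcases in which some of $a_{2},b_{2}$ or $a_{1}b_{2}-a_{2}b_{1}$ vanish, show that the hypotheses $\eta\neq 0$ and $b_{1}^{2}-\delta a_{1}^{2}\neq 0$ exclude all but the two scenarios listed, and for each verify that a permissible linear change of coordinates $x\mapsto ax,\ t\mapsto bt,\ z\mapsto cz+c_{0}$ normalises the surviving constants to the form displayed in \textbf{(I)} or \textbf{(II)}. Reading off $A$ and $B$ from the coefficients of $z_{3}$ and of the lower-order terms in the structure equations then produces the explicit evolution equation stated, completing the classification.
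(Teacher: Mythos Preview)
Your outline captures the broad strategy, but there are two concrete gaps that would prevent the argument from going through as written.

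First, the preliminary analysis (what the paper packages as Lemma~\ref{lema1}) does more than bound the jet order of the $f_{i2}$: applied to an equation of the form \eqref{eq:C-1} it forces the relation $a_{2}=-\lambda a_{1}$ and $b_{2}=-\lambda b_{1}$, so that $f_{21}$ and $f_{31}$ depend on $z$ and $z_{2}$ only through the combination $z-\lambda z_{2}$. In particular $a_{1}b_{2}-a_{2}b_{1}\equiv 0$ automatically, and the subcase analysis you propose on that quantity (or on the vanishing of $a_{2},b_{2}$ individually) is not the relevant dichotomy.

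Second, the actual bifurcation into forms \textbf{(I)} and \textbf{(II)} is governed by a different invariant. After solving for $f_{22},f_{32}$ using $b_{1}^{2}-\delta a_{1}^{2}\neq 0$ and substituting back, one is led to introduce the combinations $\ell_{1}=\eta\gamma f_{12}-m_{2}\varphi$ and $\ell_{2}=m_{2}f_{12}+\eta\varphi$, where $\gamma=b_{1}^{2}-\delta a_{1}^{2}$ and $m_{2}=a_{1}b_{0}-b_{1}a_{0}$ in your notation. The remaining compatibility conditions force $\ell_{1}$ to satisfy the first-order PDE $(\gamma z+m_{1})\,\ell_{1,z_{1}}+\lambda\gamma z_{1}\,\ell_{1,z}=0$, and it is the characteristics of \emph{this} equation that produce the argument $\lambda z_{1}^{2}-z^{2}$ in case \textbf{(II)}. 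The case split is then $\gamma\eta^{2}+m_{2}^{2}=0$ versus $\neq 0$: in the first case one shows $\ell_{1}=0$ (which forces $\gamma<0$, hence $\delta=1$) and $\varphi$ remains an arbitrary function of $(z,z_{1})$, giving form \textbf{(I)}; in the second case $f_{12}$ and $\varphi$ are determined by $\ell_{1}$, yielding form \textbf{(II)}. Without identifying $\ell_{1}$ and this scalar $\gamma\eta^{2}+m_{2}^{2}$, the ``alignment of free parameters'' you describe remains too vague to carry the classification.
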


\medskip{}

A remark is in order here. By performing a transformation $z\mapsto z/\nu$,
$\nu\neq0$, one can easily get from Theorems \ref{teo.clas.3.2}-\ref{teo.clas.3.5}
analogous theorems for equations \eqref{eq} with $\nu\neq0$.

\subsection{Equations of the form $z_{2t}=A\,z_{3}+B$}

We collect here our main results on equations of the form 
\begin{equation}
z_{2t}=A\,z_{3}+B\label{eq:C-2}
\end{equation}
that describe \textbf{pss} or \textbf{ss} ($\delta=1$ or $\delta=-1,$
respectively), with corresponding functions $f_{ij}=f_{ij}(z,z_{1},...,z_{k})$
satisfying \textbf{(i-ii).}

The first theorem describes the case when $\{\left(f_{31,z_{2}}\right)^{2}-\delta\left(f_{21,z_{2}}\right)^{2}=0\}$,
whereas the next two describe the cases $\{\left(f_{31,z_{2}}\right)^{2}-\delta\left(f_{21,z_{2}}\right)^{2}\neq0,\;\eta=0\}$
and $\{\left(f_{31,z_{2}}\right)^{2}-\delta\left(f_{21,z_{2}}\right)^{2}\neq0,\;\eta\neq0\}$,
respectively.

\medskip{}

\begin{thm}
\label{teo.clas.3.2*} A partial differential equation of the form
\eqref{eq:C-2} describes \textbf{pss} or \textbf{ss} ($\delta=1$
or $\delta=-1,$ respectively), with corresponding functions $f_{ij}=f_{ij}(z,z_{1},...,z_{k})$
satisfying \textbf{(i-ii)} with $\left(f_{31,z_{2}}\right)^{2}-\delta\left(f_{21,z_{2}}\right)^{2}=0$,
if and only if it can be written in one of the following two forms:\\

\noindent \textbf{(I) } 
\[
z_{2t}=D_{x}\psi+a\,\psi+b\,z_{2}\pm n\,(1-a)\alpha
\]

\noindent with $\delta=1$ and associated 1-forms 
\[
\begin{array}{l}
\omega_{1}=\mp a\,dx+\left[\left(1-a\right)\left(mz_{1}+n\right)\pm b\right]\,dt\,,\vspace{5pt}\\
\omega_{2}=\left(z_{2}+\alpha\right)\,dx+\left[\psi\mp\left(1-a\right)\left({\displaystyle \frac{m}{2}z_{1}^{2}}+n\,z_{1}\right)-{\displaystyle \alpha\,b}\right]\,dt\,,\vspace{5pt}\\
\omega_{3}=\pm\,\omega_{2}-(1-a)\,m\,dt,
\end{array}
\]
where $\psi=\psi(z,z_{1},z_{2})$ is a real differentiable function,
whereas $a,b\in\{0,1\}$ and $\alpha,m,n\in\mathbb{R}$ are such that
$(a-1)\alpha m=0$, $(a-1)b=0$ and, in addition, $a\psi+bz_{2}+m^{2}+n^{2}\neq0$;

\vspace*{0.4cm}

\noindent \textbf{(II) } 
\[
z_{2t}=\mp D_{x}^{3}h-D_{x}\left(\left(z_{2}+m\right)\,h\right)-z_{2}\,D_{x}\left(h\right)
\]

\noindent with $\delta=1$ and associated 1-forms{\small{}{}{}{}{}{}{}
\[
\begin{array}{l}
\omega_{1}=\eta\,dx+\left(\mp D_{x}h-\eta\,h\right)\,dt\,,\vspace{5pt}\\
\omega_{2}=\left({\displaystyle \frac{z_{2}}{\alpha\mp\beta}}+\beta\right)\,dx+\left({\displaystyle \frac{\mp D_{x}^{2}h-\eta\,D_{x}h}{\alpha\mp\beta}}-\left({\displaystyle \frac{z_{2}}{\alpha\mp\beta}}+\beta\right)h\right)\,dt\,,\vspace{5pt}\\
\omega_{3}=\left(\pm{\displaystyle \frac{z_{2}}{\alpha\mp\beta}}+\alpha\right)\,dx+\left({\displaystyle \frac{-D_{x}^{2}h\mp\eta\,D_{x}h}{\alpha\mp\beta}}\mp\left({\displaystyle \frac{z_{2}}{\alpha\mp\beta}}\pm\alpha\right)h\right)\,dt,
\end{array}
\]
} where $\alpha,\beta,\eta\in\mathbb{R}$, $\alpha\mp\beta\neq0$,
$m:=\pm\left(\alpha^{2}-\beta^{2}-\eta^{2}\right)$ and $h=h(z)$
is a real differentiable function satisfying $h'\neq0$. 
\end{thm}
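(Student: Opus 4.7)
The plan is to mirror the strategy of Theorem \ref{teo.clas.3.2} in the setting of equations of the form \eqref{eq:C-2}. I impose the structure equations \eqref{struttura} on 1-forms $\omega_i=f_{i1}\,dx+f_{i2}\,dt$ satisfying (i)--(ii), use \eqref{eq:C-2} to eliminate the mixed derivative $z_{2t}$, and then partition into cases based on the parameters appearing in the ansatz.

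Writing $f_{11}=\eta$, $f_{21}=p_1z+p_2z_2+p_0$ and $f_{31}=q_1z+q_2z_2+q_0$ with constant coefficients, the component form of \eqref{struttura} becomes
\[
D_xf_{12}=f_{31}f_{22}-f_{32}f_{21},\quad D_xf_{22}-D_tf_{21}=\eta f_{32}-f_{12}f_{31},\quad D_xf_{32}-D_tf_{31}=\delta(\eta f_{22}-f_{12}f_{21}).
\]
Since $f_{i2}$ depend only on $z$ and its $x$-derivatives, the $z_t$-contributions arising in $D_tf_{21}$ and $D_tf_{31}$ cannot be absorbed on the right-hand sides, forcing $p_1=q_1=0$. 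The degeneracy hypothesis then reduces to $q_2^2=\delta p_2^2$; if $p_2=q_2=0$ (which is necessarily the case when $\delta=-1$), then \eqref{eq:C-2} does not appear in the structure equations and cannot be implied by them unless $A=B=0$, contradicting $A^2+B^2\ne 0$. Hence $\delta=1$ and $p_2\ne 0$, with $q_2=\epsilon p_2$ for some $\epsilon\in\{-1,1\}$.

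Substituting $z_{2t}=Az_3+B$ into the second and third structure equations, the combination $\epsilon$ times the second minus the third eliminates the $Az_3+B$ contribution and yields a linear relation among $\epsilon f_{22}-f_{32}$, $f_{12}$, and the constants $p_0,q_0,\eta$. The coefficient of $z_3$ in the second equation then determines $A$, and the $z_3$-free part determines $B$; feeding the resulting $f_{22},f_{32}$ into the first structure equation gives a single residual constraint. This constraint splits into two subfamilies according to whether $q_0=\pm p_0$ (in which case $\omega_3=\pm\omega_2$ up to a $dt$-term and one obtains form \textbf{(I)}, with $a\in\{0,1\}$ recording whether $\eta=0$ or $\eta$ is normalized to $\pm1$ by the admissible rescaling $x\mapsto ax$, and the compatibility $(a-1)\alpha m=(a-1)b=0$ falling out automatically), or $q_0\ne\pm p_0$ (in which case an arbitrary function $h(z)$ enters $f_{12}$ and one obtains form \textbf{(II)}, with the residual constants $\alpha,\beta$ absorbing $p_0,q_0$ and $p_2=1/(\alpha\mp\beta)$).

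The technical heart of the argument is to prove that the $f_{i2}$ are of bounded jet order, specifically that they depend only on $(z,z_1,z_2,z_3)$ with the $z_3$-dependence entering linearly via $D_x$ of a function of $(z,z_1,z_2)$, and moreover that in form \textbf{(II)} the arbitrary function depends only on $z$. This is obtained by differentiating the structure equations successively with respect to $z_k$ for $k\ge4$ and showing the resulting derivatives vanish, whose careful book-keeping is the principal obstacle. The converse direction---that each form \textbf{(I)} and \textbf{(II)} describes a spherical surface with the listed 1-forms---is then verified by direct substitution into \eqref{struttura}.
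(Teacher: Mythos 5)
Your proposal follows essentially the same route as the paper: the reduction of the structure equations to algebraic conditions on the $f_{ij}$ (forcing $f_{21},f_{31}$ to depend on $z_{2}$ alone, then $\delta=1$ and $\mu_{3}=\pm\mu_{2}\neq0$ from the degeneracy and $A^{2}+B^{2}\neq0$), the elimination of $A\,z_{3}+B$ by the combination $\pm(\text{second})-(\text{third})$, the determination of $A$ and $B$ from the $z_{3}$-coefficient and the remainder, and the case split on $\alpha_{3}\mp\alpha_{2}=0$ versus $\alpha_{3}\mp\alpha_{2}\neq0$ crossed with $\eta=0$ versus $\eta\neq0$, followed by normalization under the admissible linear changes of coordinates, are exactly the steps of the paper's proof (Lemma \ref{lema1*} plus the case analysis). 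The one inaccuracy is your description of the jet-order reduction: since $f_{i1}$ depends only on $z_{2}$ and $A,B$ only on $(z,z_{1},z_{2})$, a descending induction on the top derivative order shows that the coefficient of the highest $z_{s+1}$ in the $dx\wedge dt$-component of the structure equations is $f_{i2,z_{s}}$, so one gets $f_{i2,z_{s}}=0$ for \emph{all} $s\geq3$ (and $f_{12,z_{2}}=0$ from the $z_{3}$-coefficient); there is no residual linear $z_{3}$-dependence to carry, and stopping the differentiation at $k\geq4$ as you propose would leave spurious $z_{3}$-terms that must still be ruled out before the case analysis can proceed.
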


\medskip{}

\begin{thm}
\label{teo.clas.3.3*} A partial differential equation of the form
\eqref{eq:C-2} describes \textbf{pss} or \textbf{ss} ($\delta=1$
or $\delta=-1,$ respectively), with corresponding functions $f_{ij}=f_{ij}(z,z_{1},...,z_{k})$
satisfying \textbf{(i-ii)} with $\left(f_{31,z_{2}}\right)^{2}-\delta\left(f_{21,z_{2}}\right)^{2}\neq0$
and $\eta=0$, if and only if it can be written in one of the following
two forms:\\

\noindent \textbf{(I)} 
\[
z_{2t}=D_{x}\psi
\]
with $\delta=\pm1$ and associated 1-forms{\small{}{}{}{}{}{}{}
\[
\begin{array}{l}
\omega_{1}={\displaystyle -}\phi'\,dt\,,\vspace{8pt}\\
\omega_{2}={\displaystyle z_{2}\,dx+\left(\psi\pm r\,\phi\right)\,dt\,},\vspace{8pt}\\
\omega_{3}={\displaystyle \pm r\,z_{2}\,dx+\left(\pm r\,\psi+\delta\,\phi\right)\,dt}
\end{array}
\]
} where $r\in\mathbb{R}$ and $\psi=\psi(z,z_{1},z_{2})$, $\phi=\phi(z_{1})$
are real differentiable functions such that $\psi$ is not constant
and $\phi''=-\left(r^{2}-\delta\right)\,\phi$, $\phi\neq0$, i.e.,
\[
\phi=\left\{ \begin{array}{lc}
a\,\cos\left(\sqrt{r^{2}-\delta}\,z_{1}\right)+b\,\sin\left(\sqrt{r^{2}-\delta}\,z_{1}\right)\vspace{7pt}\qquad & {\displaystyle r^{2}-\delta}>0\\
a\,\cosh\left(\sqrt{-r^{2}+\delta}\,z_{1}\right)+b\,\sinh\left(\sqrt{-r^{2}+\delta}\,z_{1}\right)\qquad & r^{2}-\delta<0
\end{array}\right.
\]
with $a.b\in\mathbb{R}$, $a^{2}+b^{2}\neq0$;\vspace*{0.4cm}

\noindent \textbf{(II)} 
\[
z_{2t}=-D_{x}^{2}\left(2\,\gamma\,z_{1}\varphi'\right)-D_{x}\left(\left(z_{2}+m\right)\varphi\right)+2\,\delta\,r^{2}\,z_{1}\varphi'
\]
with $\delta=\pm1$ and associated 1-forms{\small{}{}{}{}{}{}{}
\[
\begin{array}{l}
\omega_{1}={\displaystyle -2\,r}\,z_{1}\varphi'\,dt\,,\vspace{8pt}\\
\omega_{2}=\left({\displaystyle \frac{\mu}{\gamma}}\,\left(z_{2}+m\right)\mp{\displaystyle r\,\frac{\sqrt{\gamma+\delta\mu^{2}}}{\gamma}}\right)\,dx+\left[-\mu D_{x}\left({\displaystyle 2z_{1}\varphi'}\right)+\left(-{\displaystyle \frac{\mu}{\gamma}}\,\left(z_{2}+m\right)\pm{\displaystyle r\,\frac{\sqrt{\gamma+\delta\mu^{2}}}{\gamma}}\right)\,\varphi\right]\,dt\,,\vspace{8pt}\\
\omega_{3}=\left({\displaystyle \pm\frac{\sqrt{\gamma+\delta\mu^{2}}}{\gamma}}\,\left(z_{2}+m\right)-{\displaystyle \delta\,r\,\frac{\mu}{\gamma}}\right)\,dx+\vspace{5pt}\\
\qquad\qquad+\left[\mp\sqrt{\gamma+\delta\mu^{2}}D_{x}\left({\displaystyle 2z_{1}\varphi'}\right)+\left(\mp{\displaystyle \frac{\sqrt{\gamma+\delta\mu^{2}}}{\gamma}}\,\left(z_{2}+m\right)+{\displaystyle {\displaystyle \delta\,r\,\frac{\mu}{\gamma}}}\right)\,\varphi\right]\,dt\,,
\end{array}
\]
} where $m,r,\gamma,\mu\in\mathbb{R}$ and $\varphi=\varphi(2\,m\,z+z_{1}^{2})$
is a differentiable function such that $\gamma\neq0,$ $\gamma+\delta\mu^{2}\geq0$
and $\varphi'\neq0$. 
\end{thm}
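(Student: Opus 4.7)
The plan is to impose the ansatz coming from (i--ii) with $\eta=0$ and reduce the structure equations \eqref{struttura} on the equation manifold of \eqref{eq:C-2}. Write
\[
\omega_1 = f_{12}\,dt,\quad \omega_2 = (p_1 z+q_1 z_2+c_1)\,dx+f_{22}\,dt,\quad \omega_3 = (p_2 z+q_2 z_2+c_2)\,dx+f_{32}\,dt,
\]
with real constants $p_i,q_i,c_i$ satisfying $q_2^2-\delta q_1^2\neq 0$. On the equation manifold one uses the contact identities $dz=z_1\,dx+z_t\,dt$, $dz_1=z_2\,dx+z_{1t}\,dt$, $dz_2=z_3\,dx+(Az_3+B)\,dt$, so that each $d\omega_i-(\mathrm{RHS})_i$ becomes a $dx\wedge dt$-multiple whose coefficient is polynomial in the still-free jets $z_3,\ldots,z_t,z_{1t}$.

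The first step is to note that the coefficient of $z_t$ in the second and third structure equations comes solely from $D_t f_{21}=p_1 z_t+q_1(Az_3+B)$ and $D_t f_{31}=p_2 z_t+q_2(Az_3+B)$, while $f_{22},f_{32}$ are $z_t$-free by the standing assumption $f_{ij}=f_{ij}(z,z_1,\ldots,z_k)$; hence $p_1=p_2=0$. The three structure equations collapse to
\[
D_x f_{12}=f_{31}f_{22}-f_{32}f_{21},\quad D_x f_{22}+f_{12}f_{31}=q_1(Az_3+B),\quad D_x f_{32}+\delta f_{12}f_{21}=q_2(Az_3+B),
\]
with $f_{21}=q_1 z_2+c_1$, $f_{31}=q_2 z_2+c_2$. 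Eliminating $Az_3+B$ from the last two yields the key compatibility relation
\[
q_2\, D_x f_{22}-q_1\, D_x f_{32}+f_{12}\bigl((q_2^2-\delta q_1^2)z_2+(q_2 c_2-\delta q_1 c_1)\bigr)=0,
\]
while matching $z_3$-coefficients in the first equation fixes the $z_2$-dependence of $f_{22}$ and $f_{32}$.

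The next step is a case split on whether the constant part $q_2 c_2-\delta q_1 c_1$ vanishes. When it does, the allowed coordinate changes normalize $q_1=1$, $q_2=\pm r$, $c_1=c_2=0$, and the first structure equation becomes, for $\phi(z_1):=-f_{12}$, the linear ODE $\phi''=-(r^2-\delta)\phi$; setting $f_{22}=\psi\pm r\phi$, $f_{32}=\pm r\psi+\delta\phi$ with $\psi=\psi(z,z_1,z_2)$ arbitrary yields $z_{2t}=D_x\psi$ from the second structure equation, which is case~(I). When it does not vanish, one reparametrizes $q_1=\mu/\gamma$, $q_2=\pm\sqrt{\gamma+\delta\mu^2}/\gamma$, so $q_2^2-\delta q_1^2=1/\gamma$ and $q_2 c_2-\delta q_1 c_1=m/\gamma$; the compatibility relation becomes $q_2 D_x f_{22}-q_1 D_x f_{32}+f_{12}(z_2+m)/\gamma=0$. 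The characteristic vector field $z_1\partial_z+(z_2+m)\partial_{z_1}$ on the $(z,z_1)$-plane has the single invariant $I=2mz+z_1^2$ (since $D_x I=2z_1(z_2+m)$), so solving the compatibility forces $f_{12}=-2r z_1 \varphi'(I)$ for some differentiable $\varphi=\varphi(I)$. Substituting back into the remaining structure equations produces the required $f_{22},f_{32}$ and the stated form of $A,B$ in case~(II). The converse direction is then a direct computation of $d\omega_i$ on each of the explicit 1-form triples, using the ODE $\phi''+(r^2-\delta)\phi=0$ (in~(I)) or the relation $D_x\varphi=2z_1(z_2+m)\varphi'$ (in~(II)).

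The main obstacle is pinning down the two alternatives precisely: establishing that the compatibility relation admits solutions only with $f_{12}$ either an eigenfunction of $d^2/dz_1^2$ (case~I) or a function of the single characteristic invariant $2mz+z_1^2$ (case~II), and that no intermediate possibilities slip through. The bookkeeping around the $\pm$ signs, the reality constraint $\gamma+\delta\mu^2\geq 0$ coming from the square root, and the ancillary coordinate and gauge normalizations needed to bring the parameters into the displayed form is delicate, but routine once the two regimes are cleanly separated.
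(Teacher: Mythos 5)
Your overall strategy is the same as the paper's: impose the ansatz, reduce the structure equations on the equation manifold, obtain the compatibility relation $q_2D_xf_{22}-q_1D_xf_{32}+f_{12}\bigl((q_2^2-\delta q_1^2)z_2+(q_2c_2-\delta q_1c_1)\bigr)=0$, and split into cases. The reduction itself (including $p_1=p_2=0$ via the $z_t$-coefficient, and the extraction of $f_{12}=-\varphi_{,z_1}/\gamma$ together with the characteristic equation for $\varphi$) is correct and matches what the paper obtains from its Lemma \ref{lema1*}. However, there is a genuine gap in the case analysis: you split on the vanishing of $m_1:=q_2c_2-\delta q_1c_1$, whereas the dichotomy that actually separates forms (I) and (II) is governed by $m_2:=q_1c_2-q_2c_1$ (the determinant of the linear system that the relations $q_2f_{22}-q_1f_{32}=\varphi$ and $c_2f_{22}-c_1f_{32}=z_2(f_{12,z_1}-\varphi)$ impose on $f_{22},f_{32}$). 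These two constants are genuinely independent: for instance $\delta=1$, $(q_1,q_2)=(1,2)$, $(c_1,c_2)=(2s,s)$ with $s\neq0$ gives $m_1=0$ but $m_2=-3s\neq0$. In that situation your Case~A applies, yet your claim that ``the allowed coordinate changes normalize $c_1=c_2=0$'' is false -- $m_1=0$ is a single linear condition on $(c_1,c_2)$ and linear changes of $x,t,z$ only rescale these constants. Consequently the first structure equation does \emph{not} reduce to $f_{12,z_1}=\varphi$ and no second-order ODE for $\phi$ arises; instead one must solve the nondegenerate $2\times2$ system for $f_{22},f_{32}$, and the outcome is form (II) with $m=0$ (whose 1-forms carry the nonzero constants $\mp r\sqrt{\gamma+\delta\mu^2}/\gamma$ and $-\delta r\mu/\gamma$, and whose equation contains the term $2\delta r^2z_1\varphi'$, which is not a total $x$-derivative for generic $\varphi$). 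So as written your Case~A silently drops an entire subfamily of form (II).

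The correct organization, as in the paper, is: if $m_2=0$ then a contradiction argument (using $\omega_1\wedge\omega_2\neq0$) forces $c_1=c_2=0$, hence also $m_1=0$, and only then does the first structure equation yield $f_{12,z_1}=\varphi$ and the ODE $\varphi''+\gamma\varphi=0$ giving form (I); if $m_2\neq0$ one solves for $f_{22},f_{32}$ and obtains form (II), with $m_1$ surviving as the free parameter $m$ (possibly zero). Relatedly, in your Case~B you implicitly invert the system for $f_{22},f_{32}$ without noting that this requires $m_2\neq0$; it happens that $m_1\neq0$ does imply $m_2\neq0$, but that implication itself requires the same nondegeneracy argument ($m_2=0$ with $m_1\neq0$ forces $\varphi=0$ and then $f_{12}=0$, contradicting $\omega_1\wedge\omega_2\neq0$), which your write-up omits. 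The remaining ingredients -- the invariant $2mz+z_1^2$, the sign and reality bookkeeping, and the converse by direct computation -- are fine once the cases are split on $m_2$ rather than $m_1$.
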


\medskip{}

\begin{thm}
\label{teo.clas.3.5*} A partial differential equation of the form
\eqref{eq:C-2} describes \textbf{pss} or \textbf{ss} ($\delta=1$
or $\delta=-1,$ respectively), with corresponding functions $f_{ij}=f_{ij}(z,z_{1},...,z_{k})$
satisfying \textbf{(i-ii)} with $\left(f_{31,z_{2}}\right)^{2}-\delta\left(f_{21,z_{2}}\right)^{2}\neq0$
and $\eta\neq0$, if and only if it can be written in one of the following
two forms: \\

\textbf{(I)} 
\[
z_{2t}=D_{x}^{2}\left(\varphi-\frac{r}{\eta^{2}}\,e^{-z_{1}}\right)-D_{x}\left(z_{2}\,\varphi\right)+r\,e^{-z_{1}}
\]
\begin{flushleft}
with $\delta=1$ and associated 1-forms 
\[
\begin{array}{l}
\omega_{1}=\eta\,\left(dx-\varphi\,dt\right)\,,\vspace{5pt}\\
\\
\omega_{2}=\left(-\rho\,z_{2}\pm\eta\,\sqrt{\rho^{2}-1}\right)\,dx\\
\hspace*{3cm}+\left[-\rho\,D_{x}(\varphi)+\left(\rho\,z_{2}\mp\eta\,\sqrt{\rho^{2}-1}\right)\,\left(\varphi-{\displaystyle \frac{r}{\eta^{2}}}e^{-z_{1}}\right)\right]\,dt\,,\vspace{5pt}\\
\\
\omega_{3}=\left(\mp\sqrt{\rho^{2}-1}\,z_{2}+\eta\,\rho\right)\,dx\\
\hspace*{3cm}+\left[\mp\sqrt{\rho^{2}-1}\,D_{x}(\varphi)+\left(\pm\sqrt{\rho^{2}-1}\,z_{2}-\eta\,\rho\right)\,\left(\varphi-{\displaystyle \frac{r}{\eta^{2}}}e^{-z_{1}}\right)\right]\,dt,
\end{array}
\]
\par\end{flushleft}
\begin{flushleft}
\vspace*{0.3cm}
 where $\eta,\rho,r\in\mathbb{R}$, $\eta\neq0$, $\rho^{2}-1\geq0$
and $\varphi=\varphi(z,z_{1})$ is a differentiable function such
that $D_{x}\varphi\neq0$; 
\par\end{flushleft}
\noindent \begin{flushleft}
\vspace*{0.5cm}
 \textbf{(II)} 
\par\end{flushleft}
\noindent \begin{flushleft}
\[
\begin{array}{ll}
z_{2t}= & \frac{1}{\eta}D_{x}^{2}\left({\displaystyle \frac{2\eta\gamma z_{1}\ell'-r\,\ell}{\gamma\eta^{2}+r^{2}}}\right)+\frac{1}{\eta}D_{x}\left({\displaystyle \frac{\left(2r\,z_{1}\ell'+\eta\ell\right)}{\gamma\eta^{2}+r^{2}}\,\left(z_{2}+m\right)}\right)-2\delta z_{1}\ell'\end{array}
\]
\par\end{flushleft}
\begin{flushleft}
with $\delta=\pm1$ and associated 1-forms {\small{}{}{}{}{}{}{}
\[
\begin{array}{l}
\omega_{1}=\eta\,dx+\left({\displaystyle \frac{2\,r\,z_{1}\ell'+\eta\ell}{\gamma\eta^{2}+r^{2}}}\right)\,dt\,,\vspace{8pt}\\
\omega_{2}=\left({\displaystyle \frac{{\displaystyle \mu}}{\gamma}}\,(z_{2}+m)\mp\,r\,{\displaystyle \frac{\sqrt{\gamma+\delta\mu^{2}}}{\gamma}}\right)\,dx\vspace{3pt}\\
\qquad+\left({\displaystyle \frac{{\displaystyle \mu}}{\eta\,\gamma}}D_{x}\left({\displaystyle \frac{2\eta\gamma z_{1}\ell'-r\,\ell}{\gamma\eta^{2}+r^{2}}}\right)+{\displaystyle \frac{\mu}{\eta\,\gamma}}\,{\displaystyle \frac{\left(2r\,z_{1}\ell'+\eta\ell\right)}{\gamma\eta^{2}+r^{2}}}\left(z_{2}+m\right)\pm{\displaystyle {\displaystyle \frac{\sqrt{\gamma+\delta\mu^{2}}}{\gamma}}}\,\left({\displaystyle \frac{2\eta\gamma z_{1}\ell'-r\,\ell}{\gamma\eta^{2}+r^{2}}}\right)\right)\,dt\,,\vspace{8pt}\\
\omega_{3}=\pm\left({\displaystyle {\displaystyle \frac{\sqrt{\gamma+\delta\mu^{2}}}{\gamma}}}\,(z_{2}+m)\mp{\displaystyle \frac{\delta\,\mu\,r}{\gamma}}\right)\,dx\vspace{3pt}\\
\qquad\pm\left({\displaystyle \frac{\sqrt{\gamma+\delta\mu^{2}}}{\eta\,\gamma}}\,D_{x}\left({\displaystyle {\displaystyle \frac{2\eta\gamma z_{1}\ell'-r\,\ell}{\gamma\eta^{2}+r^{2}}}}\right)+{\displaystyle \frac{\sqrt{\gamma+\delta\mu^{2}}}{\eta\,\gamma}}\,{\displaystyle \frac{\left(2r\,z_{1}\ell'+\eta\ell\right)}{\gamma\eta^{2}+r^{2}}\,\left(z_{2}+m\right)}\pm{\displaystyle \frac{\delta\,\mu}{\gamma}}\,\left({\displaystyle \frac{2\eta\gamma z_{1}\ell'-r\,\ell}{\gamma\eta^{2}+r^{2}}}\right)\right)\,dt
\end{array}
\]
}{\small\par}
\par\end{flushleft}
\begin{flushleft}
 
\par\end{flushleft}
\begin{flushleft}
\vspace*{0.3cm}
 where $r,\gamma,\mu,\eta\in\mathbb{R}$ satisfy $\gamma+\delta\mu^{2}\geq0$
and $\left(\gamma\eta^{2}+r^{2}\right)\gamma\,\eta\neq0$, whereas
$\ell=\ell(z_{1}^{2}+2\,m\,z)$ is a differentiable function such
that $\ell'\neq0$. 
\par\end{flushleft}

\end{thm}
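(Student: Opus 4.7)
The plan is to follow the general strategy employed for the companion Theorems \ref{teo.clas.3.2*}--\ref{teo.clas.3.3*}, specialized here to the branch $\eta\neq 0$ together with $(f_{31,z_{2}})^{2}-\delta(f_{21,z_{2}})^{2}\neq 0$. First I would turn the hypotheses into an explicit ansatz: condition \textbf{(i)} gives $f_{11}=\eta$ (a nonzero real constant), while \textbf{(ii)} lets me write $f_{21}=p_{1}z+p_{2}z_{2}+p_{3}$ and $f_{31}=q_{1}z+q_{2}z_{2}+q_{3}$ with constants $p_{i},q_{i}$ constrained by $q_{2}^{2}-\delta p_{2}^{2}\neq 0$. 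The remaining unknowns are the three functions $f_{12},f_{22},f_{32}$ (a priori depending on $z$ and finitely many of its $x$-derivatives) together with $A,B$.

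Next I substitute this ansatz into the three structure equations \eqref{struttura}, obtaining identities of the form $D_{x}f_{i2}-D_{t}f_{i1}=\Phi_{i}(f_{jk})$ that must hold on every solution of $z_{2t}=A\,z_{3}+B$. Because $f_{11},f_{21},f_{31}$ depend only on $z$ and $z_{2}$, the operators $D_{t}f_{i1}$ produce only the jet variables $z_{t}$ and $z_{2t}$; using the equation to eliminate $z_{2t}$, each identity becomes a polynomial condition in the independent jet variables $z_{t},z_{3},z_{4},\dots$ whose coefficients must vanish. Matching the coefficient of $z_{t}$ forces each $f_{i2}$ to be independent of $z_{t}$, and matching the highest-order derivative pins down $f_{i2}$ as affine in $z_{3}$, with its coefficient determined explicitly in terms of $A$ and the constants $p_{i},q_{i}$.

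Working downward in derivative order, the remaining coefficients reduce to an overdetermined algebraic/differential system for $p_{i},q_{i}$ and for the $(z,z_{1},z_{2})$-dependent parts of $f_{12},f_{22},f_{32},A,B$. This is where the bifurcation between forms \textbf{(I)} and \textbf{(II)} arises: depending on whether a certain quadratic compatibility expression built from $p_{1},q_{1}$ vanishes, one either obtains a family parametrized by a free function $\varphi(z,z_{1})$ with constants $\rho,\eta,r$ subject to $\rho^{2}-1\geq 0$ (case \textbf{(I)}, which forces $\delta=1$ since the parametric quadratic has no real solution for $\delta=-1$), or the free function is forced to be a function $\ell$ of the single characteristic variable $z_{1}^{2}+2mz$, with auxiliary constants $\gamma,\mu,r,\eta$ constrained by $\gamma+\delta\mu^{2}\geq 0$ and $(\gamma\eta^{2}+r^{2})\gamma\eta\neq 0$ (case \textbf{(II)}, valid for $\delta=\pm1$). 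The constant $m$ and the combination $z_{1}^{2}+2mz$ emerge by integrating the characteristic ODE produced by the mixed compatibility condition between $D_x f_{22}$ and $D_x f_{32}$.

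The main obstacle I anticipate is the algebraic bookkeeping of this case split, and in particular the step where the radicals $\sqrt{\rho^{2}-1}$ and $\sqrt{\gamma+\delta\mu^{2}}$ appear as roots of the residual quadratic compatibility equations. The coordinate symmetry $x\mapsto ax,\;t\mapsto bt,\;z\mapsto cz+c_{0}$ declared in Section \ref{sec3} has to be used to normalize the free constants and suppress spurious parameters, so that the two listed normal forms are actually exhaustive. Once the correct parametrization of $p_{i},q_{i}$ in terms of $\rho$ (respectively $\mu,\gamma,r$) has been identified, the converse implication --- that any equation written in the stated form does describe \textbf{pss} or \textbf{ss} with the displayed 1-forms --- reduces to a direct verification that \eqref{struttura} and the nondegeneracy \eqref{eq:nondeg_cond} hold on the equation, which is routine.
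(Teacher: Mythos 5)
Your overall strategy---substituting the ansatz dictated by \textbf{(i-ii)} into the structure equations, eliminating $z_{2t}$ via the equation, equating coefficients of the independent jet variables, and normalizing the resulting case split by the admissible coordinate changes---is the same route the paper follows (the common first stage is packaged there as Lemma \ref{lema1*}). However, the proposal never gets past the level of a roadmap, and the one concrete structural claim it makes about the bifurcation is incorrect. The structure equations force $f_{21}$ and $f_{31}$ to be independent of $z$: since $z_{t}$ is a free jet variable for equations of the form \eqref{eq:C-2}, the term $-f_{i1,z}\,z_{t}\,dx\wedge dt$ coming from $df_{i1}\wedge dx$ cannot be cancelled, so in your notation $p_{1}=q_{1}=0$. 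Consequently the dichotomy between \textbf{(I)} and \textbf{(II)} cannot be governed by an expression ``built from $p_{1},q_{1}$''. The actual splitting invariant is $\gamma\eta^{2}+m_{2}^{2}$, with $\gamma=q_{2}^{2}-\delta p_{2}^{2}$ and $m_{2}=p_{2}q_{3}-q_{2}p_{3}$: its vanishing forces $\gamma<0$, hence $\delta=1$, and makes the auxiliary function $\ell_{1}:=\eta\gamma f_{12}-m_{2}\varphi$ collapse to $r\,e^{-\eta\gamma z_{1}/m_{2}}$ with $m_{1}=0$ (whence the exponential terms of \textbf{(I)}), while its nonvanishing leaves $\ell_{1}$ an arbitrary solution of the characteristic equation $m_{1}\ell_{1,z_{1}}-\gamma z_{1}\ell_{1,z}=0$, which after the rescaling $z\mapsto\gamma z$ yields $\ell=\ell(z_{1}^{2}+2mz)$ in \textbf{(II)}.

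A second slip: matching the coefficient of the top-order derivative does not make $f_{i2}$ ``affine in $z_{3}$''; it forces $f_{i2}$ to be independent of $z_{3}$ altogether and yields $f_{12,z_{2}}=0$ together with $f_{p2,z_{2}}=\mu_{p}A$ for $p=2,3$. It is precisely the integration of the latter that introduces the function $\varphi=\mu_{3}f_{22}-\mu_{2}f_{32}$ of $(z,z_{1})$ on which the whole subsequent analysis rests: solving \eqref{first.f22.f32*} and \eqref{last.eq.f22.f32*} for $f_{22},f_{32}$, substituting into \eqref{sec.f22.f32*}, and differentiating in $z_{2}$ produces the pair $\ell_{1},\ell_{2}$ and equations \eqref{4.31*}--\eqref{4.33*}, from which both normal forms are read off. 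Without identifying $\varphi$, $\ell_{1}$, $\ell_{2}$ and the correct splitting invariant, the plan cannot be carried to the stated forms, so the core of the argument is still missing.
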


\vspace*{0.3cm}

\subsection{Examples\label{subsec:examples}}

\medskip{}

Here we collect some examples of equations described by Theorems \ref{teo.clas.3.2}-\ref{teo.clas.3.5*},
where the corresponding ZCRs (or linear problems) admit some free
parameter. 
\begin{example}
\label{ex3.3} In Theorem \ref{teo.clas.3.2} - (II), by choosing
$h(z)=z+m$, $m\in\mathbb{R}$, one gets the nonlinear partial differential
equation 
\begin{eqnarray}
z_{t}-\lambda\,z_{2t}=(\lambda\,z+\lambda\,m\mp1)\,z_{3}+2\,\lambda\,z_{1}\,z_{2}-3\,z\,z_{1}-m\,z_{1},\quad\lambda\in\mathbb{R},\label{CH.eq.}
\end{eqnarray}

\noindent which describes \textbf{pss} with associated 1-forms{\small{}{}{}{}{}{}
\[
\begin{array}{l}
\omega_{1}=\eta\,dx+\left(\mp\,z_{1}-\eta\,\left(z+m\right)\right)\,dt\,,\vspace{5pt}\\
\omega_{2}={\displaystyle \frac{1}{\alpha}}\left[\left(z-\lambda z_{2}\pm{\displaystyle \frac{\eta^{2}}{2}}\mp{\displaystyle \frac{\alpha^{2}}{2}}\right)\,dx+\left(\mp z_{2}-\eta\,z_{1}-\left(z-\lambda z_{2}\pm{\displaystyle \frac{\eta^{2}}{2}}\mp{\displaystyle \frac{\alpha^{2}}{2}}\right)\left(z+m\right)\right)\,dt\right]\,,\vspace{5pt}\\
\omega_{3}=\pm{\displaystyle \frac{1}{\alpha}}\left[\left(z-\lambda z_{2}\pm{\displaystyle \frac{\eta^{2}}{2}}\pm{\displaystyle \frac{\alpha^{2}}{2}}\right)\,dx+\left(\mp z_{2}-\eta\,z_{1}-\left(z-\lambda z_{2}\pm{\displaystyle \frac{\eta^{2}}{2}}\pm{\displaystyle \frac{\alpha^{2}}{2}}\right)\left(z+m\right)\right)\,dt\right],
\end{array}
\]
}{\small\par}

\noindent where $\alpha\in\mathbb{R}-\{0\}$ and $\eta\in\mathbb{R}$. 

One can see that, up to a translation of
$z$ by a constant, (\ref{CH.eq.}) is the CH-r equation described
in \citep{DGH, RaSc}, when it depends on $z_{t}$.

\noindent In particular, for $\lambda=1$ and $m=\pm1$ one gets from
\eqref{CH.eq.} the Camassa-Holm equation \citep{DGH} 
\[
z_{t}-z_{2t}=z\,z_{3}+2\,z_{1}\,z_{2}-3\,z\,z_{1}\mp\,z_{1}.
\]
Also, by taking $\lambda=m=0$ in \eqref{CH.eq.} one gets the \textit{KdV
equation} in the following form 
\[
z_{t}=z_{3}-3\,z\,z_{1}.
\]

In these examples one can check that $\alpha$ is a free parameter,
i.e., a parameter appearing in the $1$-forms but not in the equation,
that cannot be removed by gauge transformations. 
\end{example}

\noindent \medskip{}

\begin{example}
In Theorem \ref{teo.clas.3.2*} -- (I), with $a=\alpha=0$, up to
a coordinate transformation $x\mapsto\eta\,x$, $\eta$ $\in$ $\mathbb{R}-\{0\}$,
and further rearranging $\psi$ according to $\psi\mapsto\psi/\eta$,
one gets the nonlinear partial differential equation 
\[
z_{2t}=D_{x}\psi
\]
which describes \textbf{pss} with associated 1-forms 
\[
\begin{array}{l}
\omega_{1}=\left(m\,\eta\,z_{1}+n\right)\,dt\,,\vspace{5pt}\\
\omega_{2}=\eta\,z_{2}\,dx+\eta\,\left(\psi\mp{\displaystyle \frac{m\,\eta}{2}z_{1}^{2}}\mp\,n\,z_{1}\right)\,dt\,,\vspace{5pt}\\
\omega_{3}=\pm\,\omega_{2}-\,m\,dt,
\end{array}
\]
where $\psi=\psi(z,z_{1},z_{2})$ is a real differentiable function
and $m,n\in\mathbb{R}$ are such that $m^{2}+n^{2}\neq0$. In
this example, one can check that $\eta$ is a free parameter that
cannot be removed by gauge transformations. 
\end{example}

\medskip{}

\begin{example}
In Theorem \ref{teo.clas.3.2*} -- (II), by choosing $\alpha=0$
and $\beta=\pm\sqrt{r-\eta^{2}}$, with $r,\eta\in\mathbb{R}$ such
that $r-\eta^{2}>0$, one gets the nonlinear partial differential
equation 
\begin{equation}
z_{2t}=\mp D_{x}^{3}h-D_{x}\left(\left(z_{2}\mp r\right)\,h\right)-z_{2}\,D_{x}\left(h\right)\label{eq_altCHR}
\end{equation}
which describes \textbf{pss} with associated 1-forms{\small{}{}{}{}{}{}{}
\begin{equation}
\begin{array}{l}
\omega_{1}=\eta\,dx+\left(\mp D_{x}h-\eta\,h\right)\,dt\,,\vspace{5pt}\\
\omega_{2}={\displaystyle \frac{-z_{2}\pm\left(r-\eta^{2}\right)}{\sqrt{r-\eta^{2}}}}\,dx+{\displaystyle \frac{\pm D_{x}^{2}h+\eta\,D_{x}h+\left(z_{2}\,\pm\left(r-\eta^{2}\right)\right)\,h}{\sqrt{r-\eta^{2}}}}\,dt\,,\vspace{5pt}\\
\omega_{3}=\mp{\displaystyle \frac{z_{2}}{\sqrt{r-\eta^{2}}}}\,dx+{\displaystyle \frac{D_{x}^{2}h\pm\eta\,D_{x}h\pm z_{2}\,h}{\sqrt{r-\eta^{2}}}}\,dt,
\end{array}\label{eq_omega_alt_CHr}
\end{equation}
with $h=h(z)$} a differentiable function such that $h'\neq0$. 

In particular, if $h=z+m$ equation (\ref{eq_altCHR})
reduces to an equation that, up to a translation of $z$ by a constant,
has the form of CH-r equation \citep{DGH, RaSc} with no term in $z_{t}$.
Also, one can check that $\eta$ in
(\ref{eq_omega_alt_CHr}) is a free parameter that cannot be removed
by gauge transformations. 
\end{example}

\medskip{}

\begin{example}
In Theorem \ref{teo.clas.3.3*} -- (I), by choosing $\psi(z,z_{1},z_{2})=\alpha\,D_{x}(z\,z_{1})+\beta\,z$,
$\alpha,\beta\in\mathbb{R}$, $\alpha^{2}+\beta^{2}\neq0$, one gets
the nonlinear partial differential equation 
\begin{equation}
z_{2t}=\alpha\,z\,z_{3}+3\,\alpha\,z_{1}\,z_{2}+\beta\,z_{1}\label{eq_Kraenkel}
\end{equation}
which describes \textbf{pss} or \textbf{ss} with associated 1-forms{\small{}{}{}{}{}{}{}
\[
\begin{array}{l}
\omega_{1}={\displaystyle -}\phi'\,dt\,,\vspace{8pt}\\
\omega_{2}={\displaystyle z_{2}\,dx+\left(\alpha\,z\,z_{2}+\alpha\,z_{1}^{2}+\beta\,z\pm r\,\phi\right)\,dt\,},\vspace{8pt}\\
\omega_{3}={\displaystyle \pm r\,z_{2}\,dx+\left[\pm r\,(\alpha\,z\,z_{2}+\alpha\,z_{1}^{2}+\beta\,z)+\delta\,\phi\right]\,dt}
\end{array}
\]
}where $r\in\mathbb{R}$ and $\phi=\phi(z_{1})$ is a real differentiable
function satisfying $\phi''=-\left(r^{2}-\delta\right)\,\phi$, $\phi\neq0$,
i.e., 
\begin{equation}
\phi=\left\{ \begin{array}{lc}
a\,\cos\left(\sqrt{r^{2}-\delta}\,z_{1}\right)+b\,\sin\left(\sqrt{r^{2}-\delta}\,z_{1}\right),\vspace{7pt}\qquad & {\displaystyle r^{2}-\delta}>0,\\
a\,\cosh\left(\sqrt{-r^{2}+\delta}\,z_{1}\right)+b\,\sinh\left(\sqrt{-r^{2}+\delta}\,z_{1}\right),\qquad & r^{2}-\delta<0,
\end{array}\right.\label{phi_th3.3}
\end{equation}
with $a,b\in\mathbb{R}$, $a^{2}+b^{2}\neq0$.

We note here that, a particular instance of (\ref{eq_Kraenkel}) is
the equation 
\[
2\sqrt{\frac{k}{g}}z_{2t}=k^{2}z_{x}-\frac{3}{2}k\left(zz_{1}\right)_{xx}\,,
\]
where $k,g\in\mathbb{R}-\{0\}$, $k/g>0$, that describes surface
waves in deep water \citep{K.et.al}. In this example,
one can check that $r$ is a free parameter that cannot be removed
by gauge transformations. 
\end{example}

\medskip{}

\begin{example}
Likewise equation (\ref{eq_Kraenkel}), any equation of the form (I)
in Theorem \ref{teo.clas.3.3*}, can be rewritten as 
\begin{eqnarray}
D_{x}\left(z_{1t}-\psi(z,z_{1},z_{2})\right)=0,\label{eq_th3.3*}
\end{eqnarray}
and hence belongs to the system of differential consequences of 
\begin{eqnarray}
z_{1t}-\psi(z,z_{1},z_{2})=0.\label{general.eq}
\end{eqnarray}
It is noteworthy to remark here that the $1$-forms 
\begin{equation}
\begin{array}{l}
\omega_{1}={\displaystyle -}\phi'\,dt\,,\vspace{8pt}\\
\omega_{2}={\displaystyle z_{2}\,dx+\left(\psi\pm r\,\phi\right)\,dt\,},\vspace{8pt}\\
\omega_{3}={\displaystyle \pm r\,z_{2}\,dx+\left(\pm r\,\psi+\delta\,\phi\right)\,dt},
\end{array}\label{forme_th3.3}
\end{equation}
provided by Theorem \ref{teo.clas.3.3*} -- (I), with $\phi$ given
by (\ref{phi_th3.3}), satisfy the structure equations (\ref{struttura})
if and only if $z$ is a solution of (\ref{eq_th3.3*}). Hence it
is not true that equation (\ref{general.eq}) describes \textbf{pss}
or \textbf{ss} with associated $1$-forms (\ref{forme_th3.3}). However,
$\Omega=Xdx+Tdt$ satisfy the zero-curvature condition modulo a differential
consequence of (\ref{general.eq}), hence the $1$-forms (\ref{forme_th3.3})
determine a ZCR (or linear problem) for (\ref{general.eq}).

Here are other instances of \eqref{general.eq} that admit a ZCR $\Omega=Xdx+Tdt$
provided by (\ref{forme_th3.3}): 
\begin{enumerate}
\item[(i)] if $\psi(z,z_{1},z_{2})=-z\,z_{2}-\Phi(z_{1})$, then one gets the
Calogero \citep{Cal} equation 
\begin{equation}
z_{1t}+z\,z_{2}+\Phi(z_{1})=0.\label{Calogero.eq}
\end{equation}
\item[(ii)] if $\psi(z,z_{1},z_{2})=-z\,z_{2}-\frac{1}{2}z_{1}^{2}$ (or $\Phi(z_{1})=\frac{1}{2}z_{1}^{2}$
in Calogero equation), then one gets the Hunter-Saxton \citep{RaSc}
equation 
\[
z_{1t}+z\,z_{2}+\frac{1}{2}z_{1}^{2}=0.
\]
\item[(iii)] if $\psi(z,z_{1},z_{2})=F(z)$, then one gets the following celebrated
equations 
\begin{eqnarray*}
\begin{array}{llc}
 & z_{1t}-e^{z}+e^{-2\,z}=0, & \qquad\textnormal{Tzitz\' eica \citep{TI}}\vspace*{0.3cm}\\
 & z_{1t}+e^{-z}-e^{2\,z}=0, & \qquad\textnormal{Bullough--Dodd--Zhiber--Shabat \citep{Z-S.eq}}\vspace*{0.3cm}\\
 & z_{1t}+e^{z}+e^{-2\,z}=0, & \qquad\textnormal{Dodd--Bullough--Mikhailov \citep{Waz}}\vspace*{0.3cm}\\
 & z_{1t}-e^{-z}-e^{-2\,z}=0. & \qquad\textnormal{Tzitz\' eica--Dodd--Bullough \citep{Waz}}
\end{array}
\end{eqnarray*}
We remark here that Dodd and Bullough \citep{DB} referred to $z_{1t}=F(z)$
as a generalized sine--Gordon equation, which includes the classical
sine--Gordon $z_{1t}=\sin(z)$, the multiple type sine--Gordon $z_{1t}=\sin(z)+\frac{1}{2}\,\sin(\frac{z}{2})$
and the Liouville $z_{1t}=e^{z}$ equations. 
\item[(iv)] if $\psi(z,z_{1},z_{2})=z+\dfrac{1}{6}\,(z^{3})_{,xx}$, then one
gets the Rabelo \citep{R} equation 
\[
z_{1t}-z-\dfrac{1}{6}\,(z^{3})_{,xx}=0,
\]
which appeared in \citep{SW} describing the propagation of ultra-short
light pulses in silica optical fibers (see algo \citep{Sak}). 
\end{enumerate}
\end{example}

\medskip{}

\begin{example}
It is quite interesting that in the cases (I) of Theorems \ref{teo.clas.3.5}
and \ref{teo.clas.3.5*} a wise inspection leads to a system of $1$-forms
$\omega_{1},\omega_{2},\omega_{3}$ that unify the two cases and describe
equations of the form $\nu\,z_{t}-\lambda\,z_{2t}=A\,z_{3}+B$ with
$\nu$ ranging in the whole $\mathbb{R}$.

Indeed, by introducing the auxiliary variable $w$ such that $w_{1}=z$
(equivalently, $w=\int z\,dx$), one obtains the nonlinear partial
differential equation 
\begin{equation}
\nu\,z_{t}-\lambda\,z_{2t}=D_{x}^{2}\left(\varphi-\frac{r}{\eta^{2}}\,e^{\lambda z_{1}-\nu\,w}\right)-D_{x}\left(\left(\nu\,z-\lambda z_{2}\right)\,\varphi\right)+r\,e^{\lambda z_{1}-\nu\,w}\label{eq_ex_nonlocal}
\end{equation}
which describes \textbf{pss} with associated 1-forms 
\begin{equation}
\begin{array}{l}
\omega_{1}=\eta\,\left(dx-\varphi\,dt\right)\,,\vspace{5pt}\\
\\
\omega_{2}=\left({\displaystyle -\rho}\,(\nu\,z-\lambda\,z_{2})\pm\eta\,\sqrt{\rho^{2}-1}\right)\,dx\\
\hspace*{3cm}+\left[{\displaystyle -\rho}D_{x}\left(\varphi\right)+\left(\rho\,\left(\nu\,z-\lambda z_{2}\right)\mp\eta\,\sqrt{\rho^{2}-1}\right)\,\left(\varphi-{\displaystyle \frac{r}{\eta^{2}}}e^{\lambda z_{1}-\nu\,w}\right)\right]\,dt\,,\vspace{5pt}\\
\\
\omega_{3}=\left({\displaystyle \mp\sqrt{\rho^{2}-1}}\,(\nu\,z-\lambda\,z_{2})+\rho\,\eta\right)\,dx\\
\hspace*{3cm}+\left[{\displaystyle \mp\sqrt{\rho^{2}-1}}\,D_{x}\left(\varphi\right)+\left(\pm{\displaystyle \sqrt{\rho^{2}-1}}\,\left(\nu\,z-\lambda z_{2}\right)-\eta\,\rho\right)\,\left(\varphi-{\displaystyle \frac{r}{\eta^{2}}}e^{\lambda z_{1}-\nu\,w}\right)\right]\,dt\,,
\end{array}\label{eq_ex_nonlocal-1}
\end{equation}
where $\eta,\rho,r\in\mathbb{R}$, $\eta\neq0$, $\rho^{2}-1\geq0$
and $\varphi=\varphi(z,w,z_{1})$ is a differentiable function such
that $D_{x}\varphi\neq0$.

Thus Theorems \ref{teo.clas.3.5} and \ref{teo.clas.3.5*} describe
particular cases of a most general class, whose description requires
the introduction of $w$, when $\nu\neq0$. Hence, since $w=\int z\,dx$,
when $\nu\neq0$ equation (\ref{eq_ex_nonlocal}) is an integrodifferential
equation.

An interesting instance of this class results from the choice 
\[
\rho=\frac{r}{2\,\eta^{2}},\qquad\varphi=G+\frac{r\,e^{\lambda z_{1}-\nu\,w}+e^{-\lambda z_{1}+\nu\,w}}{2\,\eta^{2}}
\]
where $r\in\mathbb{R}-\{0\}$ and $G=G(z,w,z_{1})$ is a differentiable
function, that does not depend on $\eta$. Indeed, in this case one
gets the equation 
\[
\nu\,z_{t}-\lambda\,z_{2t}=D_{x}^{2}G-D_{x}\left(\left(\nu\,z-\lambda z_{2}\right)\,G\right)+r\,e^{\lambda z_{1}-\nu\,w}
\]
with the associated $1$-forms 
\[
\begin{array}{l}
\omega_{1}=\eta\,dx-\left({\displaystyle \frac{2\,\eta^{2}G+r{\rm e}^{\lambda\,z_{1}-\nu\,w}+{\rm e}^{-\lambda\,z_{1}+\nu\,w}}{2\eta}}\right)\,dt\,,\vspace{5pt}\\
\\
\omega_{2}=\left({\displaystyle -\frac{r}{2\eta^{2}}}\,(\nu\,z-\lambda\,z_{2})\pm{\displaystyle \frac{1}{2\eta}}\,\sqrt{r^{2}-4\eta^{4}}\right)\,dx\vspace{3pt}\\
\hspace*{3cm}+\left[{\displaystyle \frac{\mp\,\left(2\eta^{2}G-r{\rm e}^{\lambda\,z_{1}-\nu\,w}+{\rm e}^{-\lambda\,z_{1}+\nu\,w}\right)\sqrt{r^{2}-4\eta^{4}}+2\,\eta\,r\,\left((\nu\,z-\lambda\,z_{2})G-D_{x}G\right)}{4\eta^{3}}}\right]\,dt\,,\vspace{5pt}\\
\\
\omega_{3}=\left(\mp{\displaystyle \frac{1}{2\eta^{2}}}\,(\nu\,z-\lambda\,z_{2})\sqrt{r^{2}-4\eta^{4}}{\displaystyle +\frac{r}{2\eta}}\right)\,dx\vspace{3pt}\\
\hspace*{3cm}+\left[{\displaystyle \frac{\pm\,2\,\eta\,\left((\nu\,z-\lambda\,z_{2})G-D_{x}G\right)\sqrt{r^{2}-4\eta^{4}}-2\eta^{2}\,r\,G+r^{2}{\rm e}^{\lambda\,z_{1}-\nu\,w}-r{\rm e}^{-\lambda\,z_{1}+\nu\,w}}{4\eta^{3}}}\right]\,dt\,,
\end{array}
\]

where $\eta$ is a free parameter that cannot be removed by gauge
transformations.

As an example, by taking $G=-h\,z_{1}$, $h\in\mathbb{R}$, one gets
the equation 
\[
\nu\,z_{t}-\lambda\,z_{2t}=-h\,\left(\lambda\left(z_{1}\,z_{3}+z_{2}^{2}\right)-\nu\,z\,z_{2}-\nu\,z_{1}^{2}+z_{3}\right)+r\,e^{\lambda z_{1}-\nu\,w}.
\]
Notice that for $h=0,\nu=0,\lambda=-1,r=1$ this equation reduces
to $z_{2t}=e^{z_{1}}$, which is linked to the Liouville equation
$\tilde{z}_{12}=e^{\tilde{z}}$ by differential substitution $\tilde{z}=z_{1}$. 
\end{example}

\begin{rem}
According to the literature (see for instance \citep{KrV}) variables
like $w$, used in (\ref{eq_ex_nonlocal}-\ref{eq_ex_nonlocal-1})
and defined by means of a first order differential equation for $w$,
can be referred to as \textit{non-local variables}. 
\end{rem}

\medskip{}
 {\small{}{}{}{}{}{}{}}{\small\par}

\section{Proof of the main results for equations of the form $z_{t}-\lambda\,z_{2t}=A\,z_{3}+B$
\label{sec4}}

\noindent This section is devoted to the proofs of Theorems \ref{teo.clas.3.2},
\ref{teo.clas.3.3} and \ref{teo.clas.3.5}.

\subsection{Preliminary lemma}

We start with a lemma that gives a convenient characterization of
equations (\ref{eq:C-1}) describing \textbf{pss} or \textbf{ss} ($\delta=1$
or $\delta=-1,$ respectively) under assumptions \textbf{(i-ii)}.

\medskip{}

\begin{lem}
\label{lema1} A partial differential equation of the form \eqref{eq:C-1}
describes \textbf{pss} or \textbf{ss} ($\delta=1$ or $\delta=-1,$
respectively), with corresponding functions $f_{ij}=f_{ij}(z,z_{1},...,z_{k})$
satisfying \textbf{(i-ii)}, if and only if 
\begin{equation}
f_{11}=\eta,\quad\quad f_{p1}=\mu_{p}\,z_{2}+\sigma_{p}\,z+\alpha_{p},\quad2\leq p\leq3,\label{fij}
\end{equation}

\begin{equation}
\begin{array}{lcccc}
f_{12}=f_{12}(z,z_{1}),\qquad & \mu_{p}=-\lambda\sigma_{p},\quad & f_{p2}=f_{p2}(z,z_{1},z_{2}),\quad & p=2,3,\end{array}\label{7.2-2.3.1-cond_A}
\end{equation}
\begin{eqnarray}
(\sigma_{2}^{2}+\sigma_{3}^{2})\,A=\sigma_{2}\,f_{22,z_{2}}+\sigma_{3}\,f_{32,z_{2}}\,,\label{A001}
\end{eqnarray}
\begin{eqnarray}
\sigma_{3}f_{22}-\sigma_{2}f_{32} & = & \varphi,\label{first.f22.f32}\\
\alpha_{3}\,f_{22}-\alpha_{2}\,f_{32} & = & {\displaystyle \sum_{i=0}^{1}z_{i+1}f_{12,z_{i}}-(z-\lambda z_{2})\,\varphi,\label{sec.f22.f32}}
\end{eqnarray}
\begin{eqnarray}
(\sigma_{2}^{2}+\sigma_{3}^{2})\,B={\displaystyle \sum_{i=0}^{1}z_{i+1}(\sigma_{2}\,f_{22}+\sigma_{3}\,f_{32})_{,z_{i}}-\eta(\sigma_{2}\,f_{32}+\delta\sigma_{3}\,f_{22})\qquad\qquad\qquad\nonumber}\\
+[\sigma_{2}\sigma_{3}(1+\delta)(z-\lambda z_{2})+\sigma_{2}\alpha_{3}+\delta\sigma_{3}\alpha_{2}]\,f_{12},\label{B001}\\
\eta(\sigma_{3}\,f_{32}-\delta\sigma_{2}\,f_{22})={\displaystyle \sum_{i=0}^{1}z_{i+1}\,\varphi_{,z_{i}}+[(\sigma_{3}^{2}-\delta\sigma_{2}^{2})(z-\lambda z_{2})+\sigma_{3}\alpha_{3}-\delta\sigma_{2}\alpha_{2}]\,f_{12},\label{last.eq.f22.f32}}
\end{eqnarray}
where $\eta,\mu_{p},\sigma_{p},\alpha_{p}\in\mathbb{R}$, $\sigma_{2}^{2}+\sigma_{3}^{2}\neq0$,
and $\varphi=\varphi(z,z_{1})$ is a real and differentiable function
of $z$ and $z_{1}$. Furthermore, 
\begin{eqnarray}
\eta f_{22}-[\sigma_{2}(z-\lambda z_{2})+\alpha_{2}]\,f_{12}\neq0.\label{7.5.1}
\end{eqnarray}
\end{lem}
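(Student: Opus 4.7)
The plan is to expand the structure equations \eqref{struttura} componentwise in the basis $dx\wedge dt$ and to analyze, on solutions of \eqref{eq:C-1}, which relations among the $f_{ij}$ are necessary and sufficient. With $\omega_i = f_{i1}\,dx+f_{i2}\,dt$, the structure equations become
\begin{align*}
D_x f_{12}-D_t f_{11} &= f_{31}f_{22}-f_{32}f_{21},\\
D_x f_{22}-D_t f_{21} &= f_{11}f_{32}-f_{12}f_{31},\\
D_x f_{32}-D_t f_{31} &= \delta\bigl(f_{11}f_{22}-f_{12}f_{21}\bigr).
\end{align*}
I would substitute $f_{11}=\eta$ and $f_{p1}=\mu_p z_2+\sigma_p z+\alpha_p$ from (i)--(ii), and use \eqref{eq:C-1} in the form $z_t=\lambda z_{2t}+Az_3+B$ to rewrite $D_t f_{p1}=(\mu_p+\lambda\sigma_p)\,z_{2t}+\sigma_p(Az_3+B)$.

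The crucial step is to extract information by matching coefficients of jet variables that do not appear elsewhere. Since $z_{2t}$ enters nowhere on the right-hand sides of the structure equations, its coefficient must vanish, forcing $\mu_p=-\lambda\sigma_p$. Similarly, since no derivative above $z_2$ appears on the right of any structure equation, $f_{12}$ cannot depend on $z_2$ or higher, giving $f_{12}=f_{12}(z,z_1)$, and analogously $f_{p2}=f_{p2}(z,z_1,z_2)$ for $p=2,3$; this establishes \eqref{7.2-2.3.1-cond_A}. Matching the coefficient of $z_3$ in the second and third structure equations yields $f_{22,z_2}=\sigma_2 A$ and $f_{32,z_2}=\sigma_3 A$, whose linear combination $\sigma_2(\cdot)+\sigma_3(\cdot)$ is precisely \eqref{A001}.

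Next I would introduce $\varphi:=\sigma_3 f_{22}-\sigma_2 f_{32}$, which is \eqref{first.f22.f32}. Differentiating in $z_2$ and using $f_{22,z_2}=\sigma_2 A$, $f_{32,z_2}=\sigma_3 A$ yields $\varphi_{,z_2}=0$, so $\varphi=\varphi(z,z_1)$. Setting $u:=z-\lambda z_2$, the first structure equation reads
\[
D_x f_{12} \;=\; (\sigma_3 u+\alpha_3)f_{22}-(\sigma_2 u+\alpha_2)f_{32} \;=\; u\,\varphi + \bigl(\alpha_3 f_{22}-\alpha_2 f_{32}\bigr),
\]
which rearranges to \eqref{sec.f22.f32}. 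Reading the non-$z_3$ part of the second and third structure equations and forming the combinations $\sigma_2(\text{2nd})+\sigma_3(\text{3rd})$ and $\sigma_2(\text{3rd})-\sigma_3(\text{2nd})$ produces \eqref{B001} and \eqref{last.eq.f22.f32}, respectively. Condition \eqref{7.5.1} is then a direct transcription of the nondegeneracy \eqref{eq:nondeg_cond} under the present form of $f_{11}$ and $f_{21}$, while $\sigma_2^2+\sigma_3^2\neq0$ must hold because otherwise \eqref{A001} and \eqref{B001} would force $A=B=0$, contradicting $A^2+B^2\neq0$.

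For the converse direction I would just verify that, given all the relations \eqref{fij}--\eqref{7.5.1}, the three structure equations hold identically modulo \eqref{eq:C-1}: with $\varphi$ now a prescribed function of $(z,z_1)$ and $f_{p2,z_2}=\sigma_p A$ imposed through \eqref{A001}, the identities \eqref{sec.f22.f32} and \eqref{B001}--\eqref{last.eq.f22.f32} are exactly the three scalar structure equations rewritten in the $(\sigma_2,\sigma_3)$-adapted basis. The only real obstacle is the bookkeeping needed to check that the six relations \eqref{A001}--\eqref{last.eq.f22.f32} together with \eqref{7.2-2.3.1-cond_A} are equivalent to the three structure equations once the substitutions $u=z-\lambda z_2$ and $\varphi=\sigma_3 f_{22}-\sigma_2 f_{32}$ are made; after this change of variables the passage in both directions is linear algebra in the unknowns $(f_{22},f_{32})$ with coefficients $(\sigma_2,\sigma_3)$.
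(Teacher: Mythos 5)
Your proposal follows essentially the same route as the paper's proof of Lemma \ref{lema1}: expand the structure equations on solutions of \eqref{eq:C-1}, annihilate the coefficients of $z_{2t}$, of $z_{3}$ and of the higher jet variables to get $\mu_{p}=-\lambda\sigma_{p}$, the dependency restrictions and $f_{p2,z_{2}}=\sigma_{p}A$, then introduce $\varphi=\sigma_{3}f_{22}-\sigma_{2}f_{32}$ (independent of $z_{2}$) and take the $(\sigma_{2},\sigma_{3})$ linear combinations of the two remaining scalar equations to obtain \eqref{B001} and \eqref{last.eq.f22.f32}; the converse is the same routine verification.

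The one step whose justification does not work as written is $\sigma_{2}^{2}+\sigma_{3}^{2}\neq0$. If $\sigma_{2}=\sigma_{3}=0$, then \eqref{A001} and \eqref{B001} read $0=0$ (both sides carry the overall factors $\sigma_{2},\sigma_{3}$), so they force nothing about $A$ and $B$ --- in particular they do not force $A=B=0$ as you claim. The correct reason, which is the one the paper uses, is that in this degenerate case $A$ and $B$ disappear entirely from the reduced structure equations (they only ever enter through the terms $\sigma_{p}(A\,z_{3}+B)$ coming from $D_{t}f_{p1}$), so the structure equations become identities in the $f_{ij}$ alone and cannot be equivalent to a PDE with $A^{2}+B^{2}\neq0$. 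The fix is immediate, but as stated your inference is inverted.
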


\begin{proof}
Let $z=z(x,t)$ be a function satisfying ($\ref{eq:C-1}$). Then
\begin{eqnarray}
 &  & dz\wedge dx=-(A\,z_{3}+B)dx\wedge dt+\lambda\,dz_{2}\wedge dx,\label{(7.6)}\\
 &  & dz_{i}\wedge dt=z_{i+1}dx\wedge dt,\quad0\leq i\leq m.\nonumber 
\end{eqnarray}
Since the 1-forms $\omega_{i}$ satisfy the structure equations ($\ref{struttura}$),
one gets 
\begin{eqnarray}
df_{11}\wedge dx+df_{12}\wedge dt+(f_{32}f_{21}-f_{31}f_{22})dx\wedge dt=0,\nonumber \\
df_{21}\wedge dx+df_{22}\wedge dt+(-f_{11}f_{32}+f_{31}f_{12})dx\wedge dt=0,\label{(7.7-a)}\\
df_{31}\wedge dx+df_{32}\wedge dt+\delta(f_{21}f_{12}-f_{11}f_{22})dx\wedge dt=0.\nonumber 
\end{eqnarray}
Then, by substituting 
\[
df_{ij}={\displaystyle \sum_{k=0}^{m}f_{ij,z_{k}}dz_{k},}
\]
in ($\ref{(7.7-a)}$), using ($\ref{(7.6)}$) and equating to zero
the coefficients of the linearly independent coordinate 2-forms, one
obtains the conditions 
\begin{eqnarray}
f_{i1,z_{1}}=0,\quad f_{i1,z_{s}}=0,\quad\lambda\,f_{i1,z}+f_{i1,z_{2}}=0,\quad f_{i2,z_{s}}=0,\quad1\leq i\leq3,\quad s\geq3,\label{3.5.1}
\end{eqnarray}
and also the system 
\begin{eqnarray}
-f_{11,z}(A\,z_{3}+B)+{\displaystyle \sum_{i=0}^{2}z_{i+1}f_{12,z_{i}}+f_{32}f_{21}-f_{31}f_{22}=0,}\nonumber \\
-f_{21,z}(A\,z_{3}+B)+{\displaystyle \sum_{i=0}^{2}z_{i+1}f_{22,z_{i}}-f_{11}f_{32}+f_{31}f_{12}=0,}\label{(7.8)}\\
-f_{31,z}(A\,z_{3}+B)+{\displaystyle \sum_{i=0}^{2}z_{i+1}f_{32,z_{i}}+\delta(f_{21}f_{12}-f_{11}f_{22})=0.}\nonumber 
\end{eqnarray}
Then, from hypothesis \textbf{(i)-(ii)} one has that $f_{11}=\eta$
and there exist constants $\mu_{p}$, $\sigma_{p}$, $\alpha_{p}$
$\in$ $\mathbb{R}$, $2\leq p\leq3$, such that 
\begin{eqnarray}
f_{p1}=\mu_{p}\,z_{2}+\sigma_{p}\,z+\alpha_{p}\label{(i)-(ii)}
\end{eqnarray}
as in \eqref{fij}. Thus, the third equation of \eqref{3.5.1} together
with \eqref{(i)-(ii)} imply that 
\begin{eqnarray}
0=\lambda\,f_{p1,z}+f_{p1,z_{2}}=\lambda\,\sigma_{p}+\mu_{p},\quad p=2,3.\label{mu.sigma.cond}
\end{eqnarray}
Furthermore, by taking the derivative with respect to $z_{3}$ of
each equation in \eqref{(7.8)}, and taking into account (\ref{(i)-(ii)}),
one gets 
\begin{eqnarray}
f_{12,z_{2}}=0,\quad-\sigma_{p}\,A+f_{p2,z_{2}}=0,\quad p=2,3,\label{(7.9)}
\end{eqnarray}
and ($\ref{(7.8)}$) reduces to 
\begin{eqnarray}
{\displaystyle \sum_{i=0}^{1}z_{i+1}f_{12,z_{i}}+[\sigma_{2}(z-\lambda z_{2})+\alpha_{2}]\,f_{32}-[\sigma_{3}(z-\lambda z_{2})+\alpha_{3}]\,f_{22}=0,\label{(7.3)}}\\
-\sigma_{2}\,B+{\displaystyle \sum_{i=0}^{1}z_{i+1}f_{22,z_{i}}-\eta f_{32}+[\sigma_{3}(z-\lambda z_{2})+\alpha_{3}]\,f_{12}=0,}\label{(7.4)}\\
-\sigma_{3}\,B+{\displaystyle \sum_{i=0}^{1}z_{i+1}f_{32,z_{i}}+\delta\left\lbrace [\sigma_{2}(z-\lambda z_{2})+\alpha_{2}]f_{12}-\eta f_{22}\right\rbrace =0.}\label{(7.5)}
\end{eqnarray}
Now, from the first equation of ($\ref{(7.9)}$), one gets $f_{12}=f_{12}(z,z_{1})$.
Therefore, in view of \eqref{3.5.1} and \eqref{mu.sigma.cond} one
obtains the second and third equations of \eqref{7.2-2.3.1-cond_A}.
On the other hand, since $A^{2}+B^{2}\neq0$ on a nonempty open subset
and the structure equations \eqref{struttura} for equation \eqref{eq:C-1}
have been reduced to the second identity of \eqref{(7.9)} and equations
\eqref{(7.3)}-\eqref{(7.5)}, one necessarily has $\sigma_{2}^{2}+\sigma_{3}^{2}\neq0$.

Thus, the second identity of \eqref{(7.9)} now reads 
\begin{eqnarray*}
-\sigma_{2}\,A+f_{22,z_{2}}=0,\\
-\sigma_{3}\,A+f_{32,z_{2}}=0.
\end{eqnarray*}
and is equivalent to the system \eqref{A001}-\eqref{first.f22.f32},
where $\varphi=\varphi(z,z_{1})$ is a differentiable function. Indeed,
\eqref{A001} is obtained by adding the first equation multiplied
by $\sigma_{2}$ with the second one multiplied by $\sigma_{3}$,
whereas \eqref{first.f22.f32} readily follows by adding the first
equation multiplied by $\sigma_{3}$ with the second equation multiplied
by $-\sigma_{2}$ and further integrating the resulting equation with
respect to $z_{2}$. Then, by substituting \eqref{first.f22.f32}
into \eqref{(7.3)} one also gets \eqref{sec.f22.f32}.

Moreover, by multiplying \eqref{(7.4)} by $\sigma_{2}$ and adding
with \eqref{(7.5)} multiplied by $\sigma_{3}$ one obtains \eqref{B001}.
Analogously, by multiplying \eqref{(7.4)} by $\sigma_{3}$ and adding
with \eqref{(7.5)} multiplied by $-\sigma_{2}$, equation \eqref{first.f22.f32}
leads to \eqref{last.eq.f22.f32}. Finally, \eqref{7.5.1} follows
by $\omega_{1}\wedge\omega_{2}\neq0$.

The converse is a straightforward computation. This concludes the
proof of Lemma \ref{lema1}. 
\end{proof}
\medskip{}

\subsection{Proof of Theorem \ref{teo.clas.3.2}}

Following the notations used in Lemma \ref{lema1}, we will consider
here $\left(f_{31,z}\right)^{2}-\delta\left(f_{21,z}\right)^{2}=\sigma_{3}^{2}-\delta\sigma_{2}^{2}=0$,
with either $\eta=0$ or $\eta\neq0$.

\subsubsection{Case $\eta=0$}

By Lemma \ref{lema1} we need to determine the functions $f_{ij}$
that satisfy equations \eqref{fij}-\eqref{7.5.1}. First, in view
of $\sigma_{3}^{2}+\sigma_{2}^{2}\neq0$ (see Lemma \ref{lema1}),
by $\sigma_{3}^{2}-\delta\sigma_{2}^{2}=0$ we immediately have $\delta=1$
and $\sigma_{3}=\pm\sigma_{2}$. Then condition $\sigma_{3}^{2}+\sigma_{2}^{2}\neq0$
leads to $\sigma_{2}\neq0$ and, in view of \eqref{first.f22.f32},
one gets 
\begin{eqnarray}
f_{32}=\pm f_{22}-\dfrac{\varphi}{\sigma_{2}},\label{g32.teo3.2}
\end{eqnarray}
which replaced in \eqref{sec.f22.f32} gives 
\begin{eqnarray}
(\alpha_{3}\mp\alpha_{2})f_{22}={\displaystyle \sum_{i=0}^{1}z_{i+1}f_{12,z_{i}}-(z-\lambda z_{2})\,\varphi-\frac{\alpha_{2}}{\sigma_{2}}\varphi.}\label{4.14}
\end{eqnarray}
Furthermore, \eqref{last.eq.f22.f32} is equivalent to the system
of the following two equations 
\begin{eqnarray}
\varphi_{,z_{1}}=0,\quad z_{1}\,\varphi_{,z}\pm\sigma_{2}(\alpha_{3}\mp\alpha_{2})\,f_{12}=0.\label{4.15}
\end{eqnarray}

From now on our analysis will be divided into two cases:\textbf{ (I)}
$\alpha_{3}\mp\alpha_{2}=0$;\textbf{ (II)} $\alpha_{3}\mp\alpha_{2}\neq0$.

\textbf{Case (I) $\boxed{\alpha_{3}\mp\alpha_{2}=0}$.} In this case
\eqref{4.15} leads to $\varphi$ constant. Then \eqref{4.14} is
equivalent to $f_{12}=n$ constant and $\varphi=0$. On the other
hand \eqref{mu.sigma.cond} implies $n\neq0$. Thus, by replacing
$\varphi=0$ and $f_{22}:=\sigma_{2}\psi$, with $\psi=\psi(z,z_{1},z_{2})$
being a differentiable function, in \eqref{g32.teo3.2} and taking
into account \eqref{A001} and \eqref{B001} with $\eta=0$, one obtains
the partial differential equation 
\[
z_{t}-\lambda\,z_{2t}=D_{x}\psi\pm\,n\,(z-\lambda\,z_{2})\pm\frac{\alpha_{2}}{\sigma_{2}}\,n\,,
\]
with $n\,\sigma_{2}\neq0$, together with the corresponding functions
\[
\begin{array}{ll}
 & f_{11}=0,\vspace*{0.3cm}\\
 & f_{21}=\sigma_{2}(z-\lambda\,z_{2})+\alpha_{2},\vspace*{0.3cm}\\
 & f_{31}=\pm\,\sigma_{2}(z-\lambda\,z_{2})\pm\alpha_{2},
\end{array}\hspace*{1cm}\begin{array}{ll}
f_{12}=n,\vspace*{0.5cm}\\
f_{22}=\sigma_{2}\,\psi,\vspace*{0.5cm}\\
f_{32}=\pm\,\sigma_{2}\,\psi.
\end{array}
\]
Then, by performing a transformation of the form 
\[
t\mapsto\pm\,n\,t,\qquad z\mapsto\sigma_{2}\,\left(z+{\displaystyle \frac{\alpha_{2}}{\sigma_{2}}}\right),\qquad\psi\mapsto\pm{\displaystyle \frac{\sigma_{2}}{n}}\,\psi,
\]
one gets the differential equation describing \textbf{pss} given by
(I) and associated $1$-forms, with $\eta=0$, $a=0$, $b=1$.

\textbf{Case (II) $\boxed{\alpha_{3}\mp\alpha_{2}\neq0}$.} In this
case \eqref{4.15} leads to $\varphi(z,z_{1})=h$, with $h=h(z)$
a differentiable function, and 
\begin{eqnarray}
f_{12}=\mp\dfrac{h'}{\sigma_{2}(\alpha_{3}\mp\alpha_{2})}\,z_{1}.\label{4.16}
\end{eqnarray}
In particular, from \eqref{7.5.1} one gets that $h'\neq0$. Thus,
in view of \eqref{4.16}, \eqref{4.14},\eqref{g32.teo3.2}, \eqref{A001}
and \eqref{B001} one gets the partial differential equation

\begin{eqnarray*}
z_{t}-\lambda\,z_{2t}=\dfrac{\lambda\,h}{\sigma_{2}(\alpha_{3}\mp\alpha_{2})}z_{3}\mp\,D_{x}^{3}\left(\dfrac{h}{\sigma_{2}^{2}(\alpha_{3}\mp\alpha_{2})^{2}}\right)\hspace*{8cm}\\
\\
-2\left((\alpha_{3}\mp\alpha_{2})\sigma_{2}\,z\pm\dfrac{\alpha_{3}^{2}-\alpha_{2}^{2}}{2}-\lambda(\alpha_{3}\mp\alpha_{2})\sigma_{2}\,z_{2}\right)D_{x}\left(\dfrac{h}{\sigma_{2}^{2}(\alpha_{3}\mp\alpha_{2})^{2}}\right)-\frac{h}{\sigma_{2}(\alpha_{3}\mp\alpha_{2})}z_{1}.
\end{eqnarray*}
Furthermore, in view of \eqref{fij}, \eqref{g32.teo3.2}, \eqref{4.14}
and \eqref{4.16} one also gets the corresponding functions \vspace*{0.3cm}

\[
\begin{array}{ll}
 & f_{11}=0,\vspace*{0.3cm}\\
 & f_{21}=\sigma_{2}(z-\lambda z_{2})+\alpha_{2},\vspace*{0.3cm}\\
 & f_{31}=\pm\sigma_{2}(z-\lambda z_{2})+\alpha_{3},\vspace*{0.3cm}\\
 & f_{12}=\mp\dfrac{h'}{\sigma_{2}(\alpha_{3}\mp\alpha_{2})}\,z_{1},\vspace*{0.3cm}\\
 & f_{22}=\dfrac{1}{(\alpha_{3}\mp\alpha_{2})}\left(\mp\dfrac{h''}{\sigma_{2}(\alpha_{3}\mp\alpha_{2})}\,z_{1}^{2}\mp\dfrac{h'}{\sigma_{2}(\alpha_{3}\mp\alpha_{2})}\,z_{2}-(z-\lambda z_{2})\,h-\dfrac{\alpha_{2}}{\sigma_{2}}\,h\right),\vspace*{0.3cm}\\
 & f_{32}=\pm\dfrac{1}{(\alpha_{3}\mp\alpha_{2})}\left(\mp\dfrac{h''}{\sigma_{2}(\alpha_{3}\mp\alpha_{2})}\,z_{1}^{2}\mp\dfrac{h'}{\sigma_{2}(\alpha_{3}\mp\alpha_{2})}\,z_{2}-(z-\lambda z_{2})\,h\mp\dfrac{\alpha_{3}}{\sigma_{2}}\,h\right).
\end{array}\hspace*{1cm}
\]

\vspace*{0.3cm}

\noindent Then, by performing a transformation of the form 
\[
\begin{array}{l}
t\mapsto\left(\alpha_{3}\mp\alpha_{2}\right)\sigma_{2}\,t,\qquad z\mapsto\left(\alpha_{3}\mp\alpha_{2}\right)\sigma_{2}\,z\pm\left({\displaystyle \frac{\alpha_{3}^{2}-\alpha_{2}^{2}}{2}}\right),\qquad h\mapsto{\displaystyle \frac{h}{\sigma_{2}^{2}\left(\alpha_{3}\mp\alpha_{2}\right)^{2}}}\end{array},
\]
one gets the differential equation describing \textbf{pss} given by
(II) and associated $1$-forms, with $\eta=0$, $\alpha:=\alpha_{3}\mp\alpha_{2}$.

The converse is a straightforward computation.

\subsubsection{Case $\eta\protect\neq0$}

By Lemma \ref{lema1} we need to determine the functions $f_{ij}$
that satisfy equations \eqref{fij}-\eqref{7.5.1}. First, in view
of $\sigma_{3}^{2}-\delta\sigma_{2}^{2}=0$, we immediately have $\delta=1$
and $\sigma_{3}=\pm\sigma_{2}$. Then condition $\sigma_{3}^{2}+\sigma_{2}^{2}\neq0$
leads to $\sigma_{2}\neq0$, and in view of \eqref{first.f22.f32}
and \eqref{sec.f22.f32} one gets the following two conditions 
\begin{eqnarray}
 &  & f_{32}=\pm f_{22}-\dfrac{\varphi}{\sigma_{2}},\label{4.23}\\
 &  & (\alpha_{3}\mp\alpha_{2})\,f_{22}={\displaystyle \sum_{i=0}^{1}z_{i+1}\,f_{12,z_{i}}-(z-\lambda\,z_{2})\,\varphi-\dfrac{\alpha_{2}}{\sigma_{2}}\,\varphi.\label{4.24}}
\end{eqnarray}
Then, by substituting \eqref{4.23} in \eqref{last.eq.f22.f32} one
gets 
\begin{equation}
\mp\,\eta\,\varphi={\displaystyle \sum_{i=0}^{1}z_{i+1}\,\varphi_{,z_{i}}\pm\,\sigma_{2}(\alpha_{3}\mp\alpha_{2})\,f_{12},}\label{4.25}
\end{equation}
where, as shown above, $\varphi$ and $f_{12}$ are functions of $z$
and $z_{1}$. Thus, the derivative of \eqref{4.25} with respect to
$z_{2}$ leads to $\varphi_{,z_{1}}=0$ and replacing back to \eqref{4.25}
\begin{eqnarray}
\mp\,\eta\,h=z_{1}\,h_{,z}\pm\,\sigma_{2}\,(\alpha_{3}\mp\alpha_{2})\,f_{12},\label{4.26}
\end{eqnarray}
where $\varphi=h(z)$ is a differentiable function.

From now on our analysis will be divided into two cases:\textbf{ (I)}
$\alpha_{3}\mp\alpha_{2}=0$;\textbf{ (II)} $\alpha_{3}\mp\alpha_{2}\neq0$.

\textbf{Case (I) $\boxed{\alpha_{3}\mp\alpha_{2}=0}$.} In this case
\eqref{4.26} leads to $h=0$ and \eqref{4.24} to $f_{12}=n$ $\in$
$\mathbb{R}$. Then, by replacing \eqref{4.23} and $f_{22}:=\sigma_{2}\,\psi$,
with $\psi=\psi(z,z_{1},z_{2})$ being a differentiable function,
in \eqref{A001} and \eqref{B001}, one obtains the partial differential
equation 
\[
z_{t}-\lambda\,z_{2t}=D_{x}\psi\mp\eta\,\psi\pm\,n\,(z-\lambda\,z_{2})\pm\frac{n\,\alpha_{2}}{\sigma_{2}},
\]
together with the corresponding functions 
\[
\begin{array}{ll}
 & f_{11}=\eta,\vspace*{0.4cm}\\
 & f_{21}=\sigma_{2}(z-\lambda\,z_{2})+{\displaystyle \alpha_{2}},\vspace*{0.4cm}\\
 & f_{31}=\pm\,\sigma_{2}(z-\lambda\,z_{2})\pm\alpha_{2},
\end{array}\hspace*{1cm}\begin{array}{ll}
f_{12}=n,\vspace*{0.4cm}\\
f_{22}=\sigma_{2}\,\psi,\vspace*{0.4cm}\\
f_{32}=\pm\,\sigma_{2}\,\psi,
\end{array}
\]
where $\eta\psi-n\,(z-\lambda\,z_{xx})-{\displaystyle \frac{n\,\alpha_{2}}{\sigma_{2}}}\neq0$
due to \eqref{7.5.1}.

Then, by performing a transformation of the form 
\[
x\mapsto\mp\,\eta\,x,\qquad t\mapsto\pm\,n\,t,\qquad z\mapsto\mp{\displaystyle \frac{\sigma_{2}}{\eta}}\,\left(z+{\displaystyle \frac{\alpha_{2}}{\sigma_{2}}}\right),\qquad\psi\mapsto\pm{\displaystyle \frac{\sigma_{2}}{n}}\,\psi,\qquad\lambda\mapsto\eta^{2}\,\lambda
\]
with $\eta,n,\sigma_{2}\neq0$, we get the form given in (I) with
$\eta\neq0$, $a=1$ and $b=1$.

Analogously, by performing a transformation of the form 
\[
x\mapsto\mp\,\eta\,x,\qquad z\mapsto\mp\,{\displaystyle \frac{\sigma_{2}}{\eta}}\,\left(z+{\displaystyle \frac{\alpha_{2}}{\sigma_{2}}}\right),\qquad\psi\mapsto{\displaystyle \sigma_{2}}\,\psi,\qquad\lambda\mapsto\eta^{2}\,\lambda
\]
with $\eta,\sigma_{2}\neq0$ and $n=0$, one gets the differential
equation describing \textbf{pss} given by (I) and associated $1$-forms,
with $\eta\neq0$, $a=1$ and $b=0$.

\textbf{Case (II) $\boxed{\alpha_{3}\mp\alpha_{2}\neq0}$. }In this
case, in view of \eqref{4.26}, \eqref{4.24}, \eqref{4.23}, \eqref{A001}
and \eqref{B001} one gets the differential equation

\begin{eqnarray*}
z_{t}-\lambda\,z_{2t}=\dfrac{\lambda\,h}{\sigma_{2}(\alpha_{3}\mp\alpha_{2})}z_{3}\mp\,D_{x}^{3}\left(\dfrac{h}{\sigma_{2}^{2}(\alpha_{3}\mp\alpha_{2})^{2}}\right)\hspace*{8cm}\\
\\
-2\left((\alpha_{3}\mp\alpha_{2})\sigma_{2}\,z\pm\dfrac{\alpha_{3}^{2}-\alpha_{2}^{2}}{2}\mp\dfrac{\eta^{2}}{2}-\lambda(\alpha_{3}\mp\alpha_{2})\sigma_{2}\,z_{2}\right)D_{x}\left(\dfrac{h}{\sigma_{2}^{2}(\alpha_{3}\mp\alpha_{2})^{2}}\right)-\frac{h}{\sigma_{2}(\alpha_{3}\mp\alpha_{2})}z_{1},
\end{eqnarray*}
together with the corresponding functions 
\[
\begin{array}{ll}
 & f_{11}=\eta,\vspace*{0.3cm}\\
 & f_{21}=\sigma_{2}(z-\lambda z_{2})+\alpha_{2},\vspace*{0.4cm}\\
 & f_{31}=\pm\sigma_{2}(z-\lambda z_{2})+\alpha_{3},\vspace*{0.4cm}\\
 & f_{12}=\mp\dfrac{h'}{\sigma_{2}(\alpha_{3}\mp\alpha_{2})}\,z_{1}-\dfrac{\eta\,h}{\sigma_{2}(\alpha_{3}\mp\alpha_{2})},\vspace*{0.4cm}\\
 & f_{22}=\dfrac{1}{(\alpha_{3}\mp\alpha_{2})}\left(\mp\dfrac{h''}{\sigma_{2}(\alpha_{3}\mp\alpha_{2})}\,z_{1}^{2}\mp\dfrac{h'}{\sigma_{2}(\alpha_{3}\mp\alpha_{2})}\,z_{2}-\dfrac{\eta\,h'}{\sigma_{2}(\alpha_{3}\mp\alpha_{2})}\,z_{1}-(z-\lambda z_{2})\,h-\dfrac{\alpha_{2}}{\sigma_{2}}\,h\right),\vspace*{0.4cm}\\
 & f_{32}=\pm\dfrac{1}{(\alpha_{3}\mp\alpha_{2})}\left(\mp\dfrac{h''}{\sigma_{2}(\alpha_{3}\mp\alpha_{2})}\,z_{1}^{2}\mp\dfrac{h'}{\sigma_{2}(\alpha_{3}\mp\alpha_{2})}\,z_{2}-\dfrac{\eta\,h'}{\sigma_{2}(\alpha_{3}\mp\alpha_{2})}\,z_{1}-(z-\lambda z_{2})\,h\mp\dfrac{\alpha_{3}}{\sigma_{2}}\,h\right),
\end{array}\hspace*{1cm}
\]
where $h'\neq0$ due to \eqref{7.5.1}.

Then, by performing a transformation of the form 
\[
\begin{array}{l}
t\mapsto\left(\alpha_{3}\mp\alpha_{2}\right)\sigma_{2}\,t,\qquad z\mapsto\left(\alpha_{3}\mp\alpha_{2}\right)\sigma_{2}\,z\pm\left({\displaystyle \frac{\alpha_{3}^{2}-\alpha_{2}^{2}}{2}}\right)\mp{\displaystyle \frac{{\displaystyle \eta}^{2}}{2}},\qquad h\mapsto{\displaystyle \frac{h}{\sigma_{2}^{2}\left(\alpha_{3}\mp\alpha_{2}\right)^{2}}}\end{array}
\]
with $\alpha:=\alpha_{3}\mp\alpha_{2}$, one gets the differential
equation describing \textbf{pss} given by (II) and associated $1$-forms.

The converse is a straightforward computation.

\subsection{Proof of Theorem \ref{teo.clas.3.3}}

Following the notations used in Lemma \ref{lema1}, we will consider
here $\left(f_{31,z}\right)^{2}-\delta\left(f_{21,z}\right)^{2}=\sigma_{3}^{2}-\delta\sigma_{2}^{2}\neq0$,
with $\eta=0$. Thus we need to determine the functions $f_{ij}$
that satisfy (\ref{fij})-(\ref{7.5.1}). First, from equation \eqref{last.eq.f22.f32}
one gets the following two equations 
\begin{eqnarray}
f_{12}=-\dfrac{z_{1}}{\gamma\,z+m_{1}}\,\varphi_{,z},\label{4.17}
\end{eqnarray}

\begin{eqnarray}
(\gamma\,z+m_{1})\,\varphi_{,z_{1}}+\lambda\,\gamma\,z_{1}\,\varphi_{,z}=0,\label{4.18}
\end{eqnarray}
where 
\begin{equation}
\gamma:=\sigma_{3}^{2}-\delta\,\sigma_{2}^{2},\qquad m_{1}:=\sigma_{3}\alpha_{3}-\delta\sigma_{2}\alpha_{2}\label{eq:gamma-m1-th3.2}
\end{equation}
and $\gamma\,z+m_{1}\neq0$, in view of $\gamma\neq0$.

One can distinguish two cases: \textbf{(a)} $-\sigma_{3}\alpha_{2}+\sigma_{2}\alpha_{3}=0$;
\textbf{(b)} $-\sigma_{3}\alpha_{2}+\sigma_{2}\alpha_{3}\neq0$.

\textbf{(a)} In this case $\sigma_{2}\neq0$ . Indeed, by $\sigma_{2}^{2}+\sigma_{3}^{2}\neq0$
and $-\sigma_{3}\alpha_{2}+\sigma_{2}\alpha_{3}=0$, if $\sigma_{2}=0$
then $\alpha_{2}=0$, which contradicts non-degeneracy condition \eqref{7.5.1}.
Thus, $\sigma_{2}\neq0$ and $\alpha_{3}=\sigma_{3}\alpha_{2}/\sigma_{2}$.
Therefore by \eqref{first.f22.f32} and \eqref{sec.f22.f32} one gets
\begin{eqnarray}
\dfrac{\alpha_{2}}{\sigma_{2}}\varphi=\dfrac{\alpha_{2}}{\sigma_{2}}\left(\sigma_{3}f_{22}-\sigma_{2}f_{32}\right)=\alpha_{3}\,f_{22}-\alpha_{2}\,f_{32} & = & {\displaystyle \sum_{i=0}^{1}z_{i+1}f_{12,z_{i}}-(z-\lambda z_{2})\,\varphi.}\label{eq:aux1}
\end{eqnarray}
Hence, by differentiating \eqref{eq:aux1} with respect to $z_{2}$,
one has 
\begin{eqnarray}
f_{12,z_{1}}+\lambda\,\varphi=0,\qquad f_{12,z}-\dfrac{1}{z_{1}}\left(z+\dfrac{\alpha_{2}}{\sigma_{2}}\right)\varphi=0\label{4.19}
\end{eqnarray}
and the compatibility condition $f_{12,z\,z_{1}}=f_{12,z_{1}\,z}$
reads 
\begin{eqnarray*}
z_{1}\,\left[\left(z+\frac{\alpha_{2}}{\sigma_{2}}\right)\,\varphi_{,z_{1}}+\lambda\,z_{1}\,\varphi_{,z}\right]-\left(z+\frac{\alpha_{2}}{\sigma_{2}}\right)\varphi=0.
\end{eqnarray*}
On the other hand $\alpha_{3}=\sigma_{3}\alpha_{2}/\sigma_{2}$ entails
that $m_{1}=\alpha_{2}\,\gamma/\sigma_{2}$, and \eqref{4.18} leads
to 
\begin{eqnarray}
\left(z+\frac{\alpha_{2}}{\sigma_{2}}\right)\,\varphi_{,z_{1}}+\lambda\,z_{1}\,\varphi_{,z}=0.\label{4.20}
\end{eqnarray}
Thus $\varphi=0$ and by \eqref{4.17} one gets $f_{12}=0$, which
contradicts \eqref{7.5.1}. Hence the case $-\sigma_{3}\alpha_{2}+\sigma_{2}\alpha_{3}=0$
is not allowed.

\textbf{(b)} In this case \eqref{first.f22.f32} and \eqref{sec.f22.f32}
can be solved with respect to $f_{22}$ and $f_{32}$ as 
\begin{eqnarray}
 &  & f_{22}=\frac{1}{m_{2}}\left[-\alpha_{2}\,\varphi+\sigma_{2}\,{\displaystyle \sum_{i=0}^{1}z_{i+1}f_{12,z_{i}}-\sigma_{2}\,(z-\lambda z_{2})\,\varphi}\right],\label{4.21}\\
 &  & f_{32}=\frac{1}{m_{2}}\left[-\alpha_{3}\,\varphi+\sigma_{3}\,{\displaystyle \sum_{i=0}^{1}z_{i+1}f_{12,z_{i}}-\sigma_{3}\,(z-\lambda z_{2})\,\varphi}\right],\label{4.22}
\end{eqnarray}
where $f_{12}$ is given by \eqref{4.17}, $\varphi$ satisfies \eqref{4.18}
and 
\begin{equation}
m_{2}:=\sigma_{2}\alpha_{3}-\sigma_{3}\alpha_{2}.\label{eq:m2_th32}
\end{equation}

Hence, by substituting \eqref{4.21} and \eqref{4.22} in \eqref{A001}
and \eqref{B001}, and using \eqref{4.17} and \eqref{4.18}, we get
the partial differential equation 
\begin{center}
\begin{eqnarray}
z_{t}-\lambda\,z_{2t}=-D_{x}^{2}\left[z_{1}\left(\dfrac{\varphi_{,z}}{\gamma\,z+m_{1}}\right)\right]-D_{x}\left[(z-\lambda\,z_{2})\,\varphi\right]-\frac{m_{1}}{\gamma}\,\varphi_{z_{1}}\,z_{2}-\frac{1}{\gamma}\left(m_{1}-\frac{\delta m_{2}^{2}}{\gamma z+m_{1}}\right)\,\varphi_{,z}\,z_{1},\label{eq..iiiii3}
\end{eqnarray}
\par\end{center}

\vspace*{0.3cm}
\noindent where $m_{2}\gamma\neq0$ and, in order to simplify the obtained
partial differential equation, $\varphi$ was rearranged as $\varphi\mapsto\varphi/m_{2}$.
Accordingly, in view of \eqref{4.17} and \eqref{4.21}-\eqref{4.22}
one gets the corresponding functions \vspace*{0.3cm}

\noindent 
\[
\begin{array}{ll}
 & f_{11}=0,\vspace*{0.3cm}\\
 & f_{21}=\sigma\,(z-\lambda\,z_{2})+\dfrac{\sigma\,m_{1}\mp\,m_{2}\,\sqrt{\gamma+\delta\sigma^{2}}}{\gamma},\qquad\gamma+\delta\sigma^{2}\geq0,\qquad\sigma\in\mathbb{R},\vspace*{0.3cm}\\
 & f_{31}=\pm\sqrt{\gamma+\delta\sigma^{2}}\,\,(z-\lambda\,z_{2})-\dfrac{\delta\sigma\,m_{2}\mp\,m_{1}\sqrt{\gamma+\delta\sigma^{2}}}{\gamma},\vspace*{0.3cm}\\
 & f_{12}=-\dfrac{m_{2}\,\varphi_{,z}}{\gamma\,z+m_{1}}\,z_{1},\vspace*{0.3cm}\\
 & f_{22}=-\sigma\,D_{x}\left[z_{1}\left(\dfrac{\varphi_{,z}}{\gamma\,z+m_{1}}\right)\right]-\sigma\,(z-\lambda\,z_{2})\,\varphi-\dfrac{\sigma\,m_{1}\mp\,m_{2}\,\sqrt{\gamma+\delta\sigma^{2}}}{\gamma}\,\varphi\,,\vspace*{0.3cm}\\
 & f_{32}=\mp\,\sqrt{\gamma+\delta\sigma^{2}}\,\,D_{x}\left[z_{1}\left(\dfrac{\varphi_{,z}}{\gamma\,z+m_{1}}\right)\right]\mp\,\sqrt{\gamma+\delta\sigma^{2}}\,(z-\lambda\,z_{2})\,\varphi+\dfrac{\delta\sigma\,m_{2}\mp\,m_{1}\sqrt{\gamma+\delta\sigma^{2}}}{\gamma}\,\varphi\,,
\end{array}\hspace*{1cm}
\]

\vspace*{0.3cm}
\noindent where $\sigma:=\sigma_{2}$, $\gamma+\delta\,\sigma^{2}\geq0$, $\varphi=\varphi(z,z_{1})$
is a differentiable function satisfying $\varphi_{,z}\neq0$ and \eqref{4.18};
moreover we took account of \eqref{eq:gamma-m1-th3.2} and \eqref{eq:m2_th32}
by setting $\sigma_{3}=\pm\,\sqrt{\gamma+\delta\,\sigma^{2}}$, $\alpha_{2}=(\sigma\,m_{1}-\sigma_{3}\,m_{2})/\gamma$
and $\alpha_{3}=(\sigma_{3}\,m_{1}-\delta\,\sigma\,m_{2})/\gamma$.

\noindent Now, by performing the transformation $\{z\mapsto\gamma z+m_{1}\}$
one can see that \eqref{4.18} reduces to $z\varphi_{,z_{1}}+\lambda z_{1}\varphi_{,z}=0$.
Hence without loss of generality one can always put $m_{1}=0$ and
take $\varphi=\varphi(\lambda z_{1}^{2}-z^{2})$. Further simplifications
follow by observing that $\varphi_{z}/z=-2\varphi'$ and setting $r:=-m_{2}$
equation \eqref{eq..iiiii3} reduces to the desired differential equation
describing \textbf{pss} and \textbf{ss}, together with the associated
$1$-forms.

The converse is a straightforward computation.


\subsection{Proof of Theorem \ref{teo.clas.3.5}}

Following the notations used in Lemma \ref{lema1}, we will consider
here $\left(f_{31,z}\right)^{2}-\delta\left(f_{21,z}\right)^{2}=\sigma_{3}^{2}-\delta\sigma_{2}^{2}\neq0$,
with $\eta\neq0$. Thus we need to determine the functions $f_{ij}$
that satisfy (\ref{fij})-(\ref{7.5.1}). First, from \eqref{first.f22.f32}
and \eqref{last.eq.f22.f32} one gets 
\begin{eqnarray}
 &  & f_{22}=\dfrac{\sigma_{3}}{\gamma}\,\varphi+\dfrac{\sigma_{2}}{\eta\gamma}{\displaystyle \sum_{i=0}^{1}z_{i+1}\,\varphi_{,z_{i}}+\dfrac{\sigma_{2}}{\eta\gamma}\left[\gamma\,(z-\lambda z_{2})+m_{1}\right]\,f_{12}\,,\label{4.27}}\\
 &  & f_{32}=\dfrac{\delta\sigma_{2}}{\gamma}\,\varphi+\dfrac{\sigma_{3}}{\eta\gamma}{\displaystyle \sum_{i=0}^{1}z_{i+1}\,\varphi_{,z_{i}}+\dfrac{\sigma_{3}}{\eta\gamma}\left[\gamma\,(z-\lambda z_{2})+m_{1}\right]\,f_{12}\,,\label{4.28}}
\end{eqnarray}
where $\varphi^{2}+f_{12}^{2}\neq0$, in view of (\ref{7.5.1}), and
$\gamma,m_{1}$ are the constants defined in \eqref{eq:gamma-m1-th3.2}.

Then, by substituting \eqref{4.27} and \eqref{4.28} in \eqref{sec.f22.f32},
and differentiating with respect to $z_{2}$ one gets the following
two equations 
\begin{eqnarray}
\ell_{1,z_{1}}+\lambda\,\gamma\,\ell_{2}=0,\label{4.29}\\
\nonumber \\
z_{1}\,\ell_{1,z}-(\gamma\,z+m_{1})\,\ell_{2}=0,\label{4.30}
\end{eqnarray}
where 
\begin{equation}
\ell_{1}:=\eta\,\gamma\,f_{12}-m_{2}\varphi,\qquad\ell_{2}:=m_{2}\,f_{12}+\eta\,\varphi\label{eq:ell_th3.3}
\end{equation}
and $m_{2}$ is the constant defined in \eqref{eq:m2_th32}.

Now, since $\gamma\neq0$, one has $\gamma\,z+m_{1}\neq0$ and in
view of \eqref{4.30} equation \eqref{4.29} leads to the following
partial differential equation 
\begin{eqnarray}
(\gamma\,z+m_{1})\,\ell_{1,z_{1}}+\lambda\,\gamma\,z_{1}\,\ell_{1,z}=0.\label{4.31}
\end{eqnarray}

On the other hand, by using above expression of $\ell_{2}$ together
with \eqref{4.30}, one obtains 
\begin{eqnarray}
\varphi=-\dfrac{m_{2}}{\eta}\,f_{12}+\dfrac{z_{1}}{\eta\,(\gamma\,z+m_{1})}\,\ell_{1,z},\label{4.32}
\end{eqnarray}
which replaced in the expression of $\ell_{1}$ leads to 
\begin{eqnarray}
(\gamma\,\eta^{2}+m_{2}^{2})\,f_{12}=\eta\,\ell_{1}+\frac{m_{2}\,z_{1}}{\gamma\,z+m_{1}}\,\ell_{1,z}.\label{4.33}
\end{eqnarray}
From now on we will distinguish the cases:\textbf{ (I)} $\gamma\,\eta^{2}+m_{2}^{2}=0$;
\textbf{(II) }$\gamma\,\eta^{2}+m_{2}^{2}\neq0$.

\textbf{Case (I)} $\boxed{\gamma\,\eta^{2}+m_{2}^{2}=0}$. First we
notice that in this case $\gamma<0$ and hence by \eqref{eq:gamma-m1-th3.2}
it follows that $\delta=1$. Moreover, by $\gamma\eta\neq0$ one has
$m_{2}\neq0$, and we state that $\ell_{1}=0$. Indeed, if $\ell_{1}\neq0$
one could rewrite \eqref{4.33} in the form 
\[
\dfrac{\ell_{1,z}}{\ell_{1}}=-\dfrac{\eta\,(\gamma\,z+m_{1})}{m_{2}\,z_{1}}.
\]
Then, by substituting the solution of the last equation in \eqref{4.31}
one would get $\gamma=0$, in contradiction with the assumption $\gamma=\sigma_{3}^{2}-\delta\sigma_{2}^{2}\neq0$.
Thus, $\ell_{1}=0$ and \eqref{4.31} is satisfied. On the other hand,
by \eqref{4.32} one has $f_{12}=-\eta\,\varphi/m_{2}$ and, by \eqref{4.27}
and \eqref{4.28}, one gets $f_{22}$ and $f_{32}$ as functions of
$\varphi$. Therefore, by replacing such functions in \eqref{A001}
and \eqref{B001}, one obtains the partial differential equation

\begin{eqnarray}
z_{t}-\lambda\,z_{2t}=\dfrac{1}{\eta\,\gamma}\,D_{x}^{2}\left(\varphi\right)-D_{x}\left[\dfrac{\varphi}{m_{2}\,\gamma}(\gamma\,z+m_{1}-\lambda\,\gamma\,z_{2})\right]\,,\label{eq55.t3}
\end{eqnarray}
together with the corresponding functions 
\[
\begin{array}{ll}
 & f_{11}=\eta,\vspace*{0.4cm}\\
 & f_{21}=\sigma\,(z-\lambda\,z_{2})+\dfrac{\sigma\,m_{1}\mp\,m_{2}\,\sqrt{\gamma+\sigma^{2}}}{\gamma},\qquad\gamma+\sigma^{2}\geq0,\qquad\sigma\in\mathbb{R}\setminus\{0\},\vspace*{0.4cm}\\
 & f_{31}=\pm\sqrt{\gamma+\sigma^{2}}\,\,(z-\lambda\,z_{2})-\dfrac{\sigma\,m_{2}\mp\,m_{1}\sqrt{\gamma+\sigma^{2}}}{\gamma},\vspace*{0.4cm}\\
 & f_{12}=-\dfrac{\eta}{m_{2}}\,\varphi,\vspace*{0.4cm}\\
 & f_{22}=\pm\dfrac{\sqrt{\gamma+\sigma^{2}}}{\gamma}\,\varphi+\dfrac{\sigma}{\eta\gamma}\,D_{x}(\varphi)-\dfrac{\sigma}{m_{2}\,\gamma}\left[\gamma\,(z-\lambda z_{2})+m_{1}\right]\varphi\,,\vspace*{0.4cm}\\
 & f_{32}=\dfrac{\sigma}{\gamma}\,\varphi\pm\dfrac{\sqrt{\gamma+\sigma^{2}}}{\eta\gamma}\,D_{x}(\varphi)\mp\dfrac{\sqrt{\gamma+\sigma^{2}}}{m_{2}\,\gamma}\left[\gamma\,(z-\lambda z_{2})+m_{1}\right]\varphi\,,
\end{array}\hspace*{1cm}
\]

\noindent where $\sigma:=\sigma_{2}$, $m_{2}\neq0$, $\gamma<0$, $\gamma+\sigma^{2}\geq0$,
$\varphi=\varphi(z,z_{1})$ is a differentiable function, that in
view of \eqref{7.5.1} satisfy $D_{x}\varphi\neq0$, and we took account
of \eqref{eq:gamma-m1-th3.2} and \eqref{eq:m2_th32} by setting $\sigma_{3}=\pm\sqrt{\gamma+\sigma^{2}}$,
$\alpha_{2}=(\sigma_{2}m_{1}-\sigma_{3}m_{2})/\gamma$, $\alpha_{3}=(\sigma_{3}m_{1}-\delta\sigma_{2}m_{2})/\gamma$.

Then, by suitably adapting $z$, $\varphi$, $m_{2}$ according to
\[
z\mapsto\eta\left(\gamma z+m_{1}\right)/m_{2},\qquad\varphi\mapsto\varphi/m_{2},
\]
and introducing $\rho:=\sigma\,\eta/m_{2}$, one gets the differential
equation describing \textbf{pss} given by (I), together with the associated
$1$-forms.

\noindent \textbf{Case (II)} $\boxed{\gamma\,\eta^{2}+m_{2}^{2}\neq0}$.
From \eqref{4.33} and \eqref{4.32} one obtains $f_{12}$ and $\varphi$
in tems of $\ell_{1}$ and $\ell_{1,z}$, where $\ell_{1}$ is solution
of \eqref{4.31}. Then, by substituting \eqref{4.27} and \eqref{4.28}
in \eqref{A001} and \eqref{B001}, one has the following partial
differential equation 
\begin{eqnarray}
z_{t}-\lambda\,z_{2t}=\dfrac{1}{\eta\,\gamma}D_{x}^{2}(\varphi)+\frac{1}{\eta\,\gamma}D_{x}\left[\phi\,(\gamma\,z+m_{1}-\lambda\,\gamma\,z_{2})\right]-\dfrac{\delta\,m_{2}}{\gamma}\,\phi-\dfrac{\delta\,\eta\,\varphi}{\gamma},\label{eq56_prof}
\end{eqnarray}
where $\varphi=\varphi(z,z_{1})$ and $\phi=\phi(z,z_{1})$ are 
\begin{eqnarray*}
 &  & \varphi=\dfrac{\gamma\,\eta\,\ell_{,z}\,z_{1}}{(\gamma\,\eta^{2}+m_{2}^{2})(\gamma\,z+m_{1})}-\dfrac{m_{2}\,\ell}{\gamma\,\eta^{2}+m_{2}^{2}}\,,\\
\\
 &  & \phi=\dfrac{m_{2}\,\ell_{,z}\,z_{1}}{(\gamma\,\eta^{2}+m_{2}^{2})(\gamma\,z+m_{1})}+\dfrac{\eta\,\ell}{\gamma\,\eta^{2}+m_{2}^{2}}\,,\\
\end{eqnarray*}
and $\gamma(\gamma\,\eta^{2}+m_{2}^{2})\neq0$. Also, from \eqref{fij},
\eqref{4.27}, \eqref{4.28}, \eqref{4.31}, \eqref{4.32} and \eqref{4.33},
one gets the corresponding functions

\[
\begin{array}{ll}
 & f_{11}=\eta,\vspace*{0.3cm}\\
 & f_{21}=\sigma\,(z-\lambda\,z_{2})+\dfrac{\sigma\,m_{1}\mp\,m_{2}\,\sqrt{\gamma+\delta\sigma^{2}}}{\gamma},\qquad\gamma+\delta\sigma^{2}\geq0,\qquad\sigma\in\mathbb{R},\vspace*{0.4cm}\\
 & f_{31}=\pm\sqrt{\gamma+\delta\sigma^{2}}\,\,(z-\lambda\,z_{2})-\dfrac{\delta\sigma\,m_{2}\mp\,m_{1}\sqrt{\gamma+\delta\sigma^{2}}}{\gamma},\vspace*{0.4cm}\\
 & f_{12}=\dfrac{m_{2}\,\ell_{,z}\,z_{1}}{(\gamma\,\eta^{2}+m_{2}^{2})(\gamma\,z+m_{1})}+\dfrac{\eta\,\ell}{\gamma\,\eta^{2}+m_{2}^{2}}\,,\vspace*{0.4cm}\\
 & f_{22}=\pm\dfrac{\sqrt{\gamma+\delta\sigma^{2}}}{\gamma}\,\varphi+\dfrac{\sigma}{\eta\gamma}D_{x}(\varphi)+\dfrac{\sigma}{\eta\gamma}\left[\gamma\,(z-\lambda z_{2})+m_{1}\right]\,\phi\,,\vspace*{0.4cm}\\
 & f_{32}=\dfrac{\delta\sigma}{\gamma}\,\varphi\pm\dfrac{\sqrt{\gamma+\delta\sigma^{2}}}{\eta\gamma}D_{x}(\varphi)\pm\dfrac{\sqrt{\gamma+\delta\sigma^{2}}}{\eta\gamma}\left[\gamma\,(z-\lambda z_{2})+m_{1}\right]\,\phi\,,
\end{array}\hspace*{1cm}
\]

\vspace*{0.3cm}
\noindent where $\sigma:=\sigma_{2}$, $r:=m_{2}$ $\gamma+\delta\,\sigma^{2}\geq0$,
$\ell:=\ell_{1}$ is a nonzero differentiable function of $(z,z_{1})$
satisfying the differential equation \eqref{4.31}, and we took account
of \eqref{eq:gamma-m1-th3.2} and \eqref{eq:m2_th32} by setting $\sigma_{3}=\pm\,\sqrt{\gamma+\delta\,\sigma^{2}}$,
$\alpha_{2}=(\sigma\,m_{1}-\sigma_{3}\,m_{2})/\gamma$ and $\alpha_{3}=(\sigma_{3}\,m_{1}-\delta\,\sigma\,m_{2})/\gamma$.
In addition, $\eta\,f_{22}-\phi\,f_{21}\neq0$ due to nondegeneracy
condition \eqref{7.5.1}.

\noindent Now, by performing a transformation $z\mapsto\gamma z+m_{1}$,
equation \eqref{4.31} reduces to $z\ell_{,z_{1}}+\lambda z_{1}\ell_{,z}=0$.
Hence without loss of generality one can always put $m_{1}=0$ and
take $\ell=\ell(\lambda z_{1}^{2}-z^{2})$. Further simplifications
follow by observing that $\ell_{z}/z=-2\ell'$ and \eqref{eq56_prof}
reduces to the desired differential equation describing \textbf{pss}
or \textbf{ss} given by (II), together with the associated $1$-forms.
In particular, $\ell'\neq0$ due to $\omega_{1}\wedge\omega_{2}\neq0$.

\noindent The converse is a straightforward computation.

\section{Proof of the main results for equations of the form $z_{2t}=A\,z_{3}+B$
\label{sec4*}}

This section is devoted to the proofs of Theorems \ref{teo.clas.3.2*},
\ref{teo.clas.3.3*} and \ref{teo.clas.3.5*}.

\subsection{Preliminary lemma}

We start with a lemma that gives a convenient characterization of
equations \eqref{eq:C-2} describing \textbf{pss} or \textbf{ss} ($\delta=1$
or $\delta=-1,$ respectively) under assumptions \textbf{(i-ii)}.

\medskip{}

\begin{lem}
\label{lema1*} A partial differential equation of the form \eqref{eq:C-2}
describes \textbf{pss} or \textbf{ss} ($\delta=1$ or $\delta=-1,$
respectively), with corresponding functions $f_{ij}=f_{ij}(z,z_{1},...,z_{k})$
satisfying \textbf{(i-ii)}, if and only if 
\begin{equation}
f_{11}=\eta,\quad\quad f_{p1}=\mu_{p}\,z_{2}+\alpha_{p},\quad2\leq p\leq3,\label{fij*}
\end{equation}
\begin{equation}
\begin{array}{lcccc}
f_{12}=f_{12}(z,z_{1}),\qquad & f_{p2}=f_{p2}(z,z_{1},z_{2}),\quad & p=2,3,\end{array}\label{7.2-2.3.1-cond_A*}
\end{equation}
\begin{eqnarray}
(\mu_{2}^{2}+\mu_{3}^{2})\,A=\mu_{2}\,f_{22,z_{2}}+\mu_{3}\,f_{32,z_{2}}\,,\label{A3D*}
\end{eqnarray}
\begin{eqnarray}
\mu_{3}f_{22}-\mu_{2}f_{32} & = & \varphi,\label{first.f22.f32*}\\
\alpha_{3}\,f_{22}-\alpha_{2}\,f_{32} & = & {\displaystyle \sum_{i=0}^{1}z_{i+1}f_{12,z_{i}}-z_{2}\,\varphi,\label{sec.f22.f32*}}
\end{eqnarray}

\begin{eqnarray}
(\mu_{2}^{2}+\mu_{3}^{2})\,B={\displaystyle \sum_{i=0}^{1}z_{i+1}(\mu_{2}\,f_{22}+\mu_{3}\,f_{32})_{,z_{i}}-\eta(\mu_{2}\,f_{32}+\delta\mu_{3}\,f_{22})}{\displaystyle \qquad\qquad\qquad\nonumber}\\
+[\mu_{2}\mu_{3}(1+\delta)z_{2}+\mu_{2}\alpha_{3}+\delta\mu_{3}\alpha_{2}]\,f_{12},\label{B3D*}
\end{eqnarray}

\begin{equation}
\eta(\mu_{3}\,f_{32}-\delta\mu_{2}\,f_{22})={\displaystyle \sum_{i=0}^{1}z_{i+1}\,\varphi_{,z_{i}}+[(\mu_{3}^{2}-\delta\mu_{2}^{2})z_{2}+\mu_{3}\alpha_{3}-\delta\mu_{2}\alpha_{2}]\,f_{12},}\label{last.eq.f22.f32*}
\end{equation}
where $\eta,\mu_{p},\alpha_{p}\in\mathbb{R}$, $\mu_{2}^{2}+\mu_{3}^{2}\neq0$,
and $\varphi=\varphi(z,z_{1})$ is a real and differentiable function
of $z$ and $z_{1}$. Furthermore, 
\begin{eqnarray}
\eta f_{22}-(\mu_{2}z_{2}+\alpha_{2})\,f_{12}\neq0.\label{7.5.1*}
\end{eqnarray}
\end{lem}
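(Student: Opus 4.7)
The plan is to mirror the proof of Lemma~\ref{lema1} step by step, adapting the computations to the equation $z_{2t} = A\,z_3 + B$. Assuming $z(x,t)$ is a solution, on solutions one has $dz_2 \wedge dx = -(A\,z_3 + B)\,dx \wedge dt$, while $dz \wedge dx = -z_t\,dx \wedge dt$ and $dz_1 \wedge dx = -z_{1t}\,dx \wedge dt$ involve the a priori free derivatives $z_t,\, z_{1t}$; one also keeps $dz_i \wedge dt = z_{i+1}\,dx \wedge dt$ for $i \geq 0$. These substitutions prepare the structure equations \eqref{struttura}, rewritten in the form \eqref{(7.7-a)}, to be read off on solutions.

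First, I would expand each $df_{ij} = \sum_k f_{ij,z_k}\,dz_k$, plug the substitutions into the three structure equations, and equate to zero the coefficients of the linearly independent 2-forms $dz \wedge dx$, $dz_1 \wedge dx$, $dz_k \wedge dx$ and $dz_k \wedge dt$ for $k \geq 3$, together with $dx \wedge dt$. The crucial novelty compared with Lemma~\ref{lema1} is that the present equation provides no relation between $z_t$ and $z_{2t}$, so the coefficient of $dz \wedge dx$ alone must vanish, forcing $f_{i1,z} = 0$ for $i = 1,2,3$. Combined with $f_{i1,z_1} = 0$, $f_{i1,z_k} = 0$ for $k \geq 3$, and $f_{i2,z_k} = 0$ for $k \geq 3$, the hypotheses \textbf{(i-ii)} then yield \eqref{fij*}, in which (in contrast with \eqref{fij}) no $z$-term survives in $f_{p1}$.

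Three identities in $dx \wedge dt$ (the analogs of \eqref{(7.8)}) remain. Differentiating them with respect to $z_3$ eliminates the $A\,z_3$-term and gives $f_{12,z_2} = 0$ (hence $f_{12} = f_{12}(z,z_1)$ and \eqref{7.2-2.3.1-cond_A*}) and $f_{p2,z_2} = \mu_p\,A$ for $p = 2,3$. The linear combination of the latter with coefficients $(\mu_2,\mu_3)$ produces \eqref{A3D*}, while its combination with $(\mu_3,-\mu_2)$ vanishes identically in $A$, so $z_2$-integration yields $\mu_3 f_{22} - \mu_2 f_{32} = \varphi(z,z_1)$, namely \eqref{first.f22.f32*}. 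Substituting this back into the residual identity coming from the first structure equation, and rewriting the $z_2$-dependent piece as $\sum z_{i+1} f_{12,z_i} - z_2\,\varphi$, produces \eqref{sec.f22.f32*}. Finally, the residual identities from the second and third structure equations, combined with coefficients $(\mu_2,\mu_3)$ and $(\mu_3,-\mu_2)$ and using \eqref{first.f22.f32*} to replace the $\mu_p$-weighted sums, produce \eqref{B3D*} and \eqref{last.eq.f22.f32*} respectively. The nondegeneracy \eqref{7.5.1*} is just $\omega_1 \wedge \omega_2 \neq 0$ under \eqref{fij*}-\eqref{7.2-2.3.1-cond_A*}.

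The main technical obstacle is the careful bookkeeping of which derivatives of $f_{ij}$ appear in each coefficient, and the consistent use of $f_{p2,z_2} = \mu_p\,A$ to eliminate $A$ from the three residual identities and to verify that $\varphi$ depends only on $(z,z_1)$ (which follows from integrating a combination that is identically zero in $A$, together with $f_{12,z_2}=0$). The converse direction is a direct verification that \eqref{fij*}-\eqref{7.5.1*} imply \eqref{struttura} whenever $z$ satisfies $z_{2t} = A\,z_3 + B$.
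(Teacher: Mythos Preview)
Your proposal is correct and follows essentially the same approach as the paper's own proof: expanding the structure equations, substituting the on-shell relations for $dz_k\wedge dx$ and $dz_k\wedge dt$, reading off the vanishing conditions \eqref{3.5.1*} and the system \eqref{(7.8)*}, then differentiating in $z_3$ and taking $(\mu_2,\mu_3)$- and $(\mu_3,-\mu_2)$-combinations to extract \eqref{A3D*}--\eqref{last.eq.f22.f32*}. Your explicit remark that $z_t$ and $z_{1t}$ are free on solutions of $z_{2t}=A z_3+B$, which is what forces $f_{i1,z}=0$ (and hence kills the $\sigma_p z$ term allowed by \textbf{(ii)}), is exactly the mechanism behind the paper's condition $f_{i1,z_r}=0$ for $r\neq 2$; the only point you leave implicit is the argument that $\mu_2^2+\mu_3^2\neq 0$, which the paper deduces from $A^2+B^2\neq 0$ together with the fact that otherwise the reduced structure equations impose no constraint on $A,B$.
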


\begin{proof}
Let $z=z(x,t)$ be a function satisfying ($\ref{eq:C-2}$). Then
\begin{eqnarray}
 &  & dz_{2}\wedge dx=-(A\,z_{3}+B)dx\wedge dt,\label{(7.6)*}\\
 &  & dz_{i}\wedge dt=z_{i+1}dx\wedge dt,\quad0\leq i\leq m.\nonumber 
\end{eqnarray}
Since the 1-forms $\omega_{i}$ satisfy the structure equations ($\ref{struttura}$),
one gets 
\begin{eqnarray}
df_{11}\wedge dx+df_{12}\wedge dt+(f_{32}f_{21}-f_{31}f_{22})dx\wedge dt=0,\nonumber \\
df_{21}\wedge dx+df_{22}\wedge dt+(-f_{11}f_{32}+f_{31}f_{12})dx\wedge dt=0,\label{(7.7-a)*}\\
df_{31}\wedge dx+df_{32}\wedge dt+\delta(f_{21}f_{12}-f_{11}f_{22})dx\wedge dt=0.\nonumber 
\end{eqnarray}
Then, by substituting 
\[
df_{ij}={\displaystyle \sum_{k=0}^{m}f_{ij,z_{k}}dz_{k},}
\]
in ($\ref{(7.7-a)*}$), using ($\ref{(7.6)*}$) and equating to zero
the coefficients of the linearly independent coordinate 2-forms, one
obtains the conditions 
\begin{eqnarray}
f_{i1,z_{r}}=0,\quad r\geq0,r\neq2,\quad\quad\quad f_{i2,z_{s}}=0,\quad1\leq i\leq3,\quad s\geq3,\label{3.5.1*}
\end{eqnarray}
and also the system 
\begin{eqnarray}
-f_{11,z_{2}}(A\,z_{3}+B)+{\displaystyle \sum_{i=0}^{2}z_{i+1}f_{12,z_{i}}+f_{32}f_{21}-f_{31}f_{22}=0,}\nonumber \\
-f_{21,z_{2}}(A\,z_{3}+B)+{\displaystyle \sum_{i=0}^{2}z_{i+1}f_{22,z_{i}}-f_{11}f_{32}+f_{31}f_{12}=0,}\label{(7.8)*}\\
-f_{31,z_{2}}(A\,z_{3}+B)+{\displaystyle \sum_{i=0}^{2}z_{i+1}f_{32,z_{i}}+\delta(f_{21}f_{12}-f_{11}f_{22})=0.}\nonumber 
\end{eqnarray}
Then, in view of \textbf{(i)-(ii),} the first equation of \eqref{3.5.1*}
leads to \eqref{fij*}. Moreover, \eqref{(7.8)*} reduces to 
\begin{eqnarray}
f_{12,z_{2}}=0,\quad-\mu_{p}\,A+f_{p2,z_{2}}=0,\quad p=2,3,\label{(7.9)*}
\end{eqnarray}
and the further equations 
\begin{eqnarray}
{\displaystyle \sum_{i=0}^{1}z_{i+1}f_{12,z_{i}}+(\mu_{2}z_{2}+\alpha_{2})\,f_{32}-(\mu_{3}z_{2}+\alpha_{3})\,f_{22}=0,\label{(7.3)*}}\\
-\mu_{2}\,B+{\displaystyle \sum_{i=0}^{1}z_{i+1}f_{22,z_{i}}-\eta f_{32}+(\mu_{3}z_{2}+\alpha_{3})\,f_{12}=0,}\label{(7.4)*}\\
-\mu_{3}\,B+{\displaystyle \sum_{i=0}^{1}z_{i+1}f_{32,z_{i}}+\delta[(\mu_{2}z_{2}+\alpha_{2})f_{12}-\eta f_{22}]=0.}\label{(7.5)*}
\end{eqnarray}

Now, from the first equation of ($\ref{(7.9)*}$) one has $f_{12}=f_{12}(z,z_{1})$,
which is the first equation of \eqref{7.2-2.3.1-cond_A*}. Furthermore,
the second equation of \eqref{7.2-2.3.1-cond_A*} follows by second
equation of \eqref{3.5.1*}. On the other hand, since $A^{2}+B^{2}\neq0$
on a nonempty open subset and the structure equations \eqref{struttura}
for equation \eqref{eq:C-2} have been reduced to the second identity
of \eqref{(7.9)*} together with \eqref{(7.3)*}-\eqref{(7.5)*},
one necessarily has $\mu_{2}^{2}+\mu_{3}^{2}\neq0$.

Thus, the second identity of \eqref{(7.9)*} now reads 
\begin{eqnarray*}
-\mu_{2}\,A+f_{22,z_{2}}=0,\\
-\mu_{3}\,A+f_{32,z_{2}}=0.
\end{eqnarray*}
and is equivalent to the system \eqref{A3D*}-\eqref{first.f22.f32*}
where $\varphi=\varphi(z,z_{1})$ is a differentiable function. Indeed,
\eqref{A3D*} is obtained by adding the first equation multiplied
by $\mu_{2}$ with the second one multiplied by $\mu_{3}$, whereas
\eqref{first.f22.f32*} readily follows by adding the first equation
multiplied by $\mu_{3}$ with the second equation multiplied by $-\mu_{2}$
and further integrating the resulting equation with respect to $z_{2}$.
Then, by substituting \eqref{first.f22.f32*} into \eqref{(7.3)*}
one also gets \eqref{sec.f22.f32*}.

Moreover, by multiplying \eqref{(7.4)*} by $\mu_{2}$ and adding
with \eqref{(7.5)*} multiplied by $\mu_{3}$ we obtain \eqref{B3D*}.
Analogously, by multiplying \eqref{(7.4)*} by $\mu_{3}$ and adding
with \eqref{(7.5)*} multiplied by $-\mu_{2}$, equation \eqref{first.f22.f32*}
leads to \eqref{last.eq.f22.f32*}. Finally, \eqref{7.5.1*} follows
by $\omega_{1}\wedge\omega_{2}\neq0$.

The converse is a straightforward computation. This concludes the
proof of Lemma \ref{lema1*}. 
\end{proof}
\begin{rem}
Although in the proof of Lemma \ref{lema1*} we assumed condition
\textbf{(ii)}, according to \eqref{fij*} one has that $f_{21}$ and
$f_{31}$ only depend on $z_{2}$. 
\end{rem}

\medskip{}

\subsection{Proof of Theorem \ref{teo.clas.3.2*}}

Following the notations used in Lemma \ref{lema1*}, we will consider
here $\left(f_{31,z_{2}}\right)^{2}-\delta\left(f_{21,z_{2}}\right)^{2}=\mu_{3}^{2}-\delta\mu_{2}^{2}=0$,
with either $\eta=0$ or $\eta\neq0$.

\subsubsection{Case $\eta=0$}

By Lemma \ref{lema1*} we need to determine the functions $f_{ij}$
that satisfy equations \eqref{fij*}-\eqref{7.5.1*}. First, in view
of $\mu_{3}^{2}-\delta\mu_{2}^{2}=0$, we immediately have $\delta=1$
and $\mu_{3}=\pm\mu_{2}$. Then condition $\mu_{3}^{2}+\mu_{2}^{2}\neq0$
(see Lemma \ref{lema1*}) leads to $\mu_{2}\neq0$ and, in view of
\eqref{first.f22.f32*}, one gets 
\begin{eqnarray}
f_{32}=\pm f_{22}-\dfrac{\varphi}{\mu_{2}},\label{g32.teo3.2*}
\end{eqnarray}
which replaced in \eqref{sec.f22.f32*} gives 
\begin{eqnarray}
(\alpha_{3}\mp\alpha_{2})f_{22}={\displaystyle \sum_{i=0}^{1}z_{i+1}f_{12,z_{i}}-z_{2}\,\varphi-\frac{\alpha_{2}}{\mu_{2}}\varphi.}\label{4.14*}
\end{eqnarray}
Furthermore, \eqref{last.eq.f22.f32*} is equivalent to the system
of the following two equations 
\begin{eqnarray}
\varphi_{,z_{1}}=0,\quad z_{1}\,\varphi_{,z}\pm\mu_{2}(\alpha_{3}\mp\alpha_{2})\,f_{12}=0.\label{4.15*}
\end{eqnarray}

From now on our analysis will be divided into the cases:\textbf{ (I)}
$\alpha_{3}\mp\alpha_{2}=0$;\textbf{ (II)} $\alpha_{3}\mp\alpha_{2}\neq0$.

\textbf{Case (I) $\boxed{\alpha_{3}\mp\alpha_{2}=0}$.} In this case
\eqref{4.15*} leads to $\varphi=m\,\mu_{2}$, $m\in\mathbb{R}$ ,
and \eqref{4.14*} is equivalent to $f_{12}=m\,\mu_{2}\,z_{1}+n$
and $m\,\alpha_{2}=0$, with $n\in\mathbb{R}$. In particular, by
\eqref{7.5.1*} one has $m^{2}+n^{2}\neq0$.

Thus, by replacing \eqref{g32.teo3.2*}, $\varphi=m\,\mu_{2}$ and
$f_{22}:=\mu_{2}\psi$, where $\psi=\psi(z,z_{1},z_{2})$ is a differentiable
function, in the equations \eqref{A3D*} and \eqref{B3D*} one gets
the partial differential equation 
\[
z_{2t}=D_{x}\psi\pm m\,\mu_{2}\,z_{1}z_{2}\pm\,n\left(z_{2}+\frac{\alpha_{2}}{\mu_{2}}\right)\,,\qquad m,n\in\mathbb{R},
\]
together with the corresponding functions 
\[
\begin{array}{ll}
 & f_{11}=0,\vspace*{0.3cm}\\
 & f_{21}=\mu_{2}\,z_{2}+\alpha_{2},\vspace*{0.3cm}\\
 & f_{31}=\pm\,\mu_{2}\,z_{2}\pm\alpha_{2},
\end{array}\hspace*{1cm}\begin{array}{ll}
f_{12}=m\,\mu_{2}\,z_{1}+n,\quad m^{2}+n^{2}\neq0,\vspace*{0.3cm}\\
f_{22}=\mu_{2}\,\psi,\vspace*{0.3cm}\\
f_{32}=\pm\,\mu_{2}\,\psi-m.
\end{array}
\]
Then, by performing a transformation of the form 
\[
z\mapsto\mu_{2}\,z,\qquad\psi\mapsto{\displaystyle \mu_{2}\,\psi\pm\frac{m\mu_{2}^{2}}{2}z_{1}^{2}\pm n\mu_{2}z_{1}}
\]
and further rearranging arbritrary functions, one gets the differential
equation given by (I) and associated $1$-forms, with $\eta=0$, $a=0$,
$b=0$ and $\alpha=\alpha_{2}$.

\medskip{}
 \textbf{Case (II) $\boxed{\alpha_{3}\mp\alpha_{2}\neq0}$.} In this
case, \eqref{4.15*} leads to $\varphi(z,z_{1})=h$, with $h=h(z)$
a differentiable function, and 
\begin{eqnarray}
f_{12}=\mp\dfrac{h'}{\mu_{2}(\alpha_{3}\mp\alpha_{2})}\,z_{1}.\label{4.16*}
\end{eqnarray}
In particular, from \eqref{7.5.1*} one gets that $h'\neq0$. Thus,
in view of \eqref{4.16*}, \eqref{4.14*},\eqref{g32.teo3.2*}, \eqref{A3D*}
and \eqref{B3D*} one gets the partial differential equation 
\begin{eqnarray*}
z_{2t}=-\dfrac{h}{\mu_{2}(\alpha_{3}\mp\alpha_{2})}z_{3}\mp\,D_{x}^{3}\left(\dfrac{h}{\mu_{2}^{2}(\alpha_{3}\mp\alpha_{2})^{2}}\right)-2\left((\alpha_{3}\mp\alpha_{2})\mu_{2}\,z_{2}\pm\dfrac{\alpha_{3}^{2}-\alpha_{2}^{2}}{2}\right)D_{x}\left(\dfrac{h}{\mu_{2}^{2}(\alpha_{3}\mp\alpha_{2})^{2}}\right).
\end{eqnarray*}
Furthermore, in view of \eqref{fij*}, \eqref{g32.teo3.2*}, \eqref{4.14*}
and \eqref{4.16*} one also gets the corresponding functions \vspace*{0.3cm}

\[
\begin{array}{ll}
 & f_{11}=0,\vspace*{0.3cm}\\
 & f_{21}=\mu_{2}z_{2}+\alpha_{2},\vspace*{0.3cm}\\
 & f_{31}=\pm\mu_{2}z_{2}+\alpha_{3},\vspace*{0.3cm}\\
 & f_{12}=\mp\dfrac{h'}{\mu_{2}(\alpha_{3}\mp\alpha_{2})}\,z_{1},\vspace*{0.3cm}\\
 & f_{22}=\dfrac{1}{\alpha_{3}\mp\alpha_{2}}\left(\mp\dfrac{h''}{\mu_{2}(\alpha_{3}\mp\alpha_{2})}\,z_{1}^{2}\mp\dfrac{h'}{\mu_{2}(\alpha_{3}\mp\alpha_{2})}\,z_{2}-z_{2}\,h-\dfrac{\alpha_{2}}{\mu_{2}}\,h\right),\vspace*{0.3cm}\\
 & f_{32}=\pm\dfrac{1}{\alpha_{3}\mp\alpha_{2}}\left(\mp\dfrac{h''}{\mu_{2}(\alpha_{3}\mp\alpha_{2})}\,z_{1}^{2}\mp\dfrac{h'}{\mu_{2}(\alpha_{3}\mp\alpha_{2})}\,z_{2}-z_{2}\,h\mp\dfrac{\alpha_{3}}{\mu_{2}}\,h\right).
\end{array}\hspace*{1cm}
\]

\vspace*{0.3cm}

\noindent Then, by performing a transformation of the form 
\[
\begin{array}{l}
t\mapsto\left(\alpha_{3}\mp\alpha_{2}\right)\mu_{2}\,t,\qquad z\mapsto\left(\alpha_{3}\mp\alpha_{2}\right)\mu_{2}\,z,\qquad h\mapsto{\displaystyle \frac{h}{\mu_{2}^{2}\left(\alpha_{3}\mp\alpha_{2}\right)^{2}}}\end{array}
\]
one gets the differential equation describing \textbf{pss} given in
(II) and associated $1$-forms, with $\eta=0$, $\alpha=\alpha_{3}$ and
$\beta=\alpha_{2}$.

The converse is a straightforward computation.


\subsubsection{Case $\eta\protect\neq0$}

By Lemma \ref{lema1*} we need to determine the functions $f_{ij}$
that satisfy equations (\ref{fij*})-(\ref{7.5.1*}). First, in view
of $\mu_{3}^{2}-\delta\mu_{2}^{2}=0$, we imediately have $\delta=1$
and $\mu_{3}=\pm\mu_{2}$. Then condition $\mu_{3}^{2}+\mu_{2}^{2}\neq0$
leads to $\mu_{2}\neq0$, and in view of \eqref{first.f22.f32*} and
\eqref{sec.f22.f32*}, one gets the following two conditions 
\begin{eqnarray}
 &  & f_{32}=\pm f_{22}-\dfrac{\varphi}{\mu_{2}},\label{4.23*}\\
 &  & (\alpha_{3}\mp\alpha_{2})\,f_{22}={\displaystyle \sum_{i=0}^{1}z_{i+1}\,f_{12,z_{i}}-z_{xx}\,\varphi-\dfrac{\alpha_{2}}{\mu_{2}}\,\varphi.\label{4.24*}}
\end{eqnarray}
Then, by substituting \eqref{4.23*} in \eqref{last.eq.f22.f32*}
one gets 
\begin{equation}
\mp\,\eta\,\varphi={\displaystyle \sum_{i=0}^{1}z_{i+1}\,\varphi_{,z_{i}}\pm\,\mu_{2}(\alpha_{3}\mp\alpha_{2})\,f_{12},}\label{4.25*}
\end{equation}

\noindent where, as shown above, $\varphi$ and $f_{12}$ are functions of $z$
and $z_{1}$. Thus, the derivative of \eqref{4.25*} with respect
to $z_{2}$ leads to $\varphi_{,z_{1}}=0$ and replacing back to \eqref{4.25*}
\begin{eqnarray}
\mp\,\eta\,h=z_{1}\,h_{,z}\pm\,\mu_{2}\,(\alpha_{3}\mp\alpha_{2})\,f_{12},\label{4.26*}
\end{eqnarray}
where $\varphi=h(z)$ is a differentiable function.

From now on our analysis will be divided into two cases:\textbf{ (I)}
$\alpha_{3}\mp\alpha_{2}=0$;\textbf{ (II)} $\alpha_{3}\mp\alpha_{2}\neq0$.

\textbf{Case (I) $\boxed{\alpha_{3}\mp\alpha_{2}=0}$.} In this case
\eqref{4.26*} leads to $h=0$ and \eqref{4.24*} to $f_{12}=m$ $\in$
$\mathbb{R}$. Then, by replacing \eqref{4.23*} and $f_{22}:=\mu_{2}\,\psi$,
with $\psi=\psi(z,z_{1},z_{2})$ being a differentiable function,
in \eqref{A3D*} and \eqref{B3D*}, one obtains the partial differential
equation 
\[
z_{2t}=D_{x}\psi\mp\eta\,\psi\pm\,m\left(\,z_{2}+\frac{\alpha_{2}}{\mu_{2}}\right),
\]
together with the corresponding functions 
\[
\begin{array}{ll}
 & f_{11}=\eta,\vspace*{0.4cm}\\
 & f_{21}=\mu_{2}\,z_{2}+{\displaystyle \alpha_{2}},\vspace*{0.4cm}\\
 & f_{31}=\pm\,\mu_{2}\,z_{2}\pm\alpha_{2},
\end{array}\hspace*{1cm}\begin{array}{ll}
f_{12}=m,\vspace*{0.4cm}\\
f_{22}=\mu_{2}\,\psi,\vspace*{0.4cm}\\
f_{32}=\pm\,\mu_{2}\,\psi.
\end{array}
\]
where $\eta\,\psi-m\,(\mu_{2}\,z_{2}+\alpha_{2})\neq0$ due to \eqref{7.5.1*}.

Then, by performing a transformation of the form 
\[
x\mapsto\mp\,\eta\,x,\qquad t\mapsto\pm\,m\,t,\;z\mapsto\mp{\displaystyle \eta\,\mu_{2}}\,z,\qquad\psi\mapsto\pm\left({\displaystyle \frac{\mu_{2}}{m}}\,\psi-\frac{\alpha_{2}}{\eta}\right),
\]
with $m\neq0$, and further rearranging arbitrary functions, we
get the differential equation describing \textbf{pss} given by (I)
and associated 1-forms, with $\eta\neq0$, $a=1$ and $b=1$.

Analogously, when $m=0$, by performing a transformation of the form
\[
x\mapsto\mp\,\eta\,x,\qquad z\mapsto\mp{\displaystyle \eta\,\mu_{2}}\,z,\qquad\psi\mapsto{\displaystyle \mu_{2}}\,\psi,
\]
one gets the differential equation describing \textbf{pss} given by
(I) and associated 1-forms, with $\eta\neq0$, $a=1$ and $b=0$.

\medskip{}

\textbf{Case (II) $\boxed{\alpha_{3}\mp\alpha_{2}\neq0}$.} In this
case, in view of \eqref{4.26*}, \eqref{4.24*}, \eqref{4.23*}, \eqref{A3D*}
and \eqref{B3D*} one gets the differential equation

\begin{eqnarray*}
z_{2t}=-\dfrac{h}{\mu_{2}(\alpha_{3}\mp\alpha_{2})}z_{3}\mp\,D_{x}^{3}\left(\dfrac{h}{\mu_{2}^{2}(\alpha_{3}\mp\alpha_{2})^{2}}\right)-2\left((\alpha_{3}\mp\alpha_{2})\mu_{2}\,z_{2}\pm\dfrac{\alpha_{3}^{2}-\alpha_{2}^{2}}{2}\mp\dfrac{\eta^{2}}{2}\right)D_{x}\left(\dfrac{h}{\mu_{2}^{2}(\alpha_{3}\mp\alpha_{2})^{2}}\right),
\end{eqnarray*}
together with the corresponding functions 
\[
\begin{array}{ll}
 & f_{11}=\eta,\vspace*{0.3cm}\\
 & f_{21}=\mu_{2}\,z_{2}+\alpha_{2},\vspace*{0.4cm}\\
 & f_{31}=\pm\mu_{2}\,z_{2}+\alpha_{3},\vspace*{0.4cm}\\
 & f_{12}=\mp\dfrac{h'}{\mu_{2}(\alpha_{3}\mp\alpha_{2})}\,z_{1}-\dfrac{\eta\,h}{\mu_{2}(\alpha_{3}\mp\alpha_{2})},\vspace*{0.4cm}\\
 & f_{22}=\dfrac{1}{(\alpha_{3}\mp\alpha_{2})}\left(\mp\dfrac{h''}{\mu_{2}(\alpha_{3}\mp\alpha_{2})}\,z_{1}^{2}\mp\dfrac{h'}{\mu_{2}(\alpha_{3}\mp\alpha_{2})}\,z_{2}-\dfrac{\eta\,h'}{\mu_{2}(\alpha_{3}\mp\alpha_{2})}\,z_{1}-z_{2}\,h-\dfrac{\alpha_{2}}{\mu_{2}}\,h\right),\vspace*{0.4cm}\\
 & f_{32}=\pm\dfrac{1}{(\alpha_{3}\mp\alpha_{2})}\left(\mp\dfrac{h''}{\mu_{2}(\alpha_{3}\mp\alpha_{2})}\,z_{1}^{2}\mp\dfrac{h'}{\mu_{2}(\alpha_{3}\mp\alpha_{2})}\,z_{2}-\dfrac{\eta\,h'}{\mu_{2}(\alpha_{3}\mp\alpha_{2})}\,z_{1}-z_{2}\,h\mp\dfrac{\alpha_{3}}{\mu_{2}}\,h\right),
\end{array}\hspace*{1cm}
\]
with $h'\neq0$ due to \eqref{7.5.1*}.

Then, by performing a transformation of the form 
\[
\begin{array}{l}
t\mapsto\left(\alpha_{3}\mp\alpha_{2}\right)\mu_{2}\,t,\qquad z\mapsto\left(\alpha_{3}\mp\alpha_{2}\right)\mu_{2}\,z,\qquad h\mapsto{\displaystyle \frac{h}{\mu_{2}^{2}\left(\alpha_{3}\mp\alpha_{2}\right)^{2}}}\end{array}
\]
one gets the differential equation describing \textbf{pss} given by
(II) and associated $1$-forms, with $\alpha=\alpha_{3}$ and $\beta=\alpha_{2}$.

The converse is a straightforward computation.


\subsection{Proof of Theorem \ref{teo.clas.3.3*}}

Following the notations used in Lemma \ref{lema1*}, we will consider
here $\left(f_{31,z_{2}}\right)^{2}-\delta\left(f_{21,z_{2}}\right)^{2}=\mu_{3}^{2}-\delta\mu_{2}^{2}\neq0$,
with $\eta=0$. Thus we need to determine the functions $f_{ij}$
that satisfy (\ref{fij*})-(\ref{7.5.1*}). First, from equation \eqref{last.eq.f22.f32*}
one gets the following two equations 
\begin{eqnarray}
f_{12}=-\dfrac{1}{\gamma}\,\varphi_{,z_{1}},\label{4.17*}
\end{eqnarray}

\begin{eqnarray}
m_{1}\,\varphi_{,z_{1}}-\gamma\,z_{1}\,\varphi_{,z}=0,\label{4.18*}
\end{eqnarray}
where 
\begin{equation}
\gamma:=\mu_{3}^{2}-\delta\,\mu_{2}^{2},\qquad m_{1}:=\mu_{3}\alpha_{3}-\delta\mu_{2}\alpha_{2}.\label{gamma_m1_thm3.2}
\end{equation}

One can distinguish two cases:\textbf{ (I)} $-\mu_{3}\alpha_{2}+\mu_{2}\alpha_{3}=0$;\textbf{
(II)} $-\mu_{3}\alpha_{2}+\mu_{2}\alpha_{3}\neq0$.

\textbf{Case (I) $\boxed{-\mu_{3}\alpha_{2}+\mu_{2}\alpha_{3}=0}$.}
In this case $\mu_{2}\neq0$, otherwise by $\mu_{2}^{2}+\mu_{3}^{2}\neq0$
one would get $\alpha_{2}=0$ which contradicts \eqref{7.5.1*}. Thus
$\alpha_{3}=\mu_{3}\alpha_{2}/\mu_{2}$ and by \eqref{first.f22.f32*}
and \eqref{sec.f22.f32*} one gets 
\begin{eqnarray}
\dfrac{\alpha_{2}}{\mu_{2}}\varphi=\dfrac{\alpha_{2}}{\mu_{2}}\left(\mu_{3}f_{22}-\mu_{2}f_{32}\right)=\alpha_{3}\,f_{22}-\alpha_{2}\,f_{32} & = & {\displaystyle \sum_{i=0}^{1}z_{i+1}f_{12,z_{i}}-z_{2}\,\varphi.}\label{eq_aux}
\end{eqnarray}
Hence, by differentiating \eqref{eq_aux} with respect to $z_{2}$,
one has 
\begin{eqnarray}
f_{12,z_{1}}-\varphi=0,\qquad f_{12,z}-\dfrac{\alpha_{2}}{\mu_{2}}\dfrac{\varphi}{z_{1}}=0\label{4.19*}
\end{eqnarray}
and the compatibility condition $f_{12,z\,z_{1}}=f_{12,z_{1}\,z}$
reads 
\[
z_{1}\,\left[\frac{\alpha_{2}}{\mu_{2}}\,\varphi_{,z_{1}}-z_{1}\,\varphi_{,z}\right]-\frac{\alpha_{2}}{\sigma_{2}}\,\varphi=0.
\]
On the other hand $\alpha_{3}=\mu_{3}\alpha_{2}/\mu_{2}$ entails
that $m_{1}=\alpha_{2}\,\gamma/\mu_{2}$, and \eqref{4.18*} leads
to 
\begin{eqnarray}
\frac{\alpha_{2}}{\mu_{2}}\,\varphi_{,z_{1}}-z_{1}\,\varphi_{,z}=0\label{4.20*}
\end{eqnarray}
and $\alpha_{2}\,\varphi=0$. Now, if $\varphi=0$ then $f_{12}=0$
due to \eqref{4.17*}, which contradicts \eqref{7.5.1*}. It follows
that $\alpha_{2}=0$, hence $\alpha_{3}=\mu_{3}\alpha_{2}/\mu_{2}=0$
and \eqref{4.20*} entails that $\varphi=\phi(z_{1})$. On the other
hand, due to \eqref{4.17*} and \eqref{4.19*}, $\phi$ satisfy $\phi_{,z_{1}z_{1}}+\gamma\,\phi=0$
and so can be written as 
\begin{equation}
\phi=\left\{ \begin{array}{lc}
a\,\cos\left(\sqrt{\gamma}\,z_{1}\right)+b\,\sin\left(\sqrt{\gamma}\,z_{1}\right),\vspace{5pt}\qquad & \gamma>0,\\
a\,\cosh\left(\sqrt{-\gamma}\,z_{1}\right)+b\,\sinh\left(\sqrt{-\gamma}\,z_{1}\right),\qquad & \gamma<0,
\end{array}\right.\label{eq:cosh_cos_etc}
\end{equation}
with $a,b\in\mathbb{R}$, $a^{2}+b^{2}\neq0$. In particular, for
\eqref{gamma_m1_thm3.2}, $\gamma<0$ implies $\delta=1$.

Then, by replacing \eqref{4.17*}, \eqref{first.f22.f32*} and $f_{22}=\mu_{2}\,\psi$,
with $\psi=\psi(z,z_{1},z_{2})$ a differentiable function, in \eqref{A3D*}
and \eqref{B3D*}, one gets the partial differential equation 
\begin{eqnarray}
z_{2t}=D_{x}\psi-\gamma\,\mu_{2}\,\mu_{3}\,\phi'\,z_{2},
\end{eqnarray}
with 
\[
\begin{array}{ll}
 & f_{11}=0,\vspace*{0.4cm}\\
 & f_{21}=\pm\sqrt{\delta\left(\mu_{3}^{2}-\gamma\right)}\,z_{2},\vspace*{0.4cm}\\
 & f_{31}=\mu_{3}\,z_{2},
\end{array}\hspace*{1cm}\begin{array}{ll}
 & f_{12}=\mp\sqrt{\delta\left(\mu_{3}^{2}-\gamma\right)}\,\mu_{2}\,\gamma\,\phi',\vspace*{0.4cm}\\
 & f_{22}=\pm\sqrt{\delta\left(\mu_{3}^{2}-\gamma\right)}\,\psi,\vspace*{0.4cm}\\
 & f_{32}=\mu_{3}\,\psi-\gamma^{2}\mu_{2}\,\phi,
\end{array}
\]
and $\phi(z_{1})$ given by \eqref{eq:cosh_cos_etc}.

Finally, by performing a transformation of the form 
\[
\begin{array}{l}
t\mapsto\mu_{2}\,\gamma\,t,\qquad z\mapsto\pm\sqrt{\delta\left(\mu_{3}^{2}-\gamma\right)}\,z,\qquad\psi\mapsto\pm{\displaystyle \frac{\sqrt{\delta\left(\mu_{3}^{2}-\gamma\right)}}{\mu_{2}\gamma}\,\left(\psi-\mu_{2}\mu_{3}\gamma\,\phi\right),\qquad\phi\mapsto\delta\left(\mu_{3}^{2}-\gamma\right)\phi}\end{array},
\]
then passing to $r:=\mu_{3}/\sqrt{\delta\left(\mu_{3}^{2}-\gamma\right)}$
and rearranging the constants $a,b$ in the expression of $\phi$
provided by \eqref{eq:cosh_cos_etc}, one gets the differential equation
describing \textbf{pss} and \textbf{ss} given by (I), together with
the associated $1$-forms.

\medskip{}

\textbf{Case (II) $\boxed{-\mu_{3}\alpha_{2}+\mu_{2}\alpha_{3}\neq0}$.}
In this case \eqref{first.f22.f32*} and \eqref{sec.f22.f32*} can
be solved with respect to $f_{22}$ and $f_{32}$ as 
\begin{eqnarray}
 &  & f_{22}=\frac{1}{m_{2}}\left[-\alpha_{2}\,\varphi+\mu_{2}\,{\displaystyle \sum_{i=0}^{1}z_{i+1}f_{12,z_{i}}-\mu_{2}\,z_{2}\,\varphi}\right],\label{4.21*}\\
 &  & f_{32}=\frac{1}{m_{2}}\left[-\alpha_{3}\,\varphi+\mu_{3}\,{\displaystyle \sum_{i=0}^{1}z_{i+1}f_{12,z_{i}}-\mu_{3}\,z_{2}\,\varphi}\right],\label{4.22*}
\end{eqnarray}
where $f_{12}$ is given by \eqref{4.17*}, $\varphi$ satisfies \eqref{4.18*}
and 
\begin{equation}
m_{2}:=\mu_{2}\alpha_{3}-\mu_{3}\alpha_{2}.\label{eq:m2_th3.5}
\end{equation}

Hence, by substituting \eqref{4.21*} and \eqref{4.22*} in \eqref{A3D*}
and \eqref{B3D*}, and using \eqref{4.17*} and \eqref{4.18*}, we
get the partial differential equation 
\begin{center}
\begin{eqnarray}
z_{2t}=-D_{x}^{2}\left(\dfrac{\varphi_{,z_{1}}}{\gamma}\right)-D_{x}\left[\left(z_{2}+\dfrac{m_{1}}{\gamma}\right)\,\varphi\right]+\frac{\delta\,m_{2}^{2}}{\gamma^{2}}\,\varphi_{,z_{1}},\label{eq..iiiii3*}
\end{eqnarray}
\par\end{center}

\vspace*{0.3cm}
\noindent where $m_{2}\gamma\neq0$ and, in order to simplify the obtained
partial differential equation, $\varphi$ was rearranged as $\varphi\mapsto\varphi/m_{2}$.
Accordingly, in view of \eqref{4.17*} and \eqref{4.21*}-\eqref{4.22*}
one gets the corresponding functions \\

\noindent 
\[
\begin{array}{ll}
 & f_{11}=0,\vspace*{0.3cm}\\
 & f_{21}=\mu\,z_{2}+\dfrac{\mu\,m_{1}\mp\,m_{2}\,\sqrt{\gamma+\delta\mu^{2}}}{\gamma},\qquad\gamma+\delta\mu^{2}\geq0,\qquad\mu\in\mathbb{R},\vspace*{0.3cm}\\
 & f_{31}=\pm\sqrt{\gamma+\delta\mu^{2}}\,\,z_{2}-\dfrac{\delta\mu\,m_{2}\mp\,m_{1}\sqrt{\gamma+\delta\mu^{2}}}{\gamma},\vspace*{0.3cm}\\
 & f_{12}=-\dfrac{m_{2}}{\gamma}\,\varphi_{,z_{1}}\,,\vspace*{0.3cm}\\
 & f_{22}=-\mu\,D_{x}\left(\dfrac{\varphi_{,z_{1}}}{\gamma}\right)-\mu\,z_{2}\,\varphi-\dfrac{\mu\,m_{1}\mp\,m_{2}\,\sqrt{\gamma+\delta\mu^{2}}}{\gamma}\,\varphi\,,\vspace*{0.3cm}\\
 & f_{32}=\mp\,\sqrt{\gamma+\delta\mu^{2}}\,\,D_{x}\left(\dfrac{\varphi_{,z_{1}}}{\gamma}\right)\mp\,\sqrt{\gamma+\delta\mu^{2}}\,\,z_{2}\,\varphi+\dfrac{\delta\mu\,m_{2}\mp\,m_{1}\sqrt{\gamma+\delta\mu^{2}}}{\gamma}\,\varphi\,,
\end{array}\hspace*{1cm}
\]

\vspace*{0.3cm}
\noindent where $\mu:=\mu_{2}$, $\gamma+\delta\,\mu^{2}\geq0$, $\varphi=\varphi(z,z_{1})$
is a differentiable function satisfying $\varphi_{,z}\neq0$ and \eqref{4.18*};
moreover we took account of \eqref{gamma_m1_thm3.2} and \eqref{eq:m2_th3.5}
by setting $\mu_{3}=\pm\,\sqrt{\gamma+\delta\,\mu^{2}}$, $\alpha_{2}=(\mu\,m_{1}-\mu_{3}\,m_{2})/\gamma$
and $\alpha_{3}=(\mu_{3}\,m_{1}-\delta\,\mu\,m_{2})/\gamma$. Now,
by performing the transformation $z\mapsto\gamma z$, one can see
that \eqref{4.18*} reduces to $m_{1}\varphi_{,z_{1}}-z_{1}\varphi_{,z}=0$.
Hence, without loss of generality one can take $\varphi=\varphi(z_{1}^{2}+2\,m_{1}\,z)$
and, by introducing the constants $r:=m_{2}$ and $m:=m_{1}$, one finally
gets the partial differential equation describing \textbf{pss} or
\textbf{ss} given in (II), together with the associated $1$-forms.

The converse is a straightforward computation.


\subsection{Proof of Theorem \ref{teo.clas.3.5*}}

Following the notations used in Lemma \ref{lema1*}, we will consider
here $\left(f_{31,z_{2}}\right)^{2}-\delta\left(f_{21,z_{2}}\right)^{2}=\mu_{3}^{2}-\delta\mu_{2}^{2}\neq0$,
with $\eta\neq0$. Thus we need to determine the functions $f_{ij}$
that satisfy (\ref{fij*})-(\ref{7.5.1*}). First, from \eqref{first.f22.f32*}
and \eqref{last.eq.f22.f32*} one gets 
\begin{eqnarray}
 &  & f_{22}=\dfrac{\mu_{3}}{\gamma}\,\varphi+\dfrac{\mu_{2}}{\eta\gamma}{\displaystyle \sum_{i=0}^{1}z_{i+1}\,\varphi_{,z_{i}}+\dfrac{\mu_{2}}{\eta\gamma}(\gamma\,z_{2}+m_{1})\,f_{12}\,,\label{4.27*}}\\
 &  & f_{32}=\dfrac{\delta\mu_{2}}{\gamma}\,\varphi+\dfrac{\mu_{3}}{\eta\gamma}{\displaystyle \sum_{i=0}^{1}z_{i+1}\,\varphi_{,z_{i}}+\dfrac{\mu_{3}}{\eta\gamma}(\gamma\,z_{2}+m_{1})\,f_{12}\,,\label{4.28*}}
\end{eqnarray}
where $\varphi^{2}+f_{12}^{2}\neq0$, in view of \eqref{7.5.1*},
and $\gamma$, $m_{1}$ are the constants defined in \eqref{gamma_m1_thm3.2}.

Then, by substituting \eqref{4.27*} and \eqref{4.28*} in \eqref{sec.f22.f32*},
and differentiating with respect to $z_{2}$ one get the following
two equations 
\begin{eqnarray}
\ell_{1,z_{1}}-\,\gamma\,\ell_{2}=0,\label{4.29*}\\
\nonumber \\
z_{1}\,\ell_{1,z}-m_{1}\,\ell_{2}=0,\label{4.30*}
\end{eqnarray}
where 
\begin{equation}
\ell_{1}:=\eta\,\gamma\,f_{12}-m_{2}\varphi,\qquad\ell_{2}:=m_{2}\,f_{12}+\eta\,\varphi\label{eq:l1-l2-thm3.6*}
\end{equation}
and $m_{2}$ is the constant defined in \eqref{eq:m2_th3.5}.

Now, in view of $\gamma\neq0$, by \eqref{4.29*} one has $\ell_{2}=\ell_{1,z_{1}}/\gamma$
and \eqref{4.30*} leads to 
\begin{eqnarray}
m_{1}\,\ell_{1,z_{1}}-\gamma\,z_{1}\,\ell_{1,z}=0.\label{4.31*}
\end{eqnarray}
On the other hand, by using the definition of $\ell_{2}$ and \eqref{4.29*},
one obtains 
\begin{eqnarray}
\varphi=-\dfrac{m_{2}}{\eta}\,f_{12}+\dfrac{\ell_{1,z_{1}}}{\eta\,\gamma},\label{4.32*}
\end{eqnarray}
which replaced in the definition of $\ell_{1}$ leads to 
\begin{eqnarray}
(\gamma\,\eta^{2}+m_{2}^{2})\,f_{12}=\eta\,\ell_{1}+\frac{m_{2}}{\gamma}\,\ell_{1,z_{1}}.\label{4.33*}
\end{eqnarray}
From now on we will distinguish the cases:\textbf{ (I)} $\gamma\,\eta^{2}+m_{2}^{2}=0$;\textbf{
(II)} $\gamma\,\eta^{2}+m_{2}^{2}\neq0$.

\textbf{Case (I) $\boxed{\gamma\,\eta^{2}+m_{2}^{2}=0}$. }First we
notice that in this case $m_{2}\neq0$ and by \eqref{4.33*} 
\[
\ell_{1,z_{1}}+\dfrac{\eta\,\gamma}{m_{2}}\ell_{1}=0,
\]
that is $\ell_{1}(z,z_{1})=r\,e^{-\frac{\eta\gamma}{m_{2}}z_{1}}$,
with $m_{1}=0$ and $r\in\mathbb{R}$, due to \eqref{4.31*}. Now,
by \eqref{4.32*} 
\begin{eqnarray}
f_{12}=-\dfrac{\eta}{m_{2}}\varphi-\dfrac{r\,\eta}{m_{2}^{2}}\,e^{-\frac{\eta\gamma}{m_{2}}z_{1}}\,,\label{f12.*}
\end{eqnarray}
and in view of \eqref{4.27*}-\eqref{4.28*} one obtains $f_{22}$
and $f_{32}$ in terms of $\varphi$. Then, in view of \eqref{fij*},
\eqref{A3D*}, \eqref{B3D*}, \eqref{4.27*}, \eqref{4.28*} and \eqref{f12.*},
one obtains the following partial differential equation

\begin{eqnarray}
z_{2t}=\dfrac{1}{\eta\,\gamma}\,D_{x}^{2}\left(\varphi\right)-D_{x}\left(\dfrac{\varphi}{m_{2}}\,z_{2}\right)-\dfrac{r}{m_{2}^{2}}\,D_{x}(z_{2}\,e^{-\frac{\eta\,\gamma}{m_{2}}\,z_{1}})+\dfrac{r\,\eta}{m_{2}\,\gamma}\,e^{-\frac{\eta\,\gamma}{m_{2}}\,z_{1}},\label{eq55.t3*}
\end{eqnarray}
together with the corresponding functions 
\[
\begin{array}{ll}
 & f_{11}=\eta,\vspace*{0.4cm}\\
 & f_{21}=\mu\,z_{2}\mp\,\dfrac{m_{2}\,\sqrt{\gamma+\mu^{2}}}{\gamma},\qquad\gamma+\mu^{2}\geq0,\qquad\mu\in\mathbb{R}\setminus\{0\},\vspace*{0.4cm}\\
 & f_{31}=\pm\sqrt{\gamma+\mu^{2}}\,\,z_{2}-\dfrac{\mu\,m_{2}}{\gamma},\vspace*{0.4cm}\\
 & f_{12}=-\dfrac{\eta}{m_{2}}\varphi-\dfrac{r\,\eta}{m_{2}^{2}}\,e^{-\frac{\eta\gamma}{m_{2}}z_{1}},\vspace*{0.4cm}\\
 & f_{22}=\pm\dfrac{\sqrt{\gamma+\mu^{2}}}{\gamma}\,\varphi+\dfrac{\mu}{\eta\gamma}\,D_{x}(\varphi)+\dfrac{\mu}{\eta}\,z_{2}\,f_{12}\,,\vspace*{0.4cm}\\
 & f_{32}=\dfrac{\mu}{\gamma}\,\varphi\pm\dfrac{\sqrt{\gamma+\mu^{2}}}{\eta\gamma}\,D_{x}(\varphi)\pm\dfrac{\sqrt{\gamma+\mu^{2}}}{\eta}\,z_{2}\,f_{12}\,,
\end{array}\hspace*{1cm}
\]

\vspace*{0.3cm}
\noindent where $\mu:=\mu_{2}$, $m_{2}\,\gamma\neq0$, $\eta^{2}\,\gamma+m_{2}^{2}=0$
and $\varphi=\varphi(z,z_{1})$ is a differentiable function satisfying
$\mu(\varphi_{,z}\,z_{1}+\varphi_{,z_{1}}\,z_{2})\neq0$, due to \eqref{7.5.1*}.
In particular, we set $\mu_{3}=\pm\sqrt{\gamma+\mu^{2}}$, $\alpha_{2}=-m_{2}\mu_{3}/\gamma$
and $\alpha_{3}=-m_{2}\mu_{2}/\gamma$, in view of \eqref{gamma_m1_thm3.2}
and \eqref{eq:m2_th3.5}. 
 
Now, by performing a transformation 

\[
z\mapsto\frac{\eta\gamma}{m_{2}}z,\qquad\varphi\mapsto\frac{1}{m_{2}}\left(\varphi+\frac{r}{m_{2}}e^{-\frac{\eta\gamma}{m_{2}}z_{1}}\right),
\]
rearranging $r$ according to $r\mapsto\left(\eta^{2}/m_{2}^{2}\right)r$
and introducing $\rho:=\eta\mu/m_{2}$, one gets the partial differential
equation describing \textbf{pss} given in (I), together with the associated
$1$-forms.
\par

\textbf{Case (II) $\boxed{\gamma\,\eta^{2}+m_{2}^{2}\neq0}$.} From
\eqref{4.33*} and \eqref{4.32*} one obtains $f_{12}$ and $\varphi$
in tems of $\ell_{1}$ and $\ell_{1,z_{1}}$, where $\ell_{1}$ is
solution of \eqref{4.31*}. Then, by substituting \eqref{4.27*} and
\eqref{4.28*} in \eqref{A3D*} and \eqref{B3D*}, one has the following
partial differential equation 
\begin{eqnarray}
z_{2t}=\dfrac{1}{\eta\,\gamma}D_{x}^{2}(\varphi)+\frac{1}{\eta\,\gamma}D_{x}\left[\phi\,(\gamma\,z_{2}+m_{1})\right]-\dfrac{\delta\,m_{2}}{\gamma}\,\phi-\dfrac{\delta\,\eta\,\varphi}{\gamma},\label{eq56_prof*}
\end{eqnarray}
\noindent where $\varphi=\varphi(z,z_{1})$ and $\phi=\phi(z,z_{1})$ are 
\begin{eqnarray*}
 &  & \varphi=\dfrac{\eta\,\ell_{,z_{1}}}{\gamma\,\eta^{2}+m_{2}^{2}}-\dfrac{m_{2}\,\ell}{\gamma\,\eta^{2}+m_{2}^{2}}\,,\\
\\
 &  & \phi=\dfrac{m_{2}\,\ell_{,z_{1}}}{\gamma\,(\gamma\,\eta^{2}+m_{2}^{2})}+\dfrac{\eta\,\ell}{\gamma\,\eta^{2}+m_{2}^{2}}\,,\\
\end{eqnarray*}
and $\gamma(\gamma\,\eta^{2}+m_{2}^{2})\neq0$. Also, from \eqref{fij*},
\eqref{4.27*}, \eqref{4.28*}, \eqref{4.31*}, \eqref{4.32*} and
\eqref{4.33*}, one gets the corresponding functions

\[
\begin{array}{ll}
 & f_{11}=\eta,\vspace*{0.3cm}\\
 & f_{21}=\mu\,z_{2}+\dfrac{\mu\,m_{1}\mp\,m_{2}\,\sqrt{\gamma+\delta\mu^{2}}}{\gamma},\qquad\gamma+\delta\mu^{2}\geq0,\qquad\mu\in\mathbb{R},\vspace*{0.4cm}\\
 & f_{31}=\pm\sqrt{\gamma+\delta\mu^{2}}\,z_{2}-\dfrac{\delta\mu\,m_{2}\mp\,m_{1}\sqrt{\gamma+\delta\mu^{2}}}{\gamma},\vspace*{0.4cm}\\
 & f_{12}=\dfrac{m_{2}\,\ell_{,z_{1}}}{\gamma\,(\gamma\,\eta^{2}+m_{2}^{2})}+\dfrac{\eta\,\ell}{\gamma\,\eta^{2}+m_{2}^{2}}\,,\vspace*{0.4cm}\\
 & f_{22}=\pm\dfrac{\sqrt{\gamma+\delta\mu^{2}}}{\gamma}\,\varphi+\dfrac{\mu}{\eta\gamma}D_{x}(\varphi)+\dfrac{\mu}{\eta\gamma}\left(\gamma\,z_{2}+m_{1}\right)\,\phi\,,\vspace*{0.4cm}\\
 & f_{32}=\dfrac{\delta\mu}{\gamma}\,\varphi\pm\dfrac{\sqrt{\gamma+\delta\mu^{2}}}{\eta\gamma}D_{x}(\varphi)\pm\dfrac{\sqrt{\gamma+\delta\mu^{2}}}{\eta\gamma}\left(\gamma\,z_{2}+m_{1}\right)\,\phi\,,
\end{array}\hspace*{1cm}
\]

\vspace*{0.3cm}
\noindent where $\mu:=\mu_{2}$, $\gamma+\delta\,\mu^{2}\geq0$, $\ell:=\ell_{1}$
is a nonzero differentiable function satisfying \eqref{4.31*}. In
particular, we took account of \eqref{gamma_m1_thm3.2} and \eqref{eq:m2_th3.5}
by setting $\mu_{3}=\pm\,\sqrt{\gamma+\delta\,\sigma^{2}}$, $\alpha_{2}=(\mu\,m_{1}-\mu_{3}\,m_{2})/\gamma$
and $\alpha_{3}=(\mu_{3}\,m_{1}-\delta\,\mu\,m_{2})/\gamma$. In addition,
$\eta\,f_{22}-\phi\,f_{21}\neq0$ due to nondegeneracy condition \eqref{7.5.1*}.

 Now, by performing a transformation $z\mapsto\gamma z$,
equation \eqref{4.31*} reduces to $m_{1}\ell_{,z_{1}}-z_{1}\ell_{,z}=0$.
In particular, under such a transformation $\ell'\mapsto\ell'/\gamma^{2}$.
Hence without loss of generality one can take $\ell=\ell(z_{1}^{2}+2\,m_{1}\,z)$
and, by introducing the constants $r:=m_{2},m:=m_{1},\mu:=\mu_{2}/\gamma$ and $\mu_{3}=\sqrt{\gamma+\delta\,\mu^{2}}$,
one finally gets the partial differential equation describing \textbf{pss}
or \textbf{ss} given in (II), together with the associated $1$-forms.
In particular, $\ell'\neq0$ due to $\omega_{1}\wedge\omega_{2}\neq0$.

The converse is a straightforward computation.

\end{document}